\theoremstyle{plain}
\newtheorem{theorem}{Theorem}
\newtheorem{lemma}[theorem]{Lemma}
\newtheorem{proposition}[theorem]{Proposition}
\newtheorem{definition}[theorem]{Definition}
\newtheorem{corollary}[theorem]{Corollary}
\theoremstyle{remark}
\newtheorem*{remark}{Remark}
\title{\textsc{Ring epimorphisms from path algebras to matrix algebras}}
\author{Jakub Kopřiva}
\begin{document}

	\maketitle
	
	\begin{abstract}
	    \noindent In this text, we are concerned with ring epimorphisms, and more specifically universal localisations, from path algebras to matrix algebras. We are mainly focused on constructing ring epimorphisms and universal localisations by extending them from from smaller path algebras to larger path algebras. At first, we discuss some simple results, and then we present generalizations thereof in several directions.
	\end{abstract}
	
\noindent \textbf{Key words:} Ring epimorphism, universal localisation, path algebra, matrix algebra, free associative algebra\\
\noindent \textbf{MSC2010:} 16S10, 16S50, 16S85 (Primary) 16G20 (Secondary)
	
	\tableofcontents
	
	\section*{Introduction}
	This document is structured into two sections: in section 1, we review, and, in section 2, we set out to construct ring epimorphisms from path algebras to matrix algebras. Our starting point is formed by Proposition \ref{Proposition19}, which yields a ring epimorphism to matrix algebra $M_n(k)$ arising a from a brick, a module with trivial endomorphisms, and Theorem \ref{Theorem20}, which shows that a ring epimorphism to $M_n(k\langle x_1, \dots, x_n \rangle)$ can be obtained, informally speaking, by adding an additional edge to a brick. If, moreover, the brick has no self-extensions, we obtain universal localisations.
	
	Our goal in this text is to apply the ideas underpinning the basic results in broader contexts. Among other results, we generalize Proposition \ref{Proposition19} and Theorem \ref{Theorem20} in Proposition \ref{PIdealRelations} and Theorem \ref{TEpirmorphismInvariant}, respectively, for ring epimorphisms that are not necessarily universal localisations. Moreover, in Proposition \ref{PExtendingExceptional}, we present a generalization of Theorem \ref{Theorem20} in terms of universal localisations using discussion on extending universal localisations in general.
	
	\section{Ring epimorphisms and universal localisations}
	
	In this section, we give an overview of ring epimorphisms and related results; we also derive some useful corollaries of these results.
	
	We begin by recalling the functors between categories of modules arising from a homomorphism between rings (all rings in this text are assumed to be unital); then, we define ring epimorphisms. The concept of ring epimorphism is examined from both ring-theoretic and category-theoretic point of view. Finally, we focus on universal localisations, specific type of ring epimorphisms that can be viewed as a vast, but natural, generalization of the concept of localisation in a multiplicative set in commutative algebra.
	
	Among other sources, this section draws on nice reviews of the subject in subsection 2.1 \textit{Ring epimorphisms} in \cite{marks2015ring} and subsection 2.1 \textit{Localisations of rings} in \cite{hugel2018flat}.
	
	\subsection{Change of rings}
	In this subsection, we briefly recapitulate some notions regarding change of rings induced by a homomorphism between them.
	
	Given a ring homomorphism $f: A \to B$, there are several functors between $\mathsf{Mod-}A$ and $\mathsf{Mod-}B$:
	\begin{enumerate}[(i)]
		\item \textit{restriction of scalars} functor $\varphi^*: \mathsf{Mod-}B \to \mathsf{Mod-}A$ such that: $$f^*(N_B) = N_A$$
		
		\item \textit{induced module} functor $f_!: \mathsf{Mod-}A \to \mathsf{Mod-}B$ such that: $$f_!(M_A) = M_A \otimes_A {}_A B_B$$

		\item \textit{coinduced module} functor $f_*: \mathsf{Mod-}A \to \mathsf{Mod-}B$ such that: $$f_*(M_A) = \mathrm{Hom}_A({}_B B_A, M_A)$$  
	\end{enumerate}
	These functors form two pairs of adjoint functors:
 	$$f_!: \mathsf{Mod-}A \dashv \mathsf{Mod-}B : f^* \mbox{  and  } f^*: \mathsf{Mod-}B \dashv \mathsf{Mod-}A : f_*$$
 	In other words, for any $M \in \mathsf{Mod-}A$ and $N \in \mathsf{Mod-}B$, we have the following functorial isomorphisms:
 	$$\mathrm{Hom}_B(M_A \otimes_A {}_A B_B, N_B) \cong \mathrm{Hom}_A(M_A, N_A),$$
 	$$\mathrm{Hom}_A(N_A, M_A) \cong \mathrm{Hom}_B(N_B, \mathrm{Hom}_A({}_B B_A,M_A)).$$
 	For details, we refer the reader to section 10.4 \textit{Tensor Product of Modules} in \cite{dummit2004abstract} for instance.
 	
	\subsection{Ring epimorphisms}
	In this section, we try to give a rather representative overview of results on ring epimorphisms and to illustrate the variety approaches with which this concept have been treated in the literature thus far.
	
	\begin{definition}[Ring epimorphism]\label{Definition1}
	We say that a ring homomorphism $\varphi: R \to S$ is a ring epimorphism if for any two ring homomorphisms $\varrho_1, \varrho_2: S \to T$ with $\varrho_1 \varphi = \varrho_2 \varphi,$ also $\varrho_1 = \varrho_2.$   	
	\end{definition}
	
	\begin{definition}[Dominion]\label{Definition2}
	Let $A \subseteq B$ be rings. The dominion of $A$ in $B$ is the maximal subset $D$ of $B$ such that homomorphisms from $B$ that agree on $A$ must agree on $D$.
	\end{definition}
	
	\begin{remark}
	It is easy to see that dominion $D$ needs to be a subring of $B$. For more details, see section 1 \textit{Preliminaries and corrections} in \cite{isbell1969epimorphisms} or section \textit{Maximal epic subrings and dominions in simple artinian rings} in chapter 7 in \cite{schofield1985representations}.
	\end{remark}

	\begin{theorem}[Ring-theoretic characterization of ring epimorphisms, Theorem 1.1 in \cite{isbell1969epimorphisms} attributed to Silver and Mazet and \cite{stenstroem2012rings}]\label{Theorem3}
	For a ring homomorphism $f: A \to B$, the following statements are equivalent:
	\begin{enumerate}[(i)]
	\item $f$ is a ring epimorphism;
	\item The dominion of $\varphi(A)$ in $B$ is equal to $B$.
	\item $f \otimes_A B = B \otimes_A f: B \to B \otimes_A B$ is an isomorphism of $B-$$B-$bimodules;
	\item $\mathrm{Coker}(f) \otimes_A B = 0$.
	\end{enumerate}
	Moreover, the dominion of $\varphi(A)$ is a set of all $XPY \in B$ where $X$ is a $1 \times n$ matrix over $B$, $Y$ is $n \times 1$ matrix over $B$, and $P$ is $n \times n$ matrix over $\varphi(A)$ such that $XP$ is a $1 \times n$ matrix over $\varphi(A)$ and $PY$ a $n \times 1$ matrix over $\varphi(A)$.
	\end{theorem}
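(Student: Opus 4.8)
The plan is to establish the equivalences through the two canonical maps attached to the homomorphism $\varphi$, and then to read off the explicit description of the dominion by specialising those constructions to a single element. Write $A'=\varphi(A)\subseteq B$. Two ring homomorphisms $\varrho_1,\varrho_2\colon B\to T$ satisfy $\varrho_1\varphi=\varrho_2\varphi$ exactly when they agree on $A'$; hence $\varphi$ is a ring epimorphism if and only if the inclusion $A'\hookrightarrow B$ is one, the equivalence (i)$\Leftrightarrow$(ii) is then immediate from Definition~\ref{Definition2}, and since the $A$-module structures on the two copies of $B$ in (iii)--(iv) factor through $A\twoheadrightarrow A'$ we may replace $A$ by $A'$ throughout and regard $B\otimes_A B$ as $B\otimes_{A'}B$. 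Now consider the multiplication map $\mu\colon B\otimes_{A'}B\to B$ and the maps $\iota_1,\iota_2\colon B\to B\otimes_{A'}B$, $\iota_1(b)=b\otimes 1$, $\iota_2(b)=1\otimes b$; up to the canonical identifications of $B$ with $B\otimes_{A'}A'$ and with $A'\otimes_{A'}B$ these are exactly the maps $B\otimes_A f$ and $f\otimes_A B$ of statement (iii) (with $f=\varphi$). As $\mu\iota_1=\mathrm{id}_B=\mu\iota_2$, each $\iota_i$ is a split monomorphism; and if $\iota_1=\iota_2$ then every simple tensor satisfies $x\otimes y=x\cdot\iota_2(y)=x\cdot\iota_1(y)=xy\otimes 1$, so $\iota_1$ is also onto, hence an isomorphism with inverse $\mu$. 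Thus (iii) amounts to the bare equality $\iota_1=\iota_2$. Applying right-exactness of $-\otimes_{A'}B$ to $A'\hookrightarrow B\to\mathrm{Coker}(\varphi)\to 0$ identifies $\mathrm{Coker}(\varphi)\otimes_{A'}B$ with $(B\otimes_{A'}B)/\iota_2(B)$, so (iv) says $\iota_2$ is onto, which for a split monomorphism means $\iota_2$ (equivalently $\mu$) is an isomorphism, equivalently $\iota_1=\iota_2$; hence (iii)$\Leftrightarrow$(iv). It remains to connect this with (i).

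If $\iota_1=\iota_2$: given $\varrho_1,\varrho_2\colon B\to T$ agreeing on $A'$, the rule $x\otimes y\mapsto\varrho_1(x)\varrho_2(y)$ is biadditive and balanced over $A'$ (this is the only place the agreement on $A'$ enters), hence descends to $\psi\colon B\otimes_{A'}B\to T$, and then $\varrho_1(b)=\psi(b\otimes 1)=\psi(1\otimes b)=\varrho_2(b)$, so $\varphi$ is a ring epimorphism. Conversely, by contraposition: if $\iota_1\neq\iota_2$, fix $b$ with $d(b):=1\otimes b-b\otimes 1\neq 0$ in $B\otimes_{A'}B$. A one-line check shows $d\colon B\to B\otimes_{A'}B$ is a derivation for the natural $B$-bimodule structure, and $d$ vanishes on $A'$ because $a\otimes 1=1\otimes a$ for $a\in A'$. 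Then the triangular matrix ring $T=\left(\begin{smallmatrix}B&B\otimes_{A'}B\\0&B\end{smallmatrix}\right)$ admits the two unital ring homomorphisms $\varrho_1(x)=\left(\begin{smallmatrix}x&0\\0&x\end{smallmatrix}\right)$ and $\varrho_2(x)=\left(\begin{smallmatrix}x&d(x)\\0&x\end{smallmatrix}\right)$, the second being multiplicative precisely because $d$ is a derivation, and these agree on $A'$ but differ at $b$, so $\varphi$ is not a ring epimorphism. This proves (i)$\Leftrightarrow$(iii) and finishes the equivalences.

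For the ``moreover'' statement, rerun the two constructions above with a single fixed $b\in B$ in place of all of $B$: the map $\psi$ shows that $b\otimes 1=1\otimes b$ forces $\varrho_1(b)=\varrho_2(b)$ whenever $\varrho_1,\varrho_2$ agree on $A'$, so such a $b$ lies in the dominion, while the triangular-ring construction (which uses only the single value $d(b)$) shows that $b\otimes 1\neq 1\otimes b$ keeps $b$ out of the dominion. Hence the dominion of $A'$ in $B$ equals $\{\,b\in B:\ b\otimes 1=1\otimes b\ \text{in}\ B\otimes_{A'}B\,\}$, and one is left to match this set with the displayed set of products $XPY$. One inclusion is a short tensor computation: if $b=XPY$ with $XP$ and $PY$ over $A'$, then
\[
b\otimes 1=\sum_k X_k(PY)_k\otimes 1=\sum_{k,i}X_k\otimes P_{ki}Y_i=\sum_i 1\otimes(XP)_iY_i=1\otimes b,
\]
where the outer equalities are the definition of matrix multiplication and the inner two come from moving the $A'$-valued entries $(PY)_k$, $P_{ki}$, $(XP)_i$ across $\otimes$. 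For the reverse inclusion, apply the standard criterion for the vanishing of a finite sum of simple tensors to $b\otimes 1-1\otimes b=0$ (with $B$ viewed as a right, respectively left, $A'$-module); after a brief rewriting its conclusion produces elements $w_1,\dots,w_m\in B$ and $p_1,\dots,p_m\in A'$ with $\sum_j p_jw_j=1$, with $bp_j\in A'$ for every $j$, and with $\sum_j(bp_j)w_j=b$. Taking $X=(b,0,\dots,0)\in M_{1\times(m+1)}(B)$, $Y=(1,w_1,\dots,w_m)^{\mathsf T}\in M_{(m+1)\times 1}(B)$, and $P\in M_{(m+1)\times(m+1)}(A')$ the matrix whose only nonzero entries form its first row $(0,p_1,\dots,p_m)$, one has $XP=(0,bp_1,\dots,bp_m)$ and $PY=(1,0,\dots,0)^{\mathsf T}$ over $A'$ and $XPY=b$, which exhibits $b$ in the displayed form.

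The bookkeeping in the first two paragraphs --- the functor $-\otimes_{A'}B$, the split maps $\iota_i$, the derivation $d$ --- is routine; the step that needs genuine thought is this last inclusion, namely converting the data coming out of the tensor-vanishing criterion into the asserted matrix shape. The obvious attempt $P=I$ fails, because it would force $Y$, hence the $w_j$, to lie over $A'$; the resolution is that a single nonzero row in $P$ already suffices, since it absorbs the relation $\sum_j p_jw_j=1$ on the $PY$ side while the membership $bp_j\in A'$ takes care of the $XP$ side. (Alternatively, this inclusion is the ring-theoretic incarnation of Isbell's zigzag lemma and could be proved along those lines, but the tensor-product reformulation keeps the argument short.)
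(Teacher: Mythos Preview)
The paper does not supply its own proof of this statement; it is recorded as a known result with references to Isbell, Silver--Mazet, and Stenstr\"om, so there is no in-paper argument to compare against. Your proof is correct and self-contained. The equivalences are handled by the standard device of the multiplication map $\mu$ and its two sections $\iota_1,\iota_2$, with the triangular-ring construction (using the derivation $d(b)=1\otimes b-b\otimes 1$) supplying the contrapositive for (i)$\Rightarrow$(iii); this is exactly the classical Silver--Mazet argument. For the ``moreover'' clause you first identify the dominion with $\{b\in B:\ b\otimes 1=1\otimes b\}$ and then convert the output of the tensor-vanishing criterion into the explicit matrices $X$, $P$, $Y$; the verification that your choice of $P$ with a single nonzero row gives $XP$ and $PY$ over $A'$ and $XPY=b$ is correct. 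The only step a reader might want a pointer for is the ``standard criterion for the vanishing of a finite sum of simple tensors'' (this is the relations lemma for tensor products, e.g.\ in Bourbaki, \emph{Alg\`ebre}~II, \S3); once that is granted, nothing is missing.
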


	\begin{theorem}[Categorial characterization of ring epimorphisms, \cite{stenstroem2012rings}]\label{Theorem4}
	For a ring homomorphism $f: A \to B$, the following statements are equivalent:
	\begin{enumerate}[(i)]
	\item $f$ is a ring epimorphism;
	\item The restriction of scalars functor $f^*: \mathsf{Mod-}B \to \mathsf{Mod-}A$ (respectively the restriction of scalars from $B\mathsf{-Mod}$ to $A\mathsf{-Mod}$) is fully faithful.
	\end{enumerate}
	\end{theorem}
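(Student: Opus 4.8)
The plan is to prove the two implications separately, after first disposing of the faithful half of ``fully faithful'', which is automatic: $f^*$ is always faithful, since two $B$-linear maps that coincide as maps of underlying sets — equivalently, that become equal after applying $f^*$ — are equal. Hence the entire content of the statement is the equivalence of ``$f^*$ is full'' with ``$f$ is a ring epimorphism''. I would argue this directly, using Theorem \ref{Theorem3} only to rephrase the epimorphism condition as the identity $1\otimes b = b\otimes 1$ in $B\otimes_A B$ for every $b\in B$; this is exactly the content of the equality $f\otimes_A B = B\otimes_A f$ in part (iii), read through the canonical isomorphisms $A\otimes_A B\cong B\cong B\otimes_A A$. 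The left-module version is entirely symmetric, so I would treat only right modules.

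For ``$f$ a ring epimorphism $\Rightarrow$ $f^*$ full'', let $g\colon f^*(N)\to f^*(N')$ be $A$-linear and fix $n\in N$. The assignment $(x,y)\mapsto g(nx)y$ is biadditive and $A$-balanced — balancedness is the identity $g\big(n\,(xf(a))\big)\,y = \big(g(nx)\,f(a)\big)\,y = g(nx)\,(f(a)y)$, using only $A$-linearity of $g$ — so it induces $\Theta\colon B\otimes_A B\to N'$ with $\Theta(x\otimes y)=g(nx)y$. Since $b\otimes 1 = 1\otimes b$ in $B\otimes_A B$, we get $g(nb)=\Theta(b\otimes 1)=\Theta(1\otimes b)=g(n)b$. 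As $n\in N$ and $b\in B$ were arbitrary, $g$ is $B$-linear, proving fullness.

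For the converse, assume $f^*$ is full and let $\varrho_1,\varrho_2\colon B\to T$ be ring homomorphisms with $\varrho_1 f=\varrho_2 f$. Regard $T$ as a right $B$-module $T_i$ via $\varrho_i$, with action $t\cdot b := t\varrho_i(b)$. Because $\varrho_1 f=\varrho_2 f$, the right $A$-actions on $T_1$ and $T_2$ obtained by restriction along $f$ coincide, so $f^*(T_1)=f^*(T_2)$ as right $A$-modules and $\mathrm{id}_T\in\mathrm{Hom}_A\big(f^*(T_1),f^*(T_2)\big)$. Fullness yields $h\in\mathrm{Hom}_B(T_1,T_2)$ with $f^*(h)=\mathrm{id}_T$, which forces $h=\mathrm{id}_T$ as a map of sets; thus $\mathrm{id}_T\colon T_1\to T_2$ is $B$-linear, i.e. $t\varrho_1(b)=t\varrho_2(b)$ for all $t\in T$ and $b\in B$. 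Taking $t=1_T$ gives $\varrho_1=\varrho_2$, so $f$ is a ring epimorphism.

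I expect the first implication to be the real obstacle — not because it is computationally hard, but because it conceals a choice: one must find the right device (the map $\Theta$, or equivalently the counit of the adjunction $f_!\dashv f^*$) that upgrades the single isomorphism $B\otimes_A B\cong B$ to $B$-linearity of an \emph{arbitrary} $A$-linear map between \emph{arbitrary} $B$-modules. A more conceptual route is available: for an adjunction $L\dashv R$, the right adjoint $R$ is fully faithful precisely when the counit $LR\Rightarrow\mathrm{id}$ is invertible; applied to $f_!\dashv f^*$ this reduces the claim to the assertion that $f^*(N)\otimes_A B\to N$ is an isomorphism for all $N$, and since $f^*$ also has a right adjoint $f_*$ it preserves colimits, so both functors are right exact and commute with direct sums and it suffices to check $N=B$, which is Theorem \ref{Theorem3}(iii). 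I would present the elementary argument in the body and relegate this remark to a footnote.
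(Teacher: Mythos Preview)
Your proof is correct. Note, however, that the paper does not supply its own proof of this theorem: it is stated with attribution to Stenstr\"om and no argument follows, so there is no ``paper's proof'' against which to compare. Your elementary argument --- reducing fullness to the identity $b\otimes 1 = 1\otimes b$ in $B\otimes_A B$ via the balanced map $(x,y)\mapsto g(nx)y$, and deducing the epimorphism property from fullness by pulling back two ring maps to $B$-module structures on the same set --- is the standard one and is essentially what one finds in Stenstr\"om. The adjunction remark you propose for a footnote (fullness of $f^*$ $\Leftrightarrow$ invertibility of the counit of $f_!\dashv f^*$, then reduce to $N=B$ by right exactness and preservation of coproducts) is also correct and is the cleaner conceptual explanation; either would be acceptable here.
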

	
	\begin{theorem}[Categorial view of ring epimorphisms, Theorem 1.2 in \cite{gabriel1987quotients}, \cite{geigle1991perpendicular}, \cite{iyama2003rejective}]\label{Theorem5}
	There is a bijection between:
	\begin{enumerate}[(i)]
	\item ring epimorphisms $A \to B$ up to equivalence (given by isomorphisms of targets thereof);
	\item bireflective subcategories $\mathcal{X}_B$ of $\mathsf{Mod-}A$ (respectively, $A\mathsf{-Mod}$), i.e., strict full subcategories of closed under products, coproducts, kernels and cokernels.
	\end{enumerate}
	If, furthermore, $A$ is a finitely generated algebra over $k$, this bijection can be restricted between:
	\begin{enumerate}[(i)]
	\item $k-$algebra epimorphisms $A \to B$ up to equivalence, where $B$ is a finitely generated algebra over $k$;
	\item bireflective subcategories $\mathcal{X}_B$ of $\mathsf{mod-}A$ (respectively, $A\mathsf{-mod}$), i.e., strict full functorially finite subcategories closed under kernels and cokernels.
	\end{enumerate}
	\end{theorem}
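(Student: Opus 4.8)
I would prove the stated bijection by writing down the two maps explicitly and checking they are mutually inverse; the substance is in reconstructing an epimorphism from a subcategory. In the direction from epimorphisms, given a ring epimorphism $f\colon A\to B$ I take $\mathcal X_B\subseteq\mathsf{Mod-}A$ to be the essential image of the restriction functor $f^*\colon\mathsf{Mod-}B\to\mathsf{Mod-}A$, which by Theorem~\ref{Theorem4} is fully faithful and hence an equivalence onto $\mathcal X_B$. Since $f^*$ is exact and preserves arbitrary products and coproducts — it is simultaneously a right adjoint of $f_!$ and a left adjoint of $f_*$ — full faithfulness forces $\mathcal X_B$ to be closed in $\mathsf{Mod-}A$ under kernels, cokernels, products and coproducts (for instance $\ker(f^*g)=f^*(\ker g)\in\mathcal X_B$), and transporting the adjunctions $f_!\dashv f^*\dashv f_*$ across the equivalence exhibits $M\mapsto f^*(M\otimes_A B)$ and $M\mapsto f^*(\mathrm{Hom}_A(B,M))$ as a left and a right adjoint of the inclusion. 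Equivalent epimorphisms give naturally isomorphic restriction functors, hence the same $\mathcal X_B$, so the assignment is well defined on equivalence classes.

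For the converse, let $\mathcal X$ be a strict full subcategory of $\mathsf{Mod-}A$ closed under products, coproducts, kernels and cokernels. Then $\mathcal X$ is closed under all limits and all colimits in $\mathsf{Mod-}A$ (a filtered colimit is a cokernel of a map between coproducts), so by standard facts about subcategories of Grothendieck categories (see \cite{geigle1991perpendicular}) the inclusion $\iota\colon\mathcal X\hookrightarrow\mathsf{Mod-}A$ has a left adjoint $\ell$ and a right adjoint; write $\eta\colon\mathrm{id}\to\iota\ell$ for the unit. I set $B:=\ell(A_A)$. Evaluation at $1$ together with the adjunction isomorphism $\mathrm{Hom}_{\mathcal X}(\ell A,X)\cong\mathrm{Hom}_A(A,\iota X)$ identifies $\mathrm{End}_{\mathsf{Mod-}A}(\ell A)$ with the abelian group underlying $B$; carrying over the composition makes $B$ a unital ring with identity $1_B=\eta_A(1_A)$, and naturality of $\eta$ identifies the ring homomorphism $A\cong\mathrm{End}_A(A_A)\xrightarrow{\ell}\mathrm{End}_A(\ell A)\cong B$ with $\eta_A$, so $f:=\eta_A\colon A\to B$ is a ring homomorphism whose associated $A$-module structure on $B$ is the one $B=\iota\ell(A)$ already carries.

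The heart of the argument is to identify $\mathsf{Mod-}B$ with $\mathcal X$. Applying $\iota\ell$ and $-\otimes_A B$ to a free presentation of an arbitrary $M\in\mathsf{Mod-}A$ and comparing the two resulting right-exact sequences — using that $\ell$, being a left adjoint, is right exact and preserves coproducts, while $\iota$ preserves cokernels and coproducts by hypothesis — yields a natural isomorphism $\iota\ell\cong f^*(-\otimes_A B)$ compatible with the canonical maps $\eta$ and $m\mapsto m\otimes 1$. Taking $M=\iota X$ for $X\in\mathcal X$ and using the counit isomorphism $\ell\iota\cong\mathrm{id}$, the natural map $X\to X\otimes_A B$ becomes an isomorphism, so each $X\in\mathcal X$ carries a unique $B$-module structure extending its $A$-module structure; this defines a functor $\mathcal X\to\mathsf{Mod-}B$ quasi-inverse to $f^*$. (Essential surjectivity of $f^*$ onto $\mathcal X$ is clear anyway: a free presentation of $N\in\mathsf{Mod-}B$ writes $f^*N$ as a cokernel of a map between copies of $B=\iota\ell A\in\mathcal X$.) Hence $f^*\colon\mathsf{Mod-}B\to\mathsf{Mod-}A$ is fully faithful with essential image $\mathcal X$, so Theorem~\ref{Theorem4} shows $f$ is a ring epimorphism and $\mathcal X_B=\mathcal X$. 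Finally the two assignments are mutually inverse: for a ring epimorphism $f\colon A\to B$ the reflection attached to $\mathcal X_B$ is $f^*(-\otimes_A B)$, so $\ell(A)=f^*B=B$ with its original ring structure (full faithfulness of $f^*$ forces $\mathrm{End}_A(f^*B)=\mathrm{End}_B(B)=B$) and the reconstructed homomorphism is again $f$.

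For the refinement to a finitely generated $k$-algebra $A$, I would carry out the same construction inside $\mathsf{mod-}A$, where only finite products and coproducts are available and their former role in producing the reflection and coreflection is taken over by functorial finiteness of $\mathcal X$ in $\mathsf{mod-}A$: from such an $\mathcal X$ one passes to its closure under direct limits in $\mathsf{Mod-}A$, applies the first part, and checks that the resulting $B$ is again finitely generated over $k$; conversely, for a $k$-algebra epimorphism $A\to B$ with $B$ finitely generated one verifies that $\mathcal X_B\cap\mathsf{mod-}A$ is functorially finite and closed under kernels and cokernels and recovers $B$ — see \cite{gabriel1987quotients}, \cite{geigle1991perpendicular}, \cite{iyama2003rejective}. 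I expect the main obstacles to be, first, the naturality and ring-theoretic compatibility bookkeeping needed to make $\iota\ell\cong f^*(-\otimes_A B)$ and the ring structure on $B$ precise, and second, the finiteness control in the refined statement — that functorial finiteness is exactly the right substitute for closure under infinite (co)products and that the reconstructed $B$ stays finitely generated over $k$.
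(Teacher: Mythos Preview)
The paper does not prove Theorem~\ref{Theorem5}; it is stated as a cited result from \cite{gabriel1987quotients}, \cite{geigle1991perpendicular}, \cite{iyama2003rejective} in the review section, with no proof given, and the text immediately proceeds to derive consequences. So there is nothing in the paper to compare your argument against.

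That said, your sketch is the standard line of attack and is essentially sound: in one direction use full faithfulness of $f^*$ (Theorem~\ref{Theorem4}) together with its two adjoints to get closure under (co)products, kernels and cokernels; in the other direction set $B=\ell(A_A)$ for $\ell$ the reflector, endow it with the ring structure coming from $\mathrm{End}_A(\ell A)$, and identify $\iota\ell$ with $f^*(-\otimes_A B)$ via free presentations. The points you flag as potential obstacles --- the bookkeeping in making the isomorphism $\iota\ell\cong f^*(-\otimes_A B)$ a natural isomorphism of \emph{rings} on $A$, and the finiteness control in the $\mathsf{mod}$-version --- are indeed where the cited references do the real work, and your outline correctly defers to them. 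One small point: to get that the inclusion $\mathcal X\hookrightarrow\mathsf{Mod-}A$ admits both adjoints you appeal to closure under all (co)limits; it is worth being explicit that $\mathcal X$ is automatically accessible (being closed under filtered colimits and containing a generator, namely $\ell A$), so that the adjoint functor theorem applies cleanly.
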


	Now, we establish some useful consequences of the results above.
	
	\begin{corollary}[Ring epimorphisms and Morita equivalence]\label{Corollary7}
	Provided that $f: A \to B$ is a ring epimorphism and $A'$ is Morita equivalent to $A$, then there is a ring epimorphism $f': A' \to B'$ with $B'$  Morita equivalent to $B$.
	\end{corollary}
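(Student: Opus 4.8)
The plan is to argue through the categorical description of ring epimorphisms furnished by Theorem \ref{Theorem5}, rather than manipulating the rings directly. Fix the Morita equivalence as an equivalence of categories $G\colon \mathsf{Mod-}A \xrightarrow{\ \sim\ } \mathsf{Mod-}A'$. By Theorem \ref{Theorem4} the restriction of scalars functor $f^*\colon \mathsf{Mod-}B \to \mathsf{Mod-}A$ is fully faithful, so it identifies $\mathsf{Mod-}B$ with its essential image $\mathcal{X}_B \subseteq \mathsf{Mod-}A$, and under the bijection of Theorem \ref{Theorem5} this $\mathcal{X}_B$ is precisely the bireflective subcategory attached to $f$.

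First I would transport $\mathcal{X}_B$ across $G$: let $\mathcal{X}' \subseteq \mathsf{Mod-}A'$ be the strict full subcategory of all modules isomorphic to $G(X)$ with $X \in \mathcal{X}_B$. Since $G$ is an equivalence it preserves and reflects products, coproducts, kernels and cokernels, so $\mathcal{X}'$ inherits all the closure properties of $\mathcal{X}_B$ and is again bireflective. Feeding $\mathcal{X}'$ into the inverse direction of Theorem \ref{Theorem5} yields a ring epimorphism $f'\colon A' \to B'$ whose associated bireflective subcategory is $\mathcal{X}'$; by Theorem \ref{Theorem4} once more, $f'^*$ identifies $\mathsf{Mod-}B'$ with $\mathcal{X}'$.

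Finally I would read off the Morita equivalence of the targets by composing the equivalences already in hand,
$$\mathsf{Mod-}B\ \simeq\ \mathcal{X}_B\ \xrightarrow{\ G\ }\ \mathcal{X}'\ \simeq\ \mathsf{Mod-}B',$$
which says exactly that $B$ and $B'$ are Morita equivalent. Should one wish to retain the $k$-algebra refinement of Theorem \ref{Theorem5}, it suffices to add the observation that a $k$-algebra Morita equivalent to a finitely generated one is itself finitely generated, so that the same argument runs inside $\mathsf{mod-}A$ and $\mathsf{mod-}A'$.

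Most of these steps are formal; the load-bearing facts are that $\mathcal{X}_B$ is the essential image of $f^*$ (so that $B$ is recoverable, up to Morita equivalence, from the subcategory alone) and that an equivalence of module categories preserves exactly the closure properties that define bireflectivity. I expect the mild friction to be in spelling out the latter and in being careful that being a strict full subcategory is preserved, rather than in any real computation. As an alternative, one can take the concrete route: write $A' = \mathrm{End}_A(P)$ for a progenerator $P_A$, check that $P \otimes_A B$ is a progenerator over $B$ so that $B' := \mathrm{End}_B(P \otimes_A B)$ is Morita equivalent to $B$, and verify that $g \mapsto g \otimes_A \mathrm{id}_B$ defines a ring epimorphism $A' \to B'$ by comparing restriction functors around the resulting square of equivalences — there the associativity bookkeeping is the main obstacle.
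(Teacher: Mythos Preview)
Your proof is correct and follows essentially the same route as the paper: transport the bireflective subcategory $\mathcal{X}_B$ across the Morita equivalence, invoke Theorem~\ref{Theorem5} to produce $f'\colon A'\to B'$, and then chain the equivalences $\mathsf{Mod-}B\simeq\mathcal{X}_B\simeq\mathcal{X}'\simeq\mathsf{Mod-}B'$. The paper's argument is slightly terser but identical in substance; your added remarks on the $k$-algebra refinement and the progenerator alternative are extras not present in the original.
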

	\begin{proof}
	Let $F: \mathsf{Mod-}A \to \mathsf{Mod-}A'$ be an equivalence of categories, and let $\mathcal{X}_B$ be the bireflective subcategory corresponding to the ring epimorphism $f: A \to B$ by Theorem \ref{Theorem5}.
	
	The functor $F$ restricts to an equivalence between $\mathcal{X}_B$ and $(\mathcal{X}_B)'$ its essential image under $F$. As $F$ is an equivalence of categories, $(\mathcal{X}_B)'$ is strictly full subcategory of $\mathsf{Mod}-A'$, closed under products, coproducts, kernels and cokernels, hence bireflective, since $\mathcal{X}_B$ is such.
	
	From Theorem \ref{Theorem5}, we obtain that there is a ring epimorphism $f': A' \to B'$ such that the essential image of the restriction functor $(f')^*$ is $(\mathcal{X}_B)'$. We have that $\mathsf{Mod-}B$ is equivalent to $\mathcal{X}_B$, $\mathcal{X}_B$ to $(\mathcal{X}_B)'$, and $(\mathcal{X}_B)'$ to $\mathsf{Mod-}B'$. This yields that $\mathsf{Mod-}B$ and $\mathsf{Mod-}B'$ are equivalent.
	\end{proof}
	
	\begin{lemma}\label{Lemma7}
	Let $A$ be a ring, $M, N, L \in \mathsf{Mod-}A$, and 
	$$
	\begin{tikzcd}
	M \arrow[two heads]{d}{\varphi} \arrow{dr}{\psi} &\\
	L \arrow{r}{\varrho} & N
	\end{tikzcd}
	$$
	be a commutative square in $\mathsf{Set}$ with $\varphi, \psi$ $A-$homomorphisms, $\varphi$ surjective, then $\varrho$ is an $A-$homomorphism.
	\end{lemma}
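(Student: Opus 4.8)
The plan is to carry out a routine diagram chase, exploiting the surjectivity of $\varphi$ to lift elements of $L$ back to $M$, where the additive and $A$-linear structure can be accessed through the $A$-homomorphism $\psi$.

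First I would fix arbitrary $l_1, l_2 \in L$ and $a \in A$. Since $\varphi$ is surjective, I can choose $m_1, m_2 \in M$ with $\varphi(m_1) = l_1$ and $\varphi(m_2) = l_2$; because $\varphi$ is an $A$-homomorphism, we then have $\varphi(m_1 + m_2) = l_1 + l_2$ and $\varphi(m_1 a) = l_1 a$. Using the commutativity $\varrho \varphi = \psi$ together with the fact that $\psi$ is an $A$-homomorphism, I would compute
$$\varrho(l_1 + l_2) = \varrho\varphi(m_1 + m_2) = \psi(m_1 + m_2) = \psi(m_1) + \psi(m_2) = \varrho\varphi(m_1) + \varrho\varphi(m_2) = \varrho(l_1) + \varrho(l_2),$$
and likewise
$$\varrho(l_1 a) = \varrho\varphi(m_1 a) = \psi(m_1 a) = \psi(m_1)a = \varrho\varphi(m_1)a = \varrho(l_1)a.$$
Hence $\varrho$ is additive and respects the $A$-action, so it is an $A$-homomorphism.

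There is essentially no obstacle here; the one point worth making explicit is that we never need the lifts $m_1, m_2$ to be unique, since $\varrho$ is given as an honest function on the underlying set of $L$, so the left-hand sides of the two displays are already well defined independently of the chosen preimages. The surjectivity hypothesis on $\varphi$ is exactly what guarantees that every element of $L$ is reachable by this argument, and it is the only place that hypothesis is used.
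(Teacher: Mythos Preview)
Your proof is correct and follows essentially the same approach as the paper: lift elements of $L$ along the surjection $\varphi$, then use commutativity and the $A$-linearity of $\psi$ to verify additivity and compatibility with the $A$-action. The paper's argument is identical, merely stated more tersely.
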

	\begin{proof}
	Choose $\ell_1, \ell_2 \in L$ and $a \in A$. Since $\varphi$ is surjective, there are $m_1, m_2$ such that $\varphi(m_i) = \ell_i, i = 1,2$. Now, using commutativity of the diagram, we compute $\varrho(\ell_1 + \ell_2) = \varrho(\varphi(m_1 + m_2)) = \psi(m_1 + m_2) = \psi(m_1) + \psi(m_2) = \varrho(\ell_1) + \varrho(\ell_2)$. For $\varrho(\ell_1\cdot a) = \varrho(\ell_1) \cdot a$, we proceed similarly. 
	\end{proof}

	\begin{theorem}\label{Theorem8}
	A ring homomorphism $f: A \to B$ is a ring epimorphism if and only if, for all finitely generated $M, N \in \mathsf{Mod-}B$, $\mathrm{Hom}_A(M,N) = \mathrm{Hom}_B(M,N)$ and, for any $L \in \mathsf{Mod-}B$ and an $A-$homomorphism $\psi: B \to L$, the $\mathrm{Im}\,\psi$ is contained in a finitely generated $B-$submodule of $L$.
	\end{theorem}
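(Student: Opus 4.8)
The plan is to route everything through the categorical characterisation of Theorem \ref{Theorem4}. Since the restriction functor $f^*$ is automatically faithful (on underlying sets it is the identity), $f$ is a ring epimorphism exactly when $f^*$ is \emph{full}, i.e.\ when $\mathrm{Hom}_A(M,N) = \mathrm{Hom}_B(M,N)$ for \emph{all} $M,N \in \mathsf{Mod-}B$. So the theorem amounts to saying that this fullness for all modules is equivalent to fullness on finitely generated modules — condition (1) — together with the finiteness condition (2) on images of $A$-homomorphisms out of $B$.

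For the forward implication I would assume $f$ is a ring epimorphism and read off (1) by specialising fullness of $f^*$ to finitely generated $M, N$. For (2), given an $A$-homomorphism $\psi\colon B \to L$, fullness of $f^*$ makes $\psi$ a $B$-homomorphism, so $\mathrm{Im}\,\psi = \psi(1)B$ is a cyclic — hence finitely generated — $B$-submodule of $L$, which certainly contains $\mathrm{Im}\,\psi$.

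The substance is the converse. Assuming (1) and (2), I would take arbitrary $M, N \in \mathsf{Mod-}B$ and an $A$-homomorphism $g\colon M \to N$, and aim to show $g$ is $B$-linear; since $g$ is already additive, it is enough to verify $g(mb) = g(m)b$ for all $m \in M$, $b \in B$. Fixing $m$, the map $\psi_m\colon B \to N$, $b \mapsto g(mb)$, is $A$-linear, being $g$ composed with the $B$-linear (hence $A$-linear) map $b \mapsto mb$. By (2) its image lies in a finitely generated $B$-submodule $N_0 \subseteq N$; as $B$ is cyclic and $N_0$ is finitely generated, (1) applies to the pair $(B, N_0)$ and upgrades $\psi_m$ to a $B$-homomorphism. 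Then $g(mb) = \psi_m(b) = \psi_m(1)b = g(m)b$, which is what was wanted; hence $g \in \mathrm{Hom}_B(M,N)$, $f^*$ is full, and $f$ is a ring epimorphism by Theorem \ref{Theorem4}.

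I expect the converse to be the only real obstacle, and the crux there is recognising that (2) is precisely the hypothesis needed to reduce an arbitrary codomain $N$ to a finitely generated submodule so that (1) becomes applicable — the domain needs no such work, since the cyclic module $B$ is already a finitely generated ``test object''. (Lemma \ref{Lemma7} offers an alternative packaging: fix a free presentation $\bigoplus B \twoheadrightarrow M$, check componentwise that $g$ precomposed with it is $B$-linear by the same argument, and transfer $B$-linearity down to $M$; the direct argument avoids this.) Minor care is also needed to confirm that $\psi_m$ is $A$-linear for the module structures induced by $f$ and that restricting its codomain to $N_0$ is legitimate.
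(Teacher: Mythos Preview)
Your proposal is correct. The forward direction matches the paper's (you invoke Theorem~\ref{Theorem4} where the paper cites Theorem~\ref{Theorem3}, but the content is the same). For the converse, the paper takes precisely the ``alternative packaging'' you mention in passing: it fixes a surjection $\varrho\colon B^{(I)} \twoheadrightarrow M$, applies condition~(2) to each component $\tau\varrho\mu_i\colon B \to N$ to land in a finitely generated $N_i$, upgrades each $\tau\varrho\mu_i$ to a $B$-map via~(1), reassembles them by the universal property of the coproduct, and then invokes Lemma~\ref{Lemma7} to push $B$-linearity down along $\varrho$. Your route is more elementary: by testing against the single element $m$ via $\psi_m$, you bypass the free presentation and Lemma~\ref{Lemma7} entirely, reading off $g(mb)=g(m)b$ directly. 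Both arguments reduce to the same core observation---that (2) lets one shrink the codomain so that (1) applies to the cyclic test object $B$---but yours reaches the conclusion in fewer steps.
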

	\begin{proof}
	$(\Rightarrow)$ The first part follows from Theorem \ref{Theorem3}. The second part from the fact that homomorphisms in $\mathrm{Hom}_A(B,L) = \mathrm{Hom}_S(B,L)$ correspond to cyclic $B-$submodules of $L$ via their image for any $L \in \mathsf{Mod-}B$.\\\\
	$(\Leftarrow)$ Let $M, N \in \mathsf{Mod-}B$ and $\tau: M \to N$ be an $A-$homomorphism between them. Choose $\varrho: B^{(I)} \twoheadrightarrow M$ surjective $B-$homomorphism. For each $i \in I$, there is a $N_i$ a finitely generated $B-$submodule such that $\mathrm{Im}\, \tau \varrho \mu_i$ lies inside it. Since, by our assumption, $\mathrm{Hom}_A(B,N_i) = \mathrm{Hom}_B(B,N_i)$, we have that $\tau \varrho \mu_i$ is an $B-$homomorphism. By universal property of $B^{(I)}$, there is a unique $B-$homomorphism $\psi: B^{(I)} \to N$ such that, for all $i \in I$, $\tau \varrho \mu_i = \psi \mu_i$. Due to its uniqueness, we get that $\tau \varrho = \psi$, we and apply Lemma \ref{Lemma7} to obtain that $\tau$ is an $B-$homomorphism. 
	\end{proof}

	\begin{corollary}\label{Corollary9}
	A ring homomorphism $f: A \to B$ that turns $B$ into a finitely generated $A-$module is a ring epimorphism if and only if, for all finitely generated $M, N \in \mathsf{Mod-}B$, $\mathrm{Hom}_A(M,N) = \mathrm{Hom}_B(M,N)$.
	\end{corollary}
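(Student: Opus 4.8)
The plan is to deduce this directly from Theorem \ref{Theorem8}. The forward implication requires no work: if $f$ is a ring epimorphism, then Theorem \ref{Theorem8} already yields $\mathrm{Hom}_A(M,N) = \mathrm{Hom}_B(M,N)$ for all finitely generated $M, N \in \mathsf{Mod-}B$, which is exactly the asserted condition.

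For the converse, I would check the two hypotheses appearing in Theorem \ref{Theorem8}. The Hom-equality is assumed outright, so the only remaining point is to verify that for every $L \in \mathsf{Mod-}B$ and every $A$-homomorphism $\psi: B \to L$, the image $\mathrm{Im}\,\psi$ is contained in a finitely generated $B$-submodule of $L$. This is precisely where the hypothesis that $f$ makes $B$ a finitely generated $A$-module comes in: fix $A$-module generators $b_1, \dots, b_n$ of $B$. Given $b \in B$, write $b = \sum_{i=1}^n b_i a_i$ with $a_i \in A$; since $\psi$ is $A$-linear, $\psi(b) = \sum_{i=1}^n \psi(b_i) a_i$, so $\mathrm{Im}\,\psi \subseteq \sum_{i=1}^n \psi(b_i) A \subseteq \sum_{i=1}^n \psi(b_i) B$, and the latter is a finitely generated $B$-submodule of $L$. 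Both conditions of Theorem \ref{Theorem8} are then satisfied, so $f$ is a ring epimorphism.

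I do not expect any genuine obstacle: the corollary is really just the special case of Theorem \ref{Theorem8} in which the second, more technical, hypothesis becomes automatic. The only mild point of care is to invoke the $A$-linearity of $\psi$ (and not $B$-linearity, which is not available a priori) when pushing the $A$-module generators of $B$ through $\psi$; this is exactly what makes the finitely generated $A$-module hypothesis do its job.
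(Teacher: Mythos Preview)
Your proof is correct and follows exactly the same approach as the paper: both verify the second hypothesis of Theorem \ref{Theorem8} by noting that the image of $\psi$ is generated over $A$ by the images $\psi(b_1),\dots,\psi(b_n)$ of a finite $A$-generating set of $B$, and hence lies in the finitely generated $B$-submodule $\sum_i \psi(b_i) B$ of $L$. Your write-up is slightly more explicit, but there is no substantive difference.
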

	\begin{proof}
	For any $L \in \mathsf{Mod-}B$ and an $A-$homomorphism $\psi: B \to L$, the $\mathrm{Im}\,\psi$ is a finitely generated $A-$module since $B$ is such. A $B-$submodule of $L$ generated by these finitely many generators clearly contains $\mathrm{Im}\,\psi$.
	\end{proof}
	
	\subsection{Universal localisations}
	In this subsection, we gather some notions and results regarding universal localisations that will be of use later in this text.
	
	\begin{theorem}[Universal localisation, \cite{schofield1985representations}]\label{Theorem10}
	Let $\Sigma$ be a set of morphisms between finitely generated projective right $A-$modules. Then there are: a ring $A_\Sigma$ and a homomorphism of rings $f_\Sigma: A \to A_\Sigma$, called the universal localisation of $A$ at $\Sigma$, such that
	\begin{enumerate}[(i)]
	\item $f_\Sigma$ is $\Sigma-$inverting, i.e. if $\alpha: P \to Q$ belongs to $\Sigma$, then $\alpha \otimes_A A_\Sigma: P \otimes_A A_\Sigma \to Q \otimes_A A_\Sigma$ an isomorphism of right $A_\Sigma-$modules, and
	\item $f_\Sigma$ is universal $\Sigma-$inverting, i.e. for any $\Sigma-$inverting ring homomorphism $\psi: A \to B$, there is a unique ring homomorphism $\bar{\psi}: A_\Sigma \to B$ such that $\psi = \bar{\psi} f_\Sigma$.
	\end{enumerate}
	Moreover, the homomorphism $f_\Sigma$ is a ring epimorphism and $\mathrm{Tor}_1^A(A_\Sigma, A_\Sigma) = 0$. The restrictions of $A_\Sigma-$modules are modules $M \in \mathsf{Mod-}A$ such that, for each $\alpha \in \Sigma$, the map $\mathrm{Hom}_A(\alpha, M)$ is invertible. 
	\end{theorem}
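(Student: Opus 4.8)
The plan is to produce $A_\Sigma$ by an explicit presentation, deduce the universal property and the ring-epimorphism claim almost formally, and isolate the vanishing of $\mathrm{Tor}_1$ as the genuinely technical point. For the construction I would first encode each $\alpha\colon P\to Q$ in $\Sigma$ by matrix data: choose idempotents $e=e^2\in M_p(A)$, $\varepsilon=\varepsilon^2\in M_q(A)$ with $P\cong eA^p$ and $Q\cong\varepsilon A^q$, so that $\alpha$ is represented by a matrix $\underline\alpha\in M_{q\times p}(A)$ with $\varepsilon\,\underline\alpha\,e=\underline\alpha$ (alternatively, padding with identity maps of free modules, one may simply take $P$, $Q$ free). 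Let $A_\Sigma$ be the quotient of the coproduct of $A$ with the free $\mathbb Z$-ring on indeterminates $x^\alpha_{ij}$, grouped into matrices $X^\alpha=(x^\alpha_{ij})\in M_{p\times q}$, one per $\alpha\in\Sigma$, by the two-sided ideal generated by the entries of
$$\underline\alpha X^\alpha-\varepsilon,\qquad X^\alpha\underline\alpha-e,\qquad eX^\alpha\varepsilon-X^\alpha ,$$
and take $f_\Sigma\colon A\to A_\Sigma$ to be the structure map. By construction $X^\alpha$ descends to a two-sided inverse of $\underline\alpha\otimes_A A_\Sigma$ between $eA_\Sigma^p$ and $\varepsilon A_\Sigma^q$, so $f_\Sigma$ is $\Sigma$-inverting.

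Given any $\Sigma$-inverting $\psi\colon A\to B$, the inverse of the $B$-module isomorphism $\alpha\otimes_A B$ is represented by a matrix $Y^\alpha\in M_{p\times q}(B)$ satisfying the analogous idempotent condition, and the three families of relations above hold for the $Y^\alpha$; hence $x^\alpha_{ij}\mapsto(Y^\alpha)_{ij}$ together with $\psi$ defines a ring homomorphism $\bar\psi\colon A_\Sigma\to B$ with $\bar\psi f_\Sigma=\psi$, and it is unique since $A_\Sigma$ is generated by $f_\Sigma(A)$ and the $x^\alpha_{ij}$, whose images are forced (a two-sided inverse of a matrix being unique). This part is routine once the presentation is in place. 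The epimorphism claim is then immediate: if $\varrho_1,\varrho_2\colon A_\Sigma\to T$ satisfy $\varrho_1 f_\Sigma=\varrho_2 f_\Sigma=:\psi$, then $\psi$ is $\Sigma$-inverting because $\alpha\otimes_A T\cong(\alpha\otimes_A A_\Sigma)\otimes_{A_\Sigma}T$ is an isomorphism, so uniqueness of the extension forces $\varrho_1=\varrho_2=\bar\psi$; alternatively one invokes Theorem \ref{Theorem3}.

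For the description of the restrictions, one inclusion is formal: if $M$ underlies an $A_\Sigma$-module, the tensor–hom adjunction for $f_\Sigma$ gives a natural isomorphism $\mathrm{Hom}_A(\alpha,M)\cong\mathrm{Hom}_{A_\Sigma}(\alpha\otimes_A A_\Sigma,M)$, which is invertible because $\alpha\otimes_A A_\Sigma$ is. Conversely, if $\mathrm{Hom}_A(\alpha,M)$ is invertible for every $\alpha\in\Sigma$, I would extend the $A$-action on $M$ to $A_\Sigma$ by letting $X^\alpha$ act through the inverse of $\mathrm{Hom}_A(\alpha,M)$, identified with a map between the summands of $M^q$ and $M^p$ cut out by $\varepsilon$ and $e$; the defining relations are precisely the identities needed for this to be a well-defined right $A_\Sigma$-module structure restricting to the given one, and uniqueness of the extension follows from $f_\Sigma^{*}$ being fully faithful (Theorem \ref{Theorem4}). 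Hence the restriction functor identifies $\mathsf{Mod-}A_\Sigma$ with the stated subcategory.

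The vanishing $\mathrm{Tor}_1^A(A_\Sigma,A_\Sigma)=0$ I expect to be the main obstacle. First I would reduce to $\Sigma$ finite: one checks $A_\Sigma\cong\varinjlim_F A_F$ over the directed set of finite $F\subseteq\Sigma$ (a $\Sigma$-inverting homomorphism out of $A$ being the same as a compatible family of $F$-inverting ones), and, $\mathrm{Tor}$ commuting with filtered colimits in each variable while the diagonal is cofinal in $\{(F,F')\}$, this gives $\mathrm{Tor}_1^A(A_\Sigma,A_\Sigma)\cong\varinjlim_F\mathrm{Tor}_1^A(A_F,A_F)$. For finite $\Sigma$ one must analyse the module of relations in the presentation: from the relations one builds a two-term complex of free right $A$-modules surjecting onto $A_\Sigma$ and argues that, after applying $-\otimes_A A_\Sigma$, it has no homology in degree one — equivalently, that the relations submodule becomes a direct summand, so that base change creates no new relations. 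This is where Cohn's and Malcolmson's normal-form criteria for elements of $A_\Sigma$ (and for invertibility of matrices over $A_\Sigma$) do the real work, and where the idempotent bookkeeping in the non-free case has to be handled carefully. Everything before this step is formal; the colimit reduction together with the normal-form analysis is the technical heart of the theorem.
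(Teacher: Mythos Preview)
The paper does not prove this theorem at all: Theorem~\ref{Theorem10} is stated with attribution to \cite{schofield1985representations} as part of the review of known results in Section~1.3, and no argument is supplied. So there is no ``paper's own proof'' to compare your proposal against.

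Evaluated on its own, your sketch follows the standard Cohn--Schofield construction and is essentially correct for the formal parts. The presentation of $A_\Sigma$ by adjoining matrix inverses subject to the idempotent relations, the derivation of the universal property, the epimorphism conclusion from uniqueness of the factorisation, and the identification of the essential image of $f_\Sigma^*$ via $\mathrm{Hom}_A(\alpha,M)$ are all handled correctly and are indeed routine once the presentation is written down.

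You are right that $\mathrm{Tor}_1^A(A_\Sigma,A_\Sigma)=0$ is the substantive point, and your reduction to finite $\Sigma$ via the filtered colimit $A_\Sigma\cong\varinjlim_F A_F$ is valid. However, the remaining step is left too vague to count as a proof: ``build a two-term complex and argue it has no homology in degree one after base change'' and an appeal to ``Cohn's and Malcolmson's normal-form criteria'' names the tools but does not indicate how they yield the conclusion. Schofield's actual argument (Theorem~4.7 in \cite{schofield1985representations}) proceeds by analysing the $A$--$A$-bimodule structure of $A_\Sigma$ directly, showing that the multiplication map $A_\Sigma\otimes_A A_\Sigma\to A_\Sigma$ has a specific form that forces $\mathrm{Tor}_1$ to vanish; an alternative is the Bergman--Dicks approach via the derived functor of the pushout. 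Either way, one needs an explicit handle on the kernel of the multiplication map or on a projective resolution of $A_\Sigma$ over $A$, and your proposal does not yet supply one. As a proof \emph{sketch} identifying where the work lies, it is accurate; as a proof, the $\mathrm{Tor}$ step remains a gap.
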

	
	\begin{theorem}[Theorems 4.7 and 4.8 in \cite{schofield1985representations}]\label{Theorem11}
	Let $f: A \to B$ be a ring epimorphism; then the following conditions are equivalent:
	\begin{enumerate}[(i)]
		\item $\mathrm{Ext}^1_A = \mathrm{Ext}^1_B$ on $\mathsf{Mod-}B$;
		\item $\mathrm{Tor}_1^A(B,B)=0$;
		\item $\mathrm{Tor}_1^A = \mathrm{Tor}_1^B$ on $\mathsf{Mod-}B$;
		\item $\mathrm{Ext}^1_R = \mathrm{Ext}^1_S$ on $B\mathsf{-Mod}$
	\end{enumerate}
	Moreover, a universal localisation satisfies all these conditions.
	\end{theorem}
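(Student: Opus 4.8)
The plan is to take condition (ii), $\mathrm{Tor}_1^A(B,B)=0$, as a hub and to establish $(ii)\Leftrightarrow(iii)$, $(ii)\Leftrightarrow(i)$ and $(ii)\Leftrightarrow(iv)$ separately; the final assertion is then immediate, since by Theorem \ref{Theorem10} a universal localisation $f_\Sigma\colon A\to A_\Sigma$ is a ring epimorphism with $\mathrm{Tor}_1^A(A_\Sigma,A_\Sigma)=0$. Throughout I would use two elementary consequences of $f$ being a ring epimorphism. First, Theorem \ref{Theorem3} gives $B\otimes_A B\cong B$, and a routine presentation argument upgrades this to: the multiplication maps $B\otimes_A N\to N$ and $M\otimes_A B\to M$ are isomorphisms for every left, resp.\ right, $B$-module. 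Second, by Theorem \ref{Theorem4} the restriction functor is fully faithful, so $\mathrm{Hom}_A(X,Y)=\mathrm{Hom}_B(X,Y)$ whenever $X,Y$ are $B$-modules; in particular $\mathrm{Hom}_A(B,Y)\cong Y$. I would also use the induction--restriction adjunction $\mathrm{Hom}_A(X,Y)\cong\mathrm{Hom}_B(X\otimes_A B,Y)$ recalled earlier.

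For the implications out of (ii) I would first bootstrap to higher degrees. From $\mathrm{Tor}_1^A(B,B)=0$, a presentation $0\to K\to B^{(I)}\to N\to 0$ of a left $B$-module $N$ by $B$-modules, the long exact $\mathrm{Tor}$-sequence, $\mathrm{Tor}_*^A(B,B^{(I)})\cong\mathrm{Tor}_*^A(B,B)^{(I)}$, and $B\otimes_A K\cong K$ together give $\mathrm{Tor}_1^A(B,N)=0$ and $\mathrm{Tor}_{n+1}^A(B,N)\cong\mathrm{Tor}_n^A(B,K)$, whence by induction $\mathrm{Tor}_n^A(B,N)=0$ for every $n\ge1$ and every $B$-module $N$. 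Consequently a projective $B$-resolution $P_\bullet\to M$ of a right $B$-module $M$ consists of right $A$-modules $P_i$ with $\mathrm{Tor}^A_{>0}(P_i,N)=0$, so it already computes $\mathrm{Tor}_*^A(M,N)$; since $P_i\otimes_A N\cong P_i\otimes_B N$, we obtain $\mathrm{Tor}_*^A(M,N)\cong\mathrm{Tor}_*^B(M,N)$, which is (iii). Dually, $\mathrm{Tor}_n^A(B,B)=0$ for all $n\ge1$ makes $P_\bullet\otimes_A B$ a projective $B$-resolution of $B$ for any projective $A$-resolution $P_\bullet\to B$; the adjunction identifies $\mathrm{Hom}_A(P_\bullet,E)$ with $\mathrm{Hom}_B(P_\bullet\otimes_A B,E)$, giving $\mathrm{Ext}_A^n(B,E)\cong\mathrm{Ext}_B^n(B,E)=0$ for $n\ge1$ and every $B$-module $E$; feeding this into an injective $B$-resolution of $N$ and invoking full faithfulness of restriction then yields $\mathrm{Ext}_A^*(M,N)\cong\mathrm{Ext}_B^*(M,N)$ on $B$-modules, which is (i), and the symmetric argument over $B^{\mathrm{op}}$ yields (iv). (Alternatively, arguments of this kind are exactly what the Cartan--Eilenberg change-of-rings spectral sequences encode, and phrasing the proof through them makes clear that the comparison maps implicit in (i)--(iv) are the canonical ones.)

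For the converses, $(iii)\Rightarrow(ii)$ is immediate on taking $M=N=B$. For $(i)\Rightarrow(ii)$: (i) forces $\mathrm{Ext}_A^1(B,M)=\mathrm{Ext}_B^1(B,M)=0$ for every right $B$-module $M$. Choose a short exact sequence $0\to K\xrightarrow{\iota}Q\to B\to 0$ with $Q$ a projective right $A$-module, so that $\mathrm{Tor}_1^A(B,B)=\ker(\iota\otimes_A B)$. Since $\mathrm{Ext}_A^1(B,K\otimes_A B)=0$, the canonical map $g\colon K\to K\otimes_A B$, $k\mapsto k\otimes 1$, extends along $\iota$ to some $\delta\colon Q\to K\otimes_A B$; applying $-\otimes_A B$ and using $B\otimes_A B\cong B$ one checks that, under the resulting identification $(K\otimes_A B)\otimes_A B\cong K\otimes_A B$, the map $g\otimes_A B$ is the identity of $K\otimes_A B$, so $\iota\otimes_A B$ is split monic and $\mathrm{Tor}_1^A(B,B)=0$. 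Finally $(iv)\Rightarrow(ii)$ follows by applying this to $f^{\mathrm{op}}\colon A^{\mathrm{op}}\to B^{\mathrm{op}}$, using $\mathrm{Tor}_1^{A^{\mathrm{op}}}(B^{\mathrm{op}},B^{\mathrm{op}})\cong\mathrm{Tor}_1^A(B,B)$.

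I expect the genuinely hard point to be this last group of converses. The implications out of (ii) are, once the bootstrap is available, fairly mechanical manipulation of long exact sequences and adjunctions, but extracting a $\mathrm{Tor}$-vanishing from an $\mathrm{Ext}$-vanishing is not mechanical, and the pushout/extension trick used for $(i)\Rightarrow(ii)$ is the one step with real content. A secondary nuisance, as usual with change-of-rings arguments, is the left/right bookkeeping and the need to check that the equalities in (i)--(iv) are meant as isomorphisms of the natural comparison maps; should the hands-on resolution arguments get unwieldy I would retreat to the spectral-sequence formulation, where this is automatic.
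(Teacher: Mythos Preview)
The paper does not prove this theorem: it is stated with attribution to Schofield (Theorems 4.7 and 4.8 in \cite{schofield1985representations}) and no proof is supplied. The only trace of an argument is the ensuing Remark, which records that in Schofield's proof of $(i)\Rightarrow(ii)$ one uses a free presentation of $B$ over $A$; your argument for $(i)\Rightarrow(ii)$, via the short exact sequence $0\to K\xrightarrow{\iota} Q\to B\to 0$ and the extension of $k\mapsto k\otimes 1$ along $\iota$, is precisely of that type, so your approach matches what the paper alludes to for that implication.

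Your proposal is correct in outline. The bootstrap from $\mathrm{Tor}_1^A(B,B)=0$ to $\mathrm{Tor}_n^A(B,N)=0$ for all $B$-modules $N$ and all $n\ge 1$, and the deduction of (iii), is standard and fine. The step $(i)\Rightarrow(ii)$ is the one with real content and your argument is clean: the splitting of $\iota\otimes_A B$ via $\delta\otimes_A B$ works exactly as you say once one identifies $(K\otimes_A B)\otimes_A B\cong K\otimes_A B$. One small wobble: in $(ii)\Rightarrow(i)$, having proved $\mathrm{Ext}_A^n(B,E)=0$ for all $B$-modules $E$, you propose to pass to general $M$ via an injective $B$-resolution of $N$, but a $B$-injective need not be $\mathrm{Hom}_A(M,-)$-acyclic for arbitrary $B$-modules $M$. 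It is cleaner to run the dual of your (iii)-argument: take a free $B$-resolution $F_\bullet\to M$, note each $F_i$ is a direct sum of copies of $B$ so $\mathrm{Ext}_A^{>0}(F_i,N)=0$, hence $F_\bullet$ computes $\mathrm{Ext}_A^*(M,N)$; full faithfulness then identifies $\mathrm{Hom}_A(F_\bullet,N)$ with $\mathrm{Hom}_B(F_\bullet,N)$. This is a cosmetic fix, not a genuine gap.
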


	\begin{remark}
	Provided that $f: A \to B$ a ring epimorphism turns $B$ into a finitely generated $A-$module and $A$ is Noetherian, using a finite free presentation of $B$ in the proof in \cite{schofield1985representations} (Theorem 4.8 therein) yields that $\mathrm{Ext}^1_A = \mathrm{Ext}^1_B$ on finitely generated $B-$modules implies $\mathrm{Tor}_1^A(B,B)=0$.
	\end{remark}

	\begin{definition}[\cite{geigle1991perpendicular}]\label{Definition12}
	A ring epimorphism $f: A \to B$ is called homological ring epimorphism if $\mathrm{Tor}^A_n(B,B) = 0$ for all $n \in \mathbb{N}$.
	\end{definition}

	\begin{theorem}[\cite{krause2010telescope}]\label{Theorem13}
	For hereditary rings, universal localisations and homological epimorphisms coincide.
	\end{theorem}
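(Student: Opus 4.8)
\noindent The statement asserts that two classes of ring epimorphisms out of a hereditary ring $A$ coincide, and the plan is to prove the two inclusions separately, after one preliminary remark: the weak global dimension of a hereditary ring is at most one, so $\mathrm{Tor}_n^A(-,-)=0$ for all $n\ge 2$; in particular, by Definition~\ref{Definition12}, a ring epimorphism $f\colon A\to B$ is a homological epimorphism precisely when $\mathrm{Tor}_1^A(B,B)=0$. For the inclusion \emph{universal localisation $\Rightarrow$ homological epimorphism} I would simply invoke Theorem~\ref{Theorem10}, which guarantees that $f_\Sigma\colon A\to A_\Sigma$ is a ring epimorphism with $\mathrm{Tor}_1^A(A_\Sigma,A_\Sigma)=0$, and combine this with the preliminary remark to get $\mathrm{Tor}_n^A(A_\Sigma,A_\Sigma)=0$ for every $n\ge 1$.

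For the converse, let $f\colon A\to B$ be a homological epimorphism. The plan is to realise $B$ as the universal localisation at
$$\Sigma=\{\sigma\colon P\to Q\mid P,\,Q\text{ finitely generated projective right }A\text{-modules, }\sigma\otimes_A B\text{ an isomorphism}\}.$$
Since $f$ is $\Sigma$-inverting by construction, Theorem~\ref{Theorem10} yields a canonical ring homomorphism $A_\Sigma\to B$ over $A$, and by the bijection of Theorem~\ref{Theorem5} between ring epimorphisms from $A$ and bireflective subcategories of $\mathsf{Mod-}A$ it suffices to prove that $\mathsf{Mod-}B$ and $\mathsf{Mod-}A_\Sigma$ coincide as subcategories of $\mathsf{Mod-}A$. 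The inclusion $\mathsf{Mod-}B\subseteq\mathsf{Mod-}A_\Sigma$ is formal: for a $B$-module $M$ and $\sigma\in\Sigma$ the tensor--hom adjunction identifies $\mathrm{Hom}_A(\sigma,M)$ with $\mathrm{Hom}_B(\sigma\otimes_A B,M)$, which is an isomorphism, so $M$ inverts $\Sigma$ and Theorem~\ref{Theorem10} places it in $\mathsf{Mod-}A_\Sigma$.

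All the content is in the reverse inclusion. First I would use that $A$ is hereditary to rewrite $\mathsf{Mod-}A_\Sigma$ as a Geigle--Lenzing perpendicular category: splitting every $\sigma\in\Sigma$ as a zero map of finitely generated projectives plus an injection of finitely generated projectives (legitimate since submodules of projectives are projective and a surjection onto a projective splits), one checks via Theorem~\ref{Theorem10} that
$$\mathsf{Mod-}A_\Sigma=\mathcal{T}^{\perp}:=\{M\in\mathsf{Mod-}A\mid\mathrm{Hom}_A(N,M)=0=\mathrm{Ext}^1_A(N,M)\text{ for all }N\in\mathcal{T}\},$$
where $\mathcal{T}=\{\mathrm{Coker}\,\sigma\mid\sigma\in\Sigma\}$ is a class of finitely presented $A$-modules, each satisfying $N\otimes_A B=0=\mathrm{Tor}_1^A(N,B)$ (apply $-\otimes_A B$ to a presentation and use that $\sigma\otimes_A B$ is invertible). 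The goal thus becomes: if $M$ is perpendicular to every object of $\mathcal{T}$, then the unit $\eta_M\colon M\to M\otimes_A B$ of the reflection onto $\mathsf{Mod-}B$ is an isomorphism, i.e.\ $M\in\mathsf{Mod-}B$.

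The hard part will be exactly this last step, and it is where both hypotheses are indispensable. The strategy I would follow is to take a projective resolution of $B$ as a right $A$-module, which by heredity has the form $0\to P_1\xrightarrow{d}P_0\to B\to 0$; since $\mathrm{Tor}_1^A(B,B)=0$ and $B\otimes_A B\cong B$ (Theorem~\ref{Theorem3}), the sequence $0\to P_1\otimes_A B\xrightarrow{d\otimes B}P_0\otimes_A B\to B\to 0$ is exact and splits over $B$, so $d\otimes_A B$ has a $B$-linear retraction. Writing $P_0$ and $P_1$ as direct sums of finitely generated projective $A$-modules (possible over a hereditary ring) and chasing $d$ and this retraction through suitable finite sub-sums, one presents $B$ as a filtered colimit of modules $C_\lambda\otimes_A B$ in which $C_\lambda$ is the cokernel of a map between finitely generated projective $A$-modules whose base change along $f$ is a split monomorphism; such $C_\lambda$ are finitely presented and built out of objects intimately related to $\mathcal{T}$. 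Since $\mathrm{Hom}_A(N,-)$ and $\mathrm{Ext}^1_A(N,-)$ commute with filtered colimits for the finitely presented modules $N\in\mathcal{T}$, the vanishing conditions defining $\mathcal{T}^{\perp}$ then propagate along the colimit to annihilate the kernel and cokernel of $\eta_M$ (the latter being $M\otimes_A\mathrm{Coker}\,f$, the former controlled by connecting maps of the resolution, with Theorem~\ref{Theorem11} supplying the needed agreement of $\mathrm{Ext}^1$ over $A$ and over $B$), whence $\eta_M$ is an isomorphism. I expect the delicate bookkeeping in this colimit approximation --- recovering the possibly non-finitely-generated ring $B$ from the finitely presented ``torsion'' modules in $\mathcal{T}$ --- to be the genuine obstacle; it breaks down without heredity (where higher $\mathrm{Ext}$ obstructions reappear and $\mathcal{T}$ no longer determines $\mathsf{Mod-}B$) and without the homological hypothesis (where $d\otimes_A B$ need not split, severing the link between the $C_\lambda$ and $\mathcal{T}$), and it is carried out in detail in \cite{krause2010telescope}, drawing on the perpendicular-category machinery of \cite{geigle1991perpendicular} and the universal-localisation theory of \cite{schofield1985representations}.
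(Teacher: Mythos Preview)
The paper does not supply a proof of Theorem~\ref{Theorem13}; it is quoted as a background result from \cite{krause2010telescope} and used as a black box. There is therefore no ``paper's own proof'' to compare your proposal against.

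That said, your outline is a faithful sketch of the argument in the cited source. The forward implication is complete and correct as written. For the converse you have identified the right universal localisation (at all maps between finitely generated projectives that become invertible over $B$), the right reformulation via Theorem~\ref{Theorem5} and the perpendicular category $\mathcal{T}^{\perp}$, and the right difficulty: passing from the finitely presented ``torsion'' class $\mathcal{T}$ back to the possibly large ring $B$. Your description of the colimit approximation step is, however, only a sketch and at points imprecise --- for instance, the claim that the $C_\lambda$ are ``built out of objects intimately related to $\mathcal{T}$'' and that the vanishing conditions then ``propagate along the colimit to annihilate the kernel and cokernel of $\eta_M$'' would need to be made exact, and this is genuinely where the work lies. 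You are right to flag it as the crux and to point to \cite{krause2010telescope} for the details; since the present paper treats the theorem as an imported result, a self-contained proof is not expected here.
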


	Similarly as in the previous section, we now derive some useful conclusions from the results above:
	
	\begin{corollary}[Universal localisations and Morita equivalence]\label{Corollary14}
	Provided that $f: A \to B$ is a universal localisation and $A'$ is Morita equivalent to $A$; then there is a universal localisation $f': A' \to B'$ with $B'$ Morita equivalent to $B$.
	\end{corollary}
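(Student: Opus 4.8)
The plan is to transport the defining set of morphisms $\Sigma$ along the Morita equivalence and then identify the bireflective subcategory attached to the resulting universal localisation by means of the $\mathrm{Hom}$-theoretic description of $A_\Sigma$-modules given in Theorem \ref{Theorem10}.

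First I would write $f = f_\Sigma \colon A \to A_\Sigma = B$ for the universal localisation at a set $\Sigma$ of morphisms between finitely generated projective right $A$-modules, and fix an equivalence $F \colon \mathsf{Mod-}A \to \mathsf{Mod-}A'$. Since an equivalence of module categories preserves finitely generated projective modules (being a direct summand of a finite direct sum of a progenerator is a categorical property), the set $\Sigma' := \{\, F\alpha \mid \alpha \in \Sigma \,\}$ consists of morphisms between finitely generated projective right $A'$-modules, so Theorem \ref{Theorem10} produces a universal localisation $f' := f'_{\Sigma'} \colon A' \to A'_{\Sigma'} =: B'$. This is the candidate; by construction it is a universal localisation, so the only remaining task is to check that $B'$ is Morita equivalent to $B$.

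For that, let $\mathcal{X}_B \subseteq \mathsf{Mod-}A$ and $\mathcal{X}_{B'} \subseteq \mathsf{Mod-}A'$ be the bireflective subcategories corresponding to $f$ and $f'$ via Theorem \ref{Theorem5}, equivalently the essential images of the restriction functors $f^*$ and $(f')^*$, which are fully faithful by Theorem \ref{Theorem4}. By the last sentence of Theorem \ref{Theorem10}, $M$ lies in $\mathcal{X}_B$ exactly when $\mathrm{Hom}_A(\alpha, M)$ is invertible for all $\alpha \in \Sigma$, and similarly for $\mathcal{X}_{B'}$ with $\Sigma'$. Since $F$ is an equivalence it gives a natural isomorphism $\mathrm{Hom}_A(\alpha, M) \cong \mathrm{Hom}_{A'}(F\alpha, FM)$ and it preserves and reflects invertibility, so $M \in \mathcal{X}_B$ if and only if $FM \in \mathcal{X}_{B'}$; applying the same to a quasi-inverse of $F$, I would conclude that $F$ restricts to an equivalence $\mathcal{X}_B \simeq \mathcal{X}_{B'}$. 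Composing with the equivalences $\mathsf{Mod-}B \simeq \mathcal{X}_B$ and $\mathsf{Mod-}B' \simeq \mathcal{X}_{B'}$ then gives $\mathsf{Mod-}B \simeq \mathsf{Mod-}B'$, that is, $B$ and $B'$ are Morita equivalent.

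I expect the main obstacle to be the identification $F(\mathcal{X}_B) = \mathcal{X}_{B'}$, and more precisely the point that one cannot simply quote Corollary \ref{Corollary7}: being a ring epimorphism with a prescribed bireflective subcategory does not by itself guarantee that the epimorphism is a universal localisation, so the set $\Sigma$ genuinely has to be carried along explicitly, and the $\mathrm{Hom}$-description in Theorem \ref{Theorem10} is exactly what glues the transported set $\Sigma'$ to the transported subcategory $F(\mathcal{X}_B)$. A minor supporting point that also needs a line of justification is that equivalences send finitely generated projectives to finitely generated projectives, so that $\Sigma'$ is a legitimate input for the universal localisation construction.
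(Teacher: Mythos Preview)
Your proof is correct and follows essentially the same strategy as the paper: transport $\Sigma$ along the Morita equivalence $F$ to a set $\Sigma' = F(\Sigma)$ of maps between finitely generated projectives over $A'$, and then use the $\mathrm{Hom}$-theoretic description of $A_\Sigma$-modules from Theorem~\ref{Theorem10} to match up the bireflective subcategories. The only cosmetic difference is the order of the two verifications: the paper first invokes Corollary~\ref{Corollary7} to obtain a ring epimorphism $f'\colon A'\to B'$ with $B'$ Morita equivalent to $B$ and then checks that this $f'$ is the universal $F(\Sigma)$-inverting map, whereas you first build $B' := A'_{\Sigma'}$ as a universal localisation and then verify Morita equivalence directly via $F(\mathcal{X}_B)=\mathcal{X}_{B'}$; your route is slightly more self-contained since it does not need Corollary~\ref{Corollary7} as input.
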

	\begin{proof}
	Let $B = A_\Sigma$ for some $\Sigma$, and let $F: \mathsf{Mod-}A \to \mathsf{Mod-}A'$ be an equivalence of categories. As $F$ sends finitely generated projectives in $\mathsf{Mod-}A$ to finitely generated projectives in $\mathsf{Mod-}A$, $F(\Sigma)$ is also a set of maps between finitely generated projectives. Let $f': A' \to B'$ be the ring epimorphism by Corollary \ref{Corollary9} such that $B'$ is Morita equivalent to $A_\Sigma$. Since $F$ restricted between $\mathcal{X}_{A_\Sigma}$ and $\mathcal{X}_{B'}$ is an equivalence, then $f'$ is $F(\Sigma)-$inverting. The fact that $f'$ is universal $F(\Sigma)-$inverting is then proved by going back to $\mathsf{Mod-}A$ by equivalence $F$ and using that $f$ is universal $\Sigma-$inverting. Consequently, $B' = A'_{F(\Sigma)}$.
	\end{proof}
	
	\begin{lemma}\label{Lemma15}
		Let $A \subseteq B$ be rings, and $\Sigma$ be a set of maps between finitely generated projectives over $A$. Denote $\Sigma' = \{\alpha \otimes_A B \,|\, \alpha \in \Sigma\}$; this is a set of maps between finitely generated projectives over $B$. If a $B-$module $N$ can be viewed as a restriction of a $B_{\Sigma'}-$module, then $N_A$ is a restriction of a $A_{\Sigma}-$module.
	\end{lemma}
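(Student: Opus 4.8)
The plan is to reduce everything to the description of restrictions of universal localisation modules furnished by Theorem \ref{Theorem10}. By that theorem, a module $M \in \mathsf{Mod-}A$ is a restriction of an $A_\Sigma$-module precisely when $\mathrm{Hom}_A(\alpha, M)$ is invertible for every $\alpha \in \Sigma$, and similarly $N \in \mathsf{Mod-}B$ is a restriction of a $B_{\Sigma'}$-module precisely when $\mathrm{Hom}_B(\beta, N)$ is invertible for every $\beta \in \Sigma'$. One first notes that $\Sigma'$ genuinely consists of maps between finitely generated projective right $B$-modules, since $P \otimes_A B$ is finitely generated projective over $B$ whenever $P$ is such over $A$; this is what makes $B_{\Sigma'}$, and hence the statement, meaningful. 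So it suffices to show: if $\mathrm{Hom}_B(\alpha \otimes_A B, N)$ is invertible for all $\alpha \in \Sigma$, then $\mathrm{Hom}_A(\alpha, N_A)$ is invertible for all $\alpha \in \Sigma$, where $N_A$ denotes the restriction of $N$ along the inclusion $f \colon A \hookrightarrow B$.

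I would then fix $\alpha \colon P \to Q$ in $\Sigma$ and invoke the hom--tensor adjunction $f_! \dashv f^*$ from the change-of-rings discussion, which supplies an isomorphism $\mathrm{Hom}_B(M \otimes_A B, N) \cong \mathrm{Hom}_A(M, N_A)$ natural in $M$. Applying naturality to the morphism $\alpha$ yields a commutative square whose horizontal arrows are the adjunction isomorphisms attached to $P$ and to $Q$, and whose vertical arrows are $\mathrm{Hom}_B(\alpha \otimes_A B, N)$ on the left and $\mathrm{Hom}_A(\alpha, N_A)$ on the right. Since $N$ is a restriction of a $B_{\Sigma'}$-module, the left vertical arrow is invertible; in a commutative square three of whose arrows are isomorphisms the fourth is one too, so $\mathrm{Hom}_A(\alpha, N_A)$ is invertible. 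As $\alpha \in \Sigma$ was arbitrary, Theorem \ref{Theorem10} gives that $N_A$ is a restriction of an $A_\Sigma$-module.

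I expect no real obstacle here; the argument is essentially a bookkeeping exercise with the adjunction. The only things to be careful about are the two points already flagged — that $\alpha \otimes_A B$ lies between finitely generated projectives, so that both $B_{\Sigma'}$ and the cited characterisation make sense, and that the restriction of a $B_{\Sigma'}$-module to $\mathsf{Mod-}A$ is the same whether performed directly or factored through $\mathsf{Mod-}B$, which is what licenses applying the $A \hookrightarrow B$ adjunction to $N$ in the first place.
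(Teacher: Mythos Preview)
Your proposal is correct and follows essentially the same route as the paper's proof: both invoke the characterisation of restrictions from Theorem~\ref{Theorem10} and then use the naturality of the adjunction isomorphism $\mathrm{Hom}_B(P \otimes_A B, N) \cong \mathrm{Hom}_A(P, N_A)$ to transfer invertibility of $\mathrm{Hom}_B(\alpha \otimes_A B, N)$ to that of $\mathrm{Hom}_A(\alpha, N_A)$. Your write-up is slightly more explicit about the commutative square and the side conditions, but there is no substantive difference in strategy.
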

	\begin{proof}
	By Theorem \ref{Theorem10}, $N$ can be viewed as a restriction of a $B_{\Sigma'}-$module if $\mathrm{Hom}_B(\alpha', N)$ is an isomorphism for all $\alpha' \in \Sigma'$. Write $\alpha'$ as $\alpha \otimes_A B: P_1 \otimes_A B \to P_2 \otimes_A B$ for $P_i$ finitely generated projectives over $A$.
	
	The adjuction of induced module and restriction of scalars functors yields functorial isomorphisms:
	$$\mathrm{Hom}_B(P_i \otimes_A B, N) \cong \mathrm{Hom}_A(P_i, N_A)$$
	Provided that $\mathrm{Hom}_B(\alpha', N)$ is an isomorphism, the map $\mathrm{Hom}_A(\alpha, N_A)$ needs to be an isomorphism as well.
	
	Thus, if $\mathrm{Hom}_B(\alpha', N)$ is an isomorphism for all $\alpha' \in \Sigma'$ (and $N$ is a restriction of a $B_{\Sigma'}-$module), then $\mathrm{Hom}_A(\alpha, N_A)$ is an isomorphism for all $\alpha \in \Sigma$ (and $N_A$ is a restriction of a $A_{\Sigma}-$module).
	\end{proof}
	
	\begin{lemma}\label{Lemma16}
	Let $\Sigma$ be a set of morphisms between finitely generated projective right $A-$modules and $I \subseteq A$ be an ideal of $A$. Then $A_\Sigma/\langle I \rangle_{A_\Sigma}$ and $(A/I)_{\Sigma \otimes A/I}$ are isomorphic as rings where $\Sigma \otimes A/I = \{\alpha \otimes_A A/I \,|\, \alpha \in \Sigma\}$
	\end{lemma}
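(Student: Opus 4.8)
The plan is to show that both rings corepresent the same functor on the category of rings, namely the functor sending a ring $B$ to the set of ring homomorphisms $\phi\colon A\to B$ that are $\Sigma$-inverting and satisfy $\phi(I)=0$; uniqueness of corepresenting objects then produces a canonical isomorphism. Write $f_\Sigma\colon A\to A_\Sigma$ for the universal localisation, $\pi\colon A_\Sigma\to A_\Sigma/\langle I\rangle_{A_\Sigma}$ for the projection, $q\colon A\to A/I$ for the quotient map, and $g\colon A/I\to (A/I)_{\Sigma\otimes A/I}$ for the universal localisation of $A/I$ at $\Sigma\otimes A/I$.

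First I would record the base-change identity. For any ring homomorphism $\phi\colon A\to B$ with $\phi(I)=0$, factoring as $\phi=\bar\phi q$ with $\bar\phi\colon A/I\to B$, and any finitely generated projective right $A$-module $P$, there is an isomorphism $P\otimes_A B\cong (P\otimes_A A/I)\otimes_{A/I} B$ natural in $P$. Consequently, for $\alpha\in\Sigma$ the map $\alpha\otimes_A B$ is invertible if and only if $(\alpha\otimes_A A/I)\otimes_{A/I} B$ is; in particular $\Sigma\otimes A/I$ genuinely consists of maps between finitely generated projective right $A/I$-modules (so $(A/I)_{\Sigma\otimes A/I}$ is defined by Theorem \ref{Theorem10}), and $\phi$ is $\Sigma$-inverting precisely when $\bar\phi$ is $(\Sigma\otimes A/I)$-inverting.

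Next, using the two universal properties from Theorem \ref{Theorem10}, I would set up the two bijections. A ring homomorphism $A_\Sigma/\langle I\rangle_{A_\Sigma}\to B$ corresponds, by composition with $\pi$, to a ring homomorphism $A_\Sigma\to B$ annihilating the two-sided ideal $\langle I\rangle_{A_\Sigma}$, i.e.\ annihilating the image of $I$; by the universal $\Sigma$-inverting property this is the same as a $\Sigma$-inverting homomorphism $\phi\colon A\to B$ with $\phi(I)=0$. On the other side, a ring homomorphism $(A/I)_{\Sigma\otimes A/I}\to B$ corresponds, by composition with $g$, to a $(\Sigma\otimes A/I)$-inverting homomorphism $A/I\to B$, hence (composing with the surjection $q$, and invoking the previous paragraph) exactly to a homomorphism $\phi\colon A\to B$ with $\phi(I)=0$ that is $\Sigma$-inverting. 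The two descriptions agree, and both are natural in $B$, so $A_\Sigma/\langle I\rangle_{A_\Sigma}$ and $(A/I)_{\Sigma\otimes A/I}$ corepresent the same functor; the Yoneda argument then gives a unique ring isomorphism between them commuting with the structure maps from $A$ (namely $\pi f_\Sigma$ and $g q$).

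The only genuine content is the naturality of the base-change isomorphism $P\otimes_A B\cong (P\otimes_A A/I)\otimes_{A/I} B$ together with the elementary observation that a homomorphism out of $A_\Sigma$ kills $\langle I\rangle_{A_\Sigma}$ if and only if the composite from $A$ kills $I$; both are routine, so I do not expect a substantial obstacle, only bookkeeping.
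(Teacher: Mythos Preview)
Your proof is correct and takes essentially the same approach as the paper: the paper shows that both rings are pushouts of the diagram $A/I \leftarrow A \rightarrow A_\Sigma$, which is equivalent to your statement that both corepresent the functor $B \mapsto \{\phi\colon A\to B \mid \phi \text{ is } \Sigma\text{-inverting and } \phi(I)=0\}$. Your Yoneda packaging is a bit cleaner and makes the base-change step $P\otimes_A B \cong (P\otimes_A A/I)\otimes_{A/I} B$ explicit, whereas the paper verifies the pushout universal property by hand and leaves that step implicit, but the underlying argument is the same.
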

	\begin{proof}
	We prove this by showing that both $A'_1 = A_\Sigma/\langle I \rangle_{A_\Sigma}$ and $A'_2=(A/I)_{\Sigma \otimes A/I}$ are pushouts of the following diagram: $A/I \leftarrow A \rightarrow A_\Sigma$; hence they are isomorphic as rings.
	
	We begin our discussion with $A'_1$. Naturally, we have a projection $\pi_{\langle I \rangle_{A_\Sigma}}: A_\Sigma \to A'_1$. If we denote $f_\Sigma: A \to A_\Sigma$ the universal localisation, we have a map $\pi_{\langle I \rangle_{A_\Sigma}} \circ f_\Sigma: A \to A'_1$. Clearly, this map maps elements of $I$ to zero. By the homomorphism theorem, there exists a unique map $g: A/I \to A'_1$ such that $g \circ \pi_I = \pi_{\langle I \rangle_{A_\Sigma}} \circ f_\Sigma$. Suppose we have two maps $h_1: A/I \to B$ and $h_2: A_\Sigma \to B$ such that $h_1 \circ \pi_I = h_2 \circ f_\Sigma$. Since $h_2 \circ f_\Sigma$ maps all elements of $I$ to zero, $h_2$ needs to map all elements of $\langle I \rangle_{A_\Sigma}$ to zero. This gives us, by the homomorphism theorem, that there is a unique homomorphism $h'$ such that $h_2 = h' \circ \pi_{\langle I \rangle_{A_\Sigma}}$. Now, we have two ring homomorphisms $h_1$ and $h' \circ g$ from $A/I$ to $B$; they agree on $A$, since $h_1 \circ \pi_I = h_2 \circ f_\Sigma = (h' \circ \pi_{\langle I \rangle_{A_\Sigma}}) \circ f_\Sigma = (h' \circ g) \circ \pi_I$. As $\pi_I$ is a ring epimorphism, $h_1$ and $h' \circ g$ need to be equal. The uniqueness of $h'$ is warranted by the universal property of the quotient $A_\Sigma/\langle I \rangle_{A_\Sigma}$.

	Second, let us have $A'_2$. We have a universal localisation $f_{\Sigma \otimes_A A/I}: A/I \to A'_2$; we proceed in an analogous manner. Given that $f_{\Sigma \otimes_A A/I}$ is $\Sigma-$inverting, by Theorem \ref{Theorem8} stating the universal property of universal localisation, there exists a unique ring homomorphism $g: A_\Sigma \to A'_2$ such that $g \circ f_\Sigma = f_{\Sigma \otimes_A A/I} \circ \pi_I$. Assume we have two maps $h_1: A/I \to B$ and $h_2: A_\Sigma \to B$ such that $h_1 \circ \pi_I = h_2 \circ f_\Sigma$. This gives us that $h_1$ is $\Sigma \otimes_A A/I-$inverting yielding a unique ring homomorphism $h': A'_2 \to B$ arising from its universal property as a universal localisation such that $h_1 = h' \circ f_{\Sigma \otimes_A A/I}$. Again, we have two maps $h_2$ and $h' \circ g$ from $A_\Sigma$ to $B$. However, as there two homomorphism agree when precomposed with $f_\Sigma$, the fact that $f_\Sigma$ is a ring epimorphisms gives us that, indeed, $h' \circ g = h_2$.
	\end{proof}

	\begin{remark}
	Provided that $A$ is an $R-$algebra, $A_\Sigma/\langle I \rangle_{A_\Sigma}$ and $(A/I)_{\Sigma \otimes_A A/I}$ are isomorphic as $R-$algebras.	
	\end{remark}
	
	\section{Ring epimorphisms from path algebras to matrix rings}
	
	In this section, we discuss ring epimorphisms, and more specifically universal localisations, from a finite-dimensional path algebra of quiver $Q$ over $k$, an algebraically closed field, to matrices over algebras over $k$, particularly of type $k\langle x_1, \dots, x_n \rangle$.
	
	This section is structured as follows. At first, we recall some preliminary results. Subsequently, we look at an example of a ring epimorphism (universal localisation) in question arising from bricks (exceptional modules) over path algebra $kQ$ and their simple extensions. Inspired by these examples of constructing ring epimorphisms and universal localisations from smaller to larger path algebras, we provide generalizations of them in several directions.
	
	The quiver $Q$ is a finite acyclic quiver $Q = (Q_0, Q_1, t, h)$. For brevity, we denote the corresponding path algebra $A = kQ$ in the first two subsections. We note that $A$ is a finite dimensional hereditary algebra and that $\mathsf{Rep}_k(Q)$ and $\mathsf{Mod-}A$ (or respectively, $\mathsf{rep}_k(Q)$ and $\mathsf{mod-}A$) are naturally equivalent. For foundations of the theory of representations of quivers and finite-dimensional algebras, we refer the reader to \cite{assem2006elements}.

	\subsection{Preliminaries}
	
	In this section, we frequently work with matrix rings $M_n(R)$ over a given ring $R$. For this end, we establish some notation. We set:
	$$e_{ij} = \left\{\begin{array}{cl} 1 & \mbox{at position } ij; \\ 0 & \mbox{elsewhere.} \end{array}\right.$$
	Recall that $e_{ij} e_{k \ell} = \delta_{jk} \cdot e_{i \ell}$ and that $e_{ii} A  e_{jj} = a_{ij} \cdot e_{ij}$ for any $A \in M_n(R)$.

	\begin{definition}[\cite{cohn2006free}]\label{Definition17}
		Given a ring $R$, we say that:
		\begin{enumerate}[(i)]
			\item $R$ is projective-free if every finitely generated projective module is free and of unique rank	;
			\item $R$ is a free ideal ring (fir) if all one-sided ideals of $R$ are free as $R-$modules;
			\item $R$ is a semifir if all finitely generated one-sided ideals of $R$ are free as $R-$modules.
		\end{enumerate}
	\end{definition}

	\begin{remark}
	 All semifirs are projective-free by Corollary 2.3.4 in \cite{cohn2006free}.	
	\end{remark}
	
	\begin{remark}
	For any $n$, the free associative algebra $k\langle x_1, \dots, x_n \rangle$ is a fir. See Corollary 2.5.2 in \cite{cohn2006free} and ensuing discussion therein.
	\end{remark}
	
	Using these notions and results, we formulate, and prove some auxiliary results for this section.
	
	\begin{lemma}\label{Lemma18}
		Let $S$ be a projective-free ring, and $\varphi: R \to M_n(S)$ be a ring homomorphism. Given a full set of orthogonal idempotents $e_1, \dots, e_m \in R$, there exists an automorphism $\psi$ of $S$ such that, for each $i = 1, \dots, m$, the image $\psi \varphi(e_i)$ is equal to:
		$$\psi \varphi(e_i) = \sum_{j} e_{jj}.$$
	\end{lemma}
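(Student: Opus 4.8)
The plan is to translate the statement into module language: realise $M_n(S)$ as $\mathrm{End}_S(V)$, where $V=S^n$ is the free right $S$-module of rank $n$ (column vectors, with matrices acting by left multiplication), use projective-freeness of $S$ to pin down the images of the $e_i$ up to $S$-isomorphism, and then glue a single change-of-basis unit. The resulting $\psi$ is the inner automorphism given by conjugation by that unit; accordingly, the displayed ``$\psi\varphi(e_i)=\sum_j e_{jj}$'' has to be read as $\psi\varphi(e_i)=\sum_{j\in J_i}e_{jj}$ for a partition $\{1,\dots,n\}=J_1\sqcup\cdots\sqcup J_m$ into consecutive intervals. Note that $R$ itself plays no role beyond supplying the idempotents.

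First I would set $f_i:=\varphi(e_i)$; since $\varphi$ is unital, $f_1,\dots,f_m$ are orthogonal idempotents of $M_n(S)$ with $\sum_i f_i=1$. Under $M_n(S)\cong\mathrm{End}_S(V)$ these give an internal direct sum decomposition $V=\bigoplus_{i=1}^m f_iV$ of right $S$-modules, and each $f_iV$, being a direct summand of the free module $V$, is finitely generated projective over $S$. Since $S$ is projective-free, $f_iV$ is free of a uniquely determined rank $r_i\ge 0$; comparing with $V\cong S^n$ and using that finitely generated free $S$-modules have invariant rank, I get $r_1+\cdots+r_m=n$. I then partition $\{1,\dots,n\}$ into consecutive blocks $J_1,\dots,J_m$ with $|J_i|=r_i$ and set $\epsilon_i:=\sum_{j\in J_i}e_{jj}$, so that $\epsilon_1,\dots,\epsilon_m$ are orthogonal idempotents of $M_n(S)$ summing to $1$ with $\epsilon_iV\cong S^{r_i}\cong f_iV$ as $S$-modules.

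Next I would choose $S$-linear isomorphisms $\theta_i\colon f_iV\xrightarrow{\,\sim\,}\epsilon_iV$ and let $u:=\bigoplus_i\theta_i$, viewed as an $S$-linear endomorphism of $V$ using $V=\bigoplus_i f_iV$ on the source and $V=\bigoplus_i\epsilon_iV$ on the target. Then $u$ is bijective, hence $u\in\mathrm{Aut}_S(V)=M_n(S)^\times$. A one-line check shows $uf_iu^{-1}=\epsilon_i$: if $v\in\epsilon_iV$ then $u^{-1}v\in f_iV$, so $uf_iu^{-1}v=u(u^{-1}v)=v$, while if $v\in\epsilon_jV$ with $j\ne i$ then $u^{-1}v\in f_jV$ and $f_iu^{-1}v=0$; since $V=\bigoplus_j\epsilon_jV$ this pins down $uf_iu^{-1}$. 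Taking $\psi:=u(-)u^{-1}$, an automorphism of $M_n(S)$, yields $\psi\varphi(e_i)=uf_iu^{-1}=\epsilon_i=\sum_{j\in J_i}e_{jj}$.

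The only step needing genuine attention is the rank normalisation: one must use ``projective-free'' in full --- its ``free'' clause to get $f_iV\cong S^{r_i}$, and its ``unique rank'' (invariant basis number) clause to force $\sum_i r_i=n$, which is exactly what makes the target diagonal idempotents $\epsilon_i$ available inside $M_n(S)$ and summing to the identity; the gluing in the previous paragraph is then routine bookkeeping in $\mathsf{Mod-}S$. One could instead run the argument through the Morita equivalence $\mathsf{Mod-}M_n(S)\simeq\mathsf{Mod-}S$ applied to the right ideals $f_iM_n(S)$, but the direct version above is shorter.
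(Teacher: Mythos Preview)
Your proof is correct and follows essentially the same approach as the paper: both use projective-freeness to recognise each $\varphi(e_i)S^n$ as a free summand of well-defined rank $r_i$ with $\sum_i r_i = n$, and then build an invertible change-of-basis whose associated inner automorphism of $M_n(S)$ sends each $\varphi(e_i)$ to a block-diagonal idempotent. The paper carries this out with explicit injection/projection matrices $A_i, B_i$ (satisfying $A_iB_i=\varphi(e_i)$ and $B_iA_i=I_{r_i}$) assembled into mutually inverse block matrices, while you phrase the same construction module-theoretically via $u=\bigoplus_i\theta_i$; the content is identical, and your reading of the intended $\psi$ as an automorphism of $M_n(S)$ and of $\sum_j e_{jj}$ as $\sum_{j\in J_i}e_{jj}$ matches the paper's computation.
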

	\begin{proof}
		If $e_1, \dots, e_m \in R$ form a full set of orthogonal idempotents, then their imagages, $\varphi(e_1), \dots, \varphi(e_m) \in M_n(S)$, also form such a set. Every $\varphi(e_i)$ yields a direct summand of $S^n$; since $S$ is projective-free, each such summand is a free $S-$module of a unique rank $r_i$. We also have $A_i$ an $n \times r_i$ matrix over $S$ and $B_i$ an $r_i \times n$ matrix over $S$ such that $A_iB_i = \varphi(e_i)$ and $B_iA_i = I_{r_i}$, the $r_i \times r_i$ identity matrix. The matrix $A_i$ gives the injection from $S^{r_i}$ to $S^n$; whereas the matrix $B_i$ represents the projection from $S^n$ to $S^{r_i}$.
		
		As the idempotents $\varphi(e_1), \dots, \varphi(e_m) \in M_n(S)$ are orthogonal, we obtain that $B_j A_i = 0$ for $i \neq j$. This yields that:
		$$\left(\!\! \begin{array}{c} B_1 \\ \dots \\ B_m \end{array} \!\!\right) \left(\!\! \begin{array}{lcr} A_1 & \!\! \dots \!\! & A_m \end{array} \!\!\right) = I_n.$$
		The identity matrix $I_n$ should be thought of a block diagonal matrix with identity matrices $I_{r_1}, \dots, I_{r_m}$ on the diagonal. As $S^n$ is projective, the image of $B^T = \left(\!\! \begin{array}{lcr} B_1 & \!\! \dots \!\! & B_m \end{array} \!\!\right)^T$ splits; we obtain that $S^n \cong P \oplus S^n$. Since $S$ is projective-free, the module $P$ needs to be free; however, due to unique rank of $S^n$ the module $S^n$ is zero. Thus, the matrices $B$ and $A = \left(\!\! \begin{array}{lcr} A_1 & \!\! \dots \!\! & A_m \end{array} \!\!\right)$ give mutually an $S-$automorphisms of $S^n$.
				
		Therefore, we may set $\psi$ to be the inner automorphism of $M_n(S)$ given by congujation by $A^{-1}$. We calculate $\psi \varphi(e_i)$ as follows:
		$$\psi \varphi(e_i) = A^{-1} \varphi(e_i) A = \left(\!\! \begin{array}{c} B_1 \\ \dots \\ B_m \end{array} \!\!\right) A_i B_i \left(\!\! \begin{array}{lcr} A_1 & \!\! \dots \!\! & A_m \end{array} \!\!\right) = \left(\!\! \begin{array}{lcr} 0 & 0 & 0 \\ 0 & I_{r_i} & 0 \\ 0 & 0 & 0 \end{array} \!\!\right)$$
		Where the matrix on the right is to be understood as a block matrix.
	\end{proof}
	
	\begin{remark}
	Suppose $f: A \to M_m(k\langle x_1, \dots, x_n \rangle)$ is a ring homomorphism. Then, by the lemma above, there exists a dimension vector $\alpha$ such that for any finite-dimensional module $N$ over $M_m(k\langle x_1, \dots, x_n \rangle)$, the dimension vector of $N_A$ is a multiple of $\alpha$. The values of $\alpha$ are given	by unique ranks of free $k\langle x_1, \dots, x_n \rangle-$modules given by images orthogonal idempotents $f(e_v)$ for $v \in Q_0$.
	
	Hence, we can write $f: A \to M_\alpha(k\langle x_1, \dots, x_n \rangle)$ for a ring homomorphism from $A$ to matrices over $k\langle x_1, \dots, x_n \rangle$ such that the $A-$modules that lie in the image of $f^*$ have a multiple of $\alpha$ as their dimension vector.
		
	One could also establish this fact by means of algebraic geometry. We note that, for any finite-dimensional module $N$ over the algebra $M_m(k\langle x_1, \dots, x_n \rangle)$, we have that $\mathrm{dim}_k N = \ell m$ for some $\ell \geq 0$. Also, up to isomorphism, any such $N$ is parameterized by a choice of $X_1, \dots, X_n \in M_\ell(k)$ that $g_{X_1, \dots, X_n}: M_m(k\langle x_1, \dots, x_n \rangle) \to M_{\ell m}(k)$ maps $g_N(U)_{ij} = U_{ij}(X_1, \dots, X_n)$ for all $U \in M_m(k\langle x_1, \dots, x_n \rangle)$. The restriction $f^*$ is represented by $g_{X_1, \dots, X_n} \circ f$ in this setting.

	This means that each representation of $M_m(k\langle x_1, \dots, x_n \rangle)$ can be thought of as a point in $\mathbb{A}_{k}^{n\cdot \ell^2}$ representing the choice of $n$ matrices $\ell \times \ell$ over $k$. Now, for each $v \in Q$, the set of points of $(Y_1, \dots, Y_n) \in \mathbb{A}_{k}^{n\cdot \ell^2}$ such that $g_{Y_1, \dots, Y_n}[f(e_v)]$ has maximal rank $\alpha_v$ is dense open. Naturally, the set of points $\mathbb{A}_{k}^{n\cdot \ell^2}$ at which each of $f(e_v)$ are simultaneously mapped to a matrix with maximum possible rank $\alpha_v$ is dense open. This means that $\sum_{v \in Q_0} \alpha_v = \ell m$, as $N = \bigoplus_{v \in Q_0} g_N f(e_v)N$ as a vector space and that there no points in $\mathbb{A}_{k}^{n\cdot \ell^2}$ with any $f(e_v)$ mapped to a matrix of rank smaller than $\alpha_v$ as they would need to be of lower dimension than $\ell m$.
	\end{remark}
	
	\subsection{Simple examples}
	In this subsection, we present some simple results on ring epimorphisms and universal localisations from path algebras to matrix algebras over $k\langle x_1, \dots, x_n \rangle$. Later in this text, we will strive to find generalizations of these results.
	\begin{proposition}\label{Proposition19}
	Let $B \in \mathsf{mod-}A$	be a module, then the following are equivalent:
	\begin{enumerate}[(i)]
		\item $B$ is a brick, i.e. $\mathrm{End}_{A}(B) \cong k$, $($\!and $\mathrm{Ext}_{A}^1 (B,B)$ is zero$)$,
		\item The ring homomorphism $f_B: A \to \mathrm{End}_k(B)$ is a ring epimorphism $($\!universal localisation$)$.
	\end{enumerate}
\end{proposition}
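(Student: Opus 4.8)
The plan is to translate both conditions into statements about the module category of the target $S:=\mathrm{End}_k(B)$ and then feed them into the results of Section~1. Since $B$ is finite-dimensional, $S\cong M_n(k)$ with $n=\dim_k B$, so $S$ is a simple Artinian $k$-algebra; thus $\mathsf{Mod-}S$ is semisimple with a single simple object $U$ of $k$-dimension $n$, and $S$ is finite-dimensional over $k$, hence a finitely generated $A$-module via $f_B$. The restriction-of-scalars functor $f_B^*$ sends $U$ to $B$, so every finitely generated $S$-module restricts to a finite direct sum $B^{(r)}$; additivity of $\mathrm{Hom}$ and $\mathrm{Ext}^1$ in both arguments, via $\mathrm{Hom}_A(B^m,B^\ell)\cong M_{\ell\times m}(\mathrm{End}_A B)$ and $\mathrm{Ext}^1_A(B^m,B^\ell)\cong M_{\ell\times m}(\mathrm{Ext}^1_A(B,B))$ and likewise over $S$, then reduces every computation below to the single module $B$.

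First I would establish the equivalence (i)$\iff$(ii) without the parentheses using Corollary~\ref{Corollary9}: as $f_B$ makes $S$ a finitely generated $A$-module, $f_B$ is a ring epimorphism if and only if $\mathrm{Hom}_A(M,N)=\mathrm{Hom}_S(M,N)$ for all finitely generated $M,N\in\mathsf{Mod-}S$, which by the reduction above amounts to $\mathrm{Hom}_A(B,B)=\mathrm{Hom}_S(B,B)$. Now $\mathrm{Hom}_S(B,B)=\mathrm{End}_S(U)=k$ since $S\cong M_n(k)$, and $k=\mathrm{Hom}_S(B,B)\subseteq\mathrm{Hom}_A(B,B)=\mathrm{End}_A(B)$ because scalars are $S$-linear and $S$-linear maps are $A$-linear; hence the two $\mathrm{Hom}$-spaces coincide exactly when $\mathrm{End}_A(B)=k$, i.e. exactly when $B$ is a brick.

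For the parenthetical statement I would use that $A=kQ$ is hereditary. If $f_B$ is a universal localisation it is in particular a ring epimorphism, so $B$ is a brick by the previous step; and by Theorem~\ref{Theorem11} a universal localisation satisfies $\mathrm{Tor}_1^A(S,S)=0$, whence $\mathrm{Ext}^1_A=\mathrm{Ext}^1_S$ on $\mathsf{Mod-}S$, and since $S$ is semisimple this forces $\mathrm{Ext}^1_A(B,B)=\mathrm{Ext}^1_S(B,B)=0$. Conversely, if $B$ is a brick with $\mathrm{Ext}^1_A(B,B)=0$, then $f_B$ is a ring epimorphism by the previous step and, by the reduction, $\mathrm{Ext}^1_A$ vanishes on all finitely generated $S$-modules, so it agrees there with $\mathrm{Ext}^1_S$ (which is identically zero on $\mathsf{Mod-}S$); since $A$ is Noetherian and $S$ is a finitely generated $A$-module, the Remark following Theorem~\ref{Theorem11} yields $\mathrm{Tor}_1^A(S,S)=0$. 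Because $A$ is hereditary, $\mathrm{Tor}_n^A(S,S)=0$ for all $n\geq 1$, so $f_B$ is a homological ring epimorphism, and Theorem~\ref{Theorem13} upgrades it to a universal localisation.

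The identification $S\cong M_n(k)$, the fact that $f_B^*(U)\cong B$ (the only care needed is with the left/right module convention, which is harmless since $M_n(k)^{\mathrm{op}}\cong M_n(k)$), and the additivity reductions are routine. The one genuinely delicate point is the passage, in the converse of the parenthetical statement, from the vanishing of $\mathrm{Ext}^1_A(B,B)$ to $\mathrm{Tor}_1^A(S,S)=0$: Theorem~\ref{Theorem11} as stated compares $\mathrm{Ext}^1_A$ and $\mathrm{Ext}^1_S$ on all of $\mathsf{Mod-}S$, whereas a priori we can only check the vanishing on finitely generated $S$-modules, so it is essential to invoke the finite-presentation refinement in the Remark after Theorem~\ref{Theorem11}, which applies precisely because $A$ is Noetherian and $S$ is finite-dimensional over $k$.
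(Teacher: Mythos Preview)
Your proof is correct and follows essentially the same route as the paper: reduce via Corollary~\ref{Corollary9} to finitely generated $S$-modules, use that these are all finite powers of $B$, and then invoke the Remark after Theorem~\ref{Theorem11} together with Theorem~\ref{Theorem13} for the universal-localisation upgrade. Your write-up is in fact slightly more explicit than the paper's in isolating the delicate step (passing from $\mathrm{Ext}^1$ on finitely generated modules to $\mathrm{Tor}_1^A(S,S)=0$) and in flagging the left/right convention for $f_B^*(U)\cong B$.
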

\begin{proof}
	$(i)$ $\Rightarrow$ $(ii)$: Given $B \in \mathsf{mod-}A$ a brick over $A$, we have a natural ring homomorphism $f_B: A \to \mathrm{End}_k(B)$ giving $B_k$ as a vector space the structure of an $A-$module. Denote $\Lambda = \mathrm{End}_k(B)$. In accordance with Theorem \ref{Theorem4}, we prove that $(f_B)^*$ is fully faithful. By Corollary \ref{Corollary9}, as $f_B$ makes $\Lambda$ a finitely generated module over $A$, it suffices to show that $(f_B)^*$ is fully faithful on finite-dimensional modules.

	Clearly, the essential image of $(f_B)^*$ on finitely generated $\Lambda-$modules are only direct sums of finitely many copies of $B$ up to isomorphism. Let $I, J$ be finite sets, then we have:
	$$\mathrm{Hom}_{A}(B^I, B^J) = \prod_{I, J} \mathrm{Hom}_{A}(B,B) = \prod_{I, J} \mathrm{Hom}_{\Lambda}(B,B) = \mathrm{Hom}_{\Lambda}(B^I, B^J).$$
	Therefore, $f_B: A \to \Lambda$ is a ring epimorphism. If it is also given that $\mathrm{Ext}_{A}^1 (B,B) = 0$, then by Remark after Theorem \ref{Theorem11} (as $A$ is right Noetherian and $\Lambda$ is a finitely generated module over $A$) we obtain that $\mathrm{Tor}^{A}_1(\Lambda, \Lambda) = 0$ (all higher $\mathrm{Tor}$ groups vanish since $A$ is hereditary). This makes $f_B$ a homological ring epimorphism, and, therefore, a universal localisation by Theorem \ref{Theorem13}.\\\\
	$(ii)$ $\Rightarrow$ $(i)$: Clearly, $B$ is a brick over $\Lambda$, and, thus, over $A$ since $f_B$ is a ring epimorphism. Provided that $f_B$ is, furthermore, a universal localisation, then $\mathrm{Ext}^1_{A}(B, B) = \mathrm{Ext}^1_\Lambda(B, B) = 0$ by Theorem \ref{Theorem11} and since all non-trivial $\mathrm{Ext}$ groups vanish for $\Lambda$. 
\end{proof}

\begin{theorem}\label{Theorem20}
	Let $B \in \mathsf{mod-}A$ be an exceptional module of dimension vector $\alpha$; $Q'$ be a quiver such that $Q_0 = Q'_0$ and $Q_1 \subseteq Q'_1$, and $A' = kQ'$. Set $n = \sum_{e \in Q'_1 - Q_1} \alpha_{s(e)}\alpha_{t(e)}.$ There exists a universal localisation $f'_B: A' \to M_{\alpha}(k\langle x_1, \dots, x_n \rangle)$ that restricts to the universal localisation $f_B: A \to \mathrm{End}_k(B)$.
\end{theorem}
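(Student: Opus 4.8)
The plan is to write down $f'_B$ explicitly as an extension of $f_B$, to verify that it is a ring epimorphism by means of the categorical criterion (Theorem~\ref{Theorem4}), and finally to upgrade it to a universal localisation using that $A'=kQ'$ is hereditary, via Theorems~\ref{Theorem11} and~\ref{Theorem13}. First I would fix a basis of $B$ refining its decomposition $B=\bigoplus_{v\in Q_0}B_v$ into vertex components ($\dim_k B_v=\alpha_v$), thereby identifying $\mathrm{End}_k(B)$ with $M_\alpha(k)$ so that $f_B(e_v)=\sum_{j}e_{jj}$ over the indices belonging to $v$ and each arrow $a$ of $Q$ acts by $f_B(a)$, a matrix supported on the block joining its endpoints. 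For each new edge $e\in Q'_1-Q_1$ I would introduce $\alpha_{s(e)}\alpha_{t(e)}$ fresh indeterminates and let $f'_B(e)$ be the generic matrix with these entries, placed in the block joining the endpoints of $e$ and zero elsewhere; since $\sum_{e\in Q'_1-Q_1}\alpha_{s(e)}\alpha_{t(e)}=n$, this consumes the variables $x_1, \dots, x_n$, each once. Keeping $f_B$ on the subalgebra $kQ\subseteq kQ'$, these assignments determine a $k$-algebra homomorphism $f'_B\colon A'\to M_\alpha(k\langle x_1, \dots, x_n \rangle)$ which by construction extends $f_B$ along $kQ\hookrightarrow kQ'$ and $\mathrm{End}_k(B)\hookrightarrow M_\alpha(k\langle x_1, \dots, x_n \rangle)$; dually, reducing $f'_B$ modulo the ideal generated by the images of $Q'_1-Q_1$ recovers $f_B$, consistently with Lemma~\ref{Lemma16}.

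To see that $f'_B$ is a ring epimorphism I would use Theorem~\ref{Theorem4} and the Morita equivalence $M_\alpha(k\langle x_1, \dots, x_n \rangle)\sim k\langle x_1, \dots, x_n \rangle$. Under it, the restriction functor $(f'_B)^*$ becomes the functor sending a (right) $k\langle x_1, \dots, x_n \rangle$-module $(M;X_1,\dots,X_n)$ to the representation of $Q'$ with vertex spaces $B_v\otimes_k M$, the arrows of $Q$ acting through their $B$-actions tensored with $\mathrm{id}_M$, and each new arrow acting by its generic matrix with the $X_i$ substituted; in particular the underlying $kQ$-module is $B\otimes_k M$. Given two such modules, a morphism of $Q'$-representations between $(f'_B)^*(M;\vec X)$ and $(f'_B)^*(M';\vec X')$ amounts to a $k$-linear map $g\colon M\to M'$ with $gX_i=X_i'g$ for every $i$: because $B$ is a brick and finitely presented, $\mathrm{Hom}_{kQ}(B\otimes_k M,B\otimes_k M')\cong\mathrm{Hom}_k(M,M')$ (via the $k$--$kQ$ adjunction and the fact that $\mathrm{Hom}_{kQ}(B,-)$ commutes with direct limits), which forces the $Q$-part of the morphism to be $\mathrm{id}_B\otimes g$; and then commutation with the new arrows is, entry by entry, precisely the relations $gX_i=X_i'g$. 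Hence $(f'_B)^*$ is fully faithful and $f'_B$ is a ring epimorphism.

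For the universal-localisation statement, observe that the essential image $\mathcal{X}$ of $(f'_B)^*$ is exactly the class of representations $R$ of $Q'$ whose underlying $kQ$-module is a direct sum of copies of $B$ (an arbitrary choice of linear maps for the new arrows being permitted). Since $A'$ is hereditary, by Theorems~\ref{Theorem11} and~\ref{Theorem13} it suffices to show that $\mathcal{X}$ is closed under extensions in $\mathsf{Mod-}A'$: given $0\to R'\to R\to R''\to0$ with $R',R''\in\mathcal{X}$, restricting to $kQ$ gives a short exact sequence of $kQ$-modules whose outer terms are direct sums of copies of $B$, and this sequence splits because $\mathrm{Ext}^1_{kQ}(R''|_{kQ},R'|_{kQ})=0$ — here one uses that $B$ is exceptional, so $\mathrm{Ext}^1_{kQ}(B,B)=0$, that $B$ is finitely presented, and that $kQ$ is hereditary, so $\mathrm{Ext}^1_{kQ}(B,-)$ commutes with arbitrary coproducts. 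Therefore $R|_{kQ}$ is again a direct sum of copies of $B$, so $R\in\mathcal{X}$, whence $f'_B$ is a homological ring epimorphism and so a universal localisation.

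The step I expect to be the main obstacle is the last one: identifying the essential image $\mathcal{X}$ accurately and proving it is closed under extensions — equivalently, establishing the vanishing $\mathrm{Tor}^{A'}_1\!\big(M_\alpha(k\langle x_1, \dots, x_n \rangle),M_\alpha(k\langle x_1, \dots, x_n \rangle)\big)=0$ — is exactly the point at which one must invoke that $B$ has no self-extensions rather than merely being a brick. The Hom-computation of the second paragraph, while essentially formal, also has to be set up with some care so that the indeterminates attached to distinct new edges decouple and the identification with $\mathrm{Hom}_{k\langle x_1, \dots, x_n \rangle}$ comes out cleanly.
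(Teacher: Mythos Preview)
Your argument is correct, and it follows a genuinely different route from the paper's. The explicit construction of $f'_B$ is the same, but both the epimorphism step and the localisation step are handled differently.

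For the ring-epimorphism part, the paper uses the dominion criterion (Theorem~\ref{Theorem3}): since $f_B$ is already an epimorphism, $M_\alpha(k)$ lies in the dominion of $f'_B(A')$, and then one checks by an elementary matrix computation that each scalar matrix $x_\ell I$ lies in the dominion as well. You instead verify full faithfulness of $(f'_B)^*$ directly by computing $\mathrm{Hom}$ via the brick property of $B$ and then reading off the commutation relations with the new indeterminates. Both work; the dominion argument is shorter and more ring-theoretic, while yours makes the module-category picture explicit from the start.

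For the universal-localisation part, the paper does \emph{not} go through $\mathrm{Tor}$-vanishing. It takes the set $\Sigma$ for which $f_B=\ell_\Sigma$ (coming from Proposition~\ref{Proposition19}), forms $\Sigma'=\{\sigma\otimes_A A'\mid\sigma\in\Sigma\}$, and uses Lemma~\ref{Lemma15} to identify the $A'_{\Sigma'}$-modules as exactly those $A'$-modules whose restriction to $A$ is an $A_\Sigma$-module; it then checks that this class coincides with the essential image of $(f'_B)^*$, so $f'_B$ \emph{is} the universal localisation at $\Sigma'$. Your route --- extension-closure of the essential image, hence $\mathrm{Ext}^1_{A'}=\mathrm{Ext}^1_S$ on $S$-modules, hence $\mathrm{Tor}_1^{A'}(S,S)=0$ by Theorem~\ref{Theorem11}, hence universal localisation by Theorem~\ref{Theorem13} since $A'$ is hereditary --- is equally valid. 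Note that the step ``extension-closed $\Rightarrow$ $\mathrm{Ext}^1$ agree'' is not literally one of the quoted theorems: you are using that for a ring epimorphism the restriction functor is fully faithful, so once the middle term of an $A'$-extension lies in $\mathcal{X}$ the maps are automatically $S$-maps, and conversely an $S$-extension that splits over $A'$ splits over $S$. This is standard and your sketch covers it.

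What each approach buys: the paper's argument produces an explicit $\Sigma'$ (the induced maps), which is exactly what is reused later in Theorem~\ref{TExtendingLocalisation} and Proposition~\ref{PExtendingExceptional}. Your argument never needs to know $\Sigma$ at all --- it uses only that $B$ is exceptional --- and it isolates precisely where the hypothesis $\mathrm{Ext}^1_{kQ}(B,B)=0$ enters, namely in the splitting of the restricted short exact sequence.
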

\begin{proof}
	Let denote $f_B$ the universal localisation of $A$ in $\Sigma$, and set $\Sigma' = \{\sigma \otimes_A A': P \otimes_A A' \to Q \otimes_A A'\,|\,\sigma: P \to Q \in \Sigma\}$. Modules over the universal localisation $A'_{\Sigma'}$ among modules over $A$ are those for which $\mathrm{Hom}_{A'}(\sigma \otimes_A A', M)$ are invertible for all $\sigma \otimes_A A' \in \Sigma'$. 
	
	Using Lemma \ref{Lemma15}, we obtain that for $\sigma \in \Sigma$, $\mathrm{Hom}_{A'}(\sigma \otimes_A A', M)$ is invertible if and only if $\mathrm{Hom}_A(\sigma, M_A)$ is invertible. Naturally, $M_A$ is the image of $M$ under the restriction of scalars functor induced by the inclusion $A \subseteq A'$.
	
	From Proposition \ref{Proposition19}, we have a universal localisation $f_B: A \to M_{\alpha}(k)$. By change of basis, we may assume that $f_B(e_v)$ maps to a matrix with ones at positions $(n_v, n_v), (n_v + 1, n_v + 1), \dots, (n_v + \alpha_v, n_v + \alpha_v)$ and zeros elsewhere. We now define a $k-$algebra homomorphism from $f'_B: A' \to M_{\alpha}(k\langle x_1, \dots, x_n \rangle)$ as extension of $f_B$. It suffices to define the image of $e \in Q'_1 - Q_1$; it is defined as follows: $$[f'_B(e)]_{ij} = \left\{ \begin{array}{ll} x_{ij, e} & n_{s(e)} \leq i \leq n_{s(e)} + \alpha_{s(e)}, n_{s(e)} \leq j \leq n_{s(e)} + \alpha_{s(e)}\\ 0 & \mbox{otherwise} \end{array} \right.$$
	We observe that $f'_B(e_{s(e)}) \cdot f'_B(e) \cdot f'_B(e_{s(e)})$ for all $e \in Q'_1 - Q_1$. Due to the universal property of path algebras (Theorem 1.8 in chapter II in \cite{assem2006elements}), $f'$ is a $k-$algebra homomorphism.
	Since $f_B$ is a ring epimorphisms, the dominion $D$ of $f'_B(A')$ in $M_{\alpha}(k\langle x_1, \dots, x_n \rangle)$ contains $M_{\alpha}(k)$. To prove that $D$ equals $M_{\alpha}(k\langle x_1, \dots, x_n \rangle)$, it suffices to show that $x_\ell \cdot I_\alpha \in D$ for all $1 \leq \ell \leq n$. For any $\ell$, $x_\ell$ corresponds to $x_{i_\ell j_\ell, e_\ell}$ for some $1 \leq i_\ell, j_\ell \leq \alpha$ and $e_\ell \in Q'_1 - Q_1$. We obtain $$\left(\!\! \begin{array}{ccc} x_\ell & &\\ & \ddots & \\  & & x_\ell \end{array} \!\!\right) = \sum_{i = 1}^n e_{i, i_\ell} \cdot f'_B(e_\ell) \cdot e_{j_\ell, i},$$ which lies in $D$ as it is a subring. Matrices $e_{ij}$ belong to $D$ and so does $f'_B(e_\ell)$.
	
	Finally, we show that all $A'_{\Sigma'}-$ modules are actually $M_{\alpha}(k\langle x_1, \dots, x_n \rangle)$ modules via $f'_B$. Any module $M$ that is an $A'_{\Sigma'}-$module is $M_\alpha(k)-$module via the universal localisation $f_B$. Therefore as a representation, $M$ is isomorphic to $[(k^{(I)})^{\alpha_v}, \varphi_e]_{v \in Q_0, e \in Q'_1}$ where $\varphi_e \in \mathrm{Hom}((k^{(I)})^{\alpha_s(e)}, (k^{(I)})^{\alpha_s(e)})$. We can think of it as a $n_{s(e)} \times n_{s(e)}$ matrix with entries in $\mathrm{End}_k(k^{(I)})$. Naturally, $[(k^{(I)})^{\alpha_v}, \varphi_e]_{v \in Q_0, e \in Q'_1}$ is a $M_{\alpha}(k\langle x_1, \dots, x_n \rangle)-$module as $x_\ell$ acts as the $(i_\ell - n_{t(e_\ell)}, j_\ell - n_{s(e_\ell)})$ entry of $\varphi_{e_\ell}$.
\end{proof}

\begin{remark}
The construction of a universal localisation $A' \to M_{n}(k)$ based on a universal localisations $A \to M_n{k}$ arising from a brick $B$ with no self-extension performed in the theorem above can be carried out in a similar manner even if $B$ has self-extensions. However, the maps are only ring epimorphisms, not universal localisations in such a case.
\end{remark}

\subsection{Extending ring epimorphisms}
In this subsection, we aim to generalize Theorem \ref{Theorem20} by constructing ring epimorphisms, without requiring that they be universal localisations, from path algebras to matrices over $k\langle x_1, \dots, x_n\rangle$ using a well-suited representations thereof.

\begin{lemma}\label{LAdditiveFunctorFull}
Let $F: \mathsf{A} \to \mathsf{B}$ be an additive functor between additive categories, then $F$ is full if and only if for every $A \in \mathsf{A}$ the group homomorphism $F_{A,A}: \mathrm{Hom}_{\mathsf{A}}(A,A) \to \mathrm{Hom}_{\mathsf{B}}(F(A),F(A))$ is surjective.
\end{lemma}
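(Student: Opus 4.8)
The forward implication needs no work: if $F$ is full then $F_{A,B}$ is surjective for all objects $A,B$, in particular for $A=B$. So the content is the converse, and the plan is the standard ``block matrix'' argument, exploiting the fact that an additive functor preserves finite biproducts together with their structure morphisms.

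The first step I would carry out is to make that preservation precise. In an additive category, morphisms $\iota_A\colon A\to C$, $\iota_B\colon B\to C$, $\pi_A\colon C\to A$, $\pi_B\colon C\to B$ exhibit $C$ as a biproduct $A\oplus B$ exactly when $\pi_A\iota_A=\mathrm{id}_A$, $\pi_B\iota_B=\mathrm{id}_B$, $\pi_A\iota_B=0$, $\pi_B\iota_A=0$ and $\iota_A\pi_A+\iota_B\pi_B=\mathrm{id}_C$. Since $F$ is additive it preserves composition, identities, the zero morphism and sums of morphisms, so applying $F$ to these identities shows that $F(\iota_A),F(\iota_B),F(\pi_A),F(\pi_B)$ exhibit $F(C)$ as a biproduct of $F(A)$ and $F(B)$. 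Hence I may identify $F(A\oplus B)$ with $F(A)\oplus F(B)$ so that $F(\iota_A)=\iota_{F(A)}$, $F(\iota_B)=\iota_{F(B)}$, $F(\pi_A)=\pi_{F(A)}$, $F(\pi_B)=\pi_{F(B)}$.

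Now, given an arbitrary $g\in\mathrm{Hom}_{\mathsf{B}}(F(A),F(B))$, form the ``off-diagonal'' endomorphism $\widetilde g:=\iota_{F(B)}\circ g\circ\pi_{F(A)}\in\mathrm{Hom}_{\mathsf{B}}(F(A\oplus B),F(A\oplus B))$. By the hypothesis applied to the object $A\oplus B$, the map $F_{A\oplus B,A\oplus B}$ is surjective, so there is $h\in\mathrm{Hom}_{\mathsf{A}}(A\oplus B,A\oplus B)$ with $F(h)=\widetilde g$. Set $f:=\pi_B\circ h\circ\iota_A\colon A\to B$. Using the identifications above together with $\pi_{F(A)}\iota_{F(A)}=\mathrm{id}_{F(A)}$ and $\pi_{F(B)}\iota_{F(B)}=\mathrm{id}_{F(B)}$,
\[
F(f)=F(\pi_B)\circ F(h)\circ F(\iota_A)=\pi_{F(B)}\circ\widetilde g\circ\iota_{F(A)}=\pi_{F(B)}\circ\iota_{F(B)}\circ g\circ\pi_{F(A)}\circ\iota_{F(A)}=g.
\]
Thus $g$ lies in the image of $F_{A,B}$; since $g$, and then $A,B$, were arbitrary, $F$ is full.

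The only delicate point — the ``main obstacle'', such as it is — is the first step: one must be certain that an additive functor sends a biproduct to a biproduct \emph{compatibly with the projections and injections}, not merely up to some unspecified isomorphism, since that compatibility is precisely what makes the block-matrix manipulation in the last step legitimate. The equational characterization of biproducts in an additive category reduces this to the bare definition of additivity, so it is routine, but it is the hinge of the argument.
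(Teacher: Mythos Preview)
Your proof is correct and follows essentially the same approach as the paper's: embed the given morphism as an off-diagonal block in an endomorphism of $F(A\oplus B)$, lift it via the hypothesis, and extract the relevant block. You spell out more carefully why an additive functor preserves the biproduct structure maps, which the paper only alludes to, but the argument is the same.
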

\begin{proof} Suppose that $F_{A,A}$ is surjective for every $A \in \mathsf{A}$, and let us have $f' \in \mathrm{Hom}_{\mathsf{B}}(F(A'),F(B'))$ for some $A', B' \in \mathsf{A}$.

Consider the biproduct $A' \oplus B'$ in $\mathsf{A}$ with morphisms $\iota_{A'}: A' \to A' \oplus B', \pi_{A'}: A' \oplus B' \to A'$ and $\iota_{B'}, \pi_{B'}$ similarly.

From $f'$, we obtain $F(\iota_{B'}) \circ f' \circ F(\pi_{A'}): F(A') \oplus F(B') \to F(A') \oplus F(B')$ in $\mathsf{B}$. Since $F$ is additive, $F(A') \oplus F(B') \cong F(A' \oplus B')$. Our assumptions gives us that there is $f: A' \oplus B' \to A' \oplus B'$ such that $F(f) = F(\iota_{B'}) \circ f' \circ F(\pi_{A'})$. It is easy to see that $F(\pi_{B'}) \circ F(f) \circ F(\iota_{A'}) = f'$. Therefore, $F$ is full.
\end{proof}

\begin{theorem}\label{TEpimorphismIdeal}
Let $f: A \to M_n(B)$ be a homomorphism of $k-$algebras. Then $f$ is a ring epimorphism if and only if the ideal $I_f$ of $B \ast_k k\langle (v_{ij})_{i,j=1}^n \rangle$ generated by $(v_{ij})_{i,j=1}^n \cdot f(a) - f(a) \cdot (v_{ij})_{i,j=1}^n$ for $a \in A$ understood element-wise contains $v_{ii} - v_{jj}$, $v_{i'j'}$, and $b v_{ii} - v_{ii} b$ for all $i,j,i' \neq j' \in \{1, \dots, n\}$ and $b \in B$.
\end{theorem}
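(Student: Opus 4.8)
\section*{Proof proposal for Theorem \ref{TEpimorphismIdeal}}

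The plan is to pass to the ring $C := B \ast_k k\langle (v_{ij})_{i,j=1}^n\rangle$, graded by total degree in the $v_{ij}$, and to reduce the statement to a computation in degree one. Writing $V = (v_{ij})$, the matrix entries of $Vf(a) - f(a)V$ are homogeneous of degree one, so $I_f$ is a graded ideal; consequently a homogeneous element of degree one lies in $I_f$ if and only if it lies in the degree-one component $(I_f)_1$. Since $v_{ii} - v_{jj}$, $v_{i'j'}$ ($i' \neq j'$) and $bv_{ii} - v_{ii}b$ are all homogeneous of degree one, the theorem becomes a statement purely about the $B$-bimodule $C_1 / (I_f)_1$ together with three distinguished families of elements in it, and the rest of the argument identifies this bimodule.

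First I would identify $C_1/(I_f)_1$ concretely. As a $k$-module, $C_1 = \bigoplus_{i,j=1}^n B \otimes_k B$, with $b \otimes b'$ in the $(i,j)$ summand corresponding to $bv_{ij}b'$. Expanding $b\,(Vf(a) - f(a)V)_{pq}\,b' = \sum_k bv_{pk}f(a)_{kq}b' - \sum_k bf(a)_{pk}v_{kq}b'$ and comparing term by term with the defining relations of a tensor product over $A$, one verifies that the assignment $bv_{ij}b' \mapsto (be_i) \otimes_A (b'e_j)$ --- where $e_i$ denotes the $i$-th standard row vector, $e_j$ the $j$-th standard column vector, and $M_{1\times n}(B)$, $M_{n\times 1}(B)$ carry the $A$-actions induced from $M_n(B)$ via $f$ --- induces an isomorphism of $B$-bimodules $C_1/(I_f)_1 \cong M_{1\times n}(B) \otimes_A M_{n\times 1}(B)$. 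Establishing this isomorphism, i.e.\ checking that $(I_f)_1$ is carried exactly onto the kernel of the canonical surjection $M_{1\times n}(B) \otimes_k M_{n\times 1}(B) \twoheadrightarrow M_{1\times n}(B) \otimes_A M_{n\times 1}(B)$, is the most computational point of the proof and the main obstacle; but it is only a matter of matching two presentations.

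Next I would connect this to the standard epimorphism criterion. Consider the canonical $B$-bimodule map $\bar\mu \colon M_{1\times n}(B) \otimes_A M_{n\times 1}(B) \to B$, $w \otimes v \mapsto wv$, obtained by coarsening $\otimes_A$ to $\otimes_{M_n(B)}$ and using the Morita isomorphism $M_{1\times n}(B) \otimes_{M_n(B)} M_{n\times 1}(B) \cong B$. Under the Morita equivalence $N \mapsto M_{n\times 1}(B) \otimes_B N \otimes_B M_{1\times n}(B)$ from $B$-bimodules to $M_n(B)$-bimodules, $\bar\mu$ corresponds to the multiplication map $\mu \colon M_n(B) \otimes_A M_n(B) \to M_n(B)$. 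By Theorem \ref{Theorem3}, $f$ is a ring epimorphism if and only if $\mu$ is an isomorphism, hence if and only if $\bar\mu$ is an isomorphism; and since $\bar\mu(e_1 \otimes e_1) = 1$ and $\bar\mu$ is a bimodule map, $\bar\mu$ is surjective, so this holds if and only if $\ker \bar\mu = 0$.

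It remains to pin down $\ker\bar\mu$. The images of the three families all lie in $\ker\bar\mu$ by inspection ($1 - 1 = 0$, $0$, and $b - b = 0$). For the converse inclusion, let $J$ be the two-sided ideal of $C$ generated by $v_{ii}-v_{jj}$, $v_{i'j'}$ ($i'\neq j'$) and $bv_{ii}-v_{ii}b$; then $C/J \cong B[z]$, a polynomial ring over $B$ in a central variable $z$ (the common image of the $v_{ii}$). The projection $C \to C/J$ sends $V$ to the central scalar $zI_n$, which commutes with $f(A)$, so $I_f \subseteq J$; hence $(I_f)_1 \subseteq (J)_1$ and $\bigl(C_1/(I_f)_1\bigr)/J' \cong C_1/(J)_1 = (C/J)_1 = Bz \cong B$, where $J'$ is the sub-$B$-bimodule of $C_1/(I_f)_1$ generated by the images of the three families, and one checks this isomorphism is induced by $\bar\mu$. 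Thus $\ker\bar\mu = J'$, and putting everything together yields: $f$ is a ring epimorphism $\iff$ $\ker\bar\mu = 0$ $\iff$ $J' = 0$ $\iff$ the three families vanish in $C_1/(I_f)_1$ $\iff$ they lie in $(I_f)_1$ $\iff$ they lie in $I_f$, which is the assertion. (For the implication "$f$ epimorphism $\Rightarrow$ the elements lie in $I_f$" one can alternatively argue directly from the dominion description in Theorem \ref{Theorem3}, by showing that in $M_n(C/I_f)$ the image of $V$ commutes with the entire dominion of $f(A)$, hence with all matrix units and all scalar matrices over $B$, forcing $V$ to be a central scalar there.)
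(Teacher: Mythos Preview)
Your proof is correct and takes a genuinely different route from the paper's. The paper argues via Theorem~\ref{Theorem4} (full faithfulness of restriction): for $(\Leftarrow)$ it writes an arbitrary $M_n(B)$-module as $W^n$ for a $B$-module $W$, observes that any $A$-endomorphism $(w_{ij})$ of $W^n$ induces a ring map $B\ast_k k\langle v_{ij}\rangle \to \mathrm{End}_k(W)$ killing $I_f$, and then reads off from the hypothesis on $I_f$ that $(w_{ij})$ is a $B$-scalar; for $(\Rightarrow)$ it runs this backwards on the specific module $(C/I_f)^n$ with its tautological $A$-endomorphism $(\mu(v_{ij}))$ and evaluates at $1$. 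You instead work through Theorem~\ref{Theorem3}: grading $C$ by $v$-degree reduces everything to degree one, where you identify $C_1/(I_f)_1 \cong M_{1\times n}(B)\otimes_A M_{n\times 1}(B)$ and, via Morita, match the multiplication map $\bar\mu$ to $M_n(B)\otimes_A M_n(B)\to M_n(B)$; the three families are then shown to generate exactly $\ker\bar\mu$. The paper's argument is shorter and needs no auxiliary structure beyond Lemma~\ref{LAdditiveFunctorFull}. Your argument requires more setup (the grading, the bimodule identification, the Morita transfer) but in return pinpoints the obstruction to $f$ being an epimorphism as a concrete $B$-bimodule and shows the three families are precisely its generators; this viewpoint would be the natural one if one wanted to measure the failure of the epimorphism condition or to extend the result to other targets. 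Your parenthetical dominion argument for the forward implication also works, e.g.\ via the two maps $x\mapsto x$ and $x\mapsto x+t[V,x]$ into $M_n(C/I_f)[t]/(t^2)$.
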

\begin{proof}
$(\Leftarrow)$ Suppose that $I_f$ of $B \ast_k k\langle (v_{ij})_{i,j=1}^n \rangle$ contains $v_{ii} - v_{jj}$, $v_{i'j'}$, and $b v_{ii} - v_{ii} b$ for all $i,j,i' \neq j' \in \{1, \dots, n\}$ and $b \in B$. We will show that every $A-$endomorphism of a right $M_n(B)-$module is already an $M_n(B)-$endomorphism. Combining this with Theorem \ref{Theorem4} and Lemma \ref{LAdditiveFunctorFull} above will give us that $f$ is a ring epimorphism.

Any $M_n(B)-$module can be obtained as follows: given a vector space $W$ over $k$ and a homomorphism $\mu: B \to \mathrm{End}_k (W)$, we define a right $M_n(B)-$module structure on $W^n$ such that $b \in B$ act diagonally and $e_{ij}$ via itself. The structure of an $A-$module on $W^n$ is induced by restriction of scalars per $f: A \to M_n(B)$.

Suppose $(w_{ij})_{i,j=1}^n$ is an $A-$endomorphism of $W^n$; this means that for all $a \in A$ we have $(w_{ij})_{i,j=1}^n \mu(f(a)) - \mu(f(a)) (w_{ij})_{i,j=1}^n = 0$. There is a unique homomorphism from $B \ast_k k\langle (v_{ij})_{i,j=1}^n \rangle$ to $\mathrm{End}_k (V)$ defined by $\mu$ on $B$ and by $v_{ij} \mapsto w_{ij}$ on $k\langle (v_{ij})_{i,j=1}^n \rangle$. Moreover, since $(w_{ij})_{i,j=1}^n$ is an $A-$endomorphism of $W^n$, this homomorphism factors through $B \ast_k k\langle (v_{ij})_{i,j=1}^n \rangle / I_f$. As $v_{ii} - v_{jj}, v_{i'j'} \in I_f$ and $b v_{ii} - v_{ii} b \in I_f$ for all $i,j,i' \neq j' \in \{1, \dots, n\}$ and $b \in B$, $(w_{ij})_{i,j=1}^n$ clearly is also an $M_n(B)-$endomorphism.\\\\
$(\Rightarrow)$ Assume now that $f: A \to M_n(B)$ is an epimorphism. Denote $V = B \ast_k k\langle (v_{ij})_{i,j=1}^n \rangle / I_f$ as vector space over $k$. Consider a homomorphism $B \to \mathrm{End}_k (V), b \mapsto \mu(b)$, where $\mu(b)$ is the linear map of on $V$ such that $v \mapsto v \cdot b$; this gives a right $M_n(B)-$module structure on $V^n$. By definition, $(\mu(v_{ij}))_{i,j=1}^n$ is a well-defined endomorphism of $V^n$ as a right $A-$module.

Because $f$ is a ring epimorphism, $(\mu(v_{ij}))_{i,j=1}^n$ is also an endomorphism of $V^n$ as a right $M_n(B)-$module. This means that $\mu(v_{ii}) - \mu(v_{jj}) = 0$, $\mu(v_{i'j'}) = 0$, and $\mu(v_{ii})\mu(b) - \mu(b) \mu(v_{ii}) = 0$ for all $i,j,i' \neq j' \in \{1, \dots, n\}$ and $b \in B$. Applying these maps to $1 \in V$ yields the claim.
\end{proof}
The following proposition may be seen as a generalization of Proposition \ref{Proposition19} above, and it will be used in a subsequent proof of a generalization of Theorem \ref{Theorem20} is a similar way too.
\begin{proposition}\label{PIdealRelations}
Suppose that $A$ is finite-dimensional algebra over $k$ and $M \in \mathsf{mod}-A$ of dimension $n$. The $A-$module structure on $M$ is given by a homomorphism $f: A \to M_n(k)$; this homomorphisms gives us $I_f$ an ideal of $k \ast_k k\langle (v_{ij})_{i,j = 1}^n \rangle \cong k\langle (v_{ij})_{i,j = 1}^n \rangle$. If $\sum_{i,j = 1}^n \lambda_{ij} m_{ij} = 0$ for all $(m_{ij})_{i,j = 1}^n$ $A-$endomor- phisms of $M_A \cong k^n_A$, via $f$, and $\lambda_{ij} \in k$, then $\sum_{i,j = 1}^n \lambda_{ij} v_{ij} \in I_f$.
\end{proposition}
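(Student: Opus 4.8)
The plan is to exploit the natural grading on $R := k\langle (v_{ij})_{i,j=1}^n\rangle$ by total degree in the generators, together with the observation that $I_f$ is a \emph{homogeneous} ideal generated in degree $1$. First I would recall that, by the construction in Theorem~\ref{TEpimorphismIdeal}, $I_f$ is generated by the entries of $Vf(a)-f(a)V$ with $V=(v_{ij})$ and $a$ ranging over $A$ (equivalently, over a $k$-basis of $A$); since the $(i,j)$-entry of $Vf(a)-f(a)V$ is $\sum_k v_{ik}f(a)_{kj}-\sum_k f(a)_{ik}v_{kj}$, which is $k$-linear in the $v_{pq}$, the ideal $I_f$ is indeed homogeneous and generated in degree $1$. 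Consequently its degree-$1$ component $(I_f)_1\subseteq R_1$ is exactly the $k$-span of these generators, and the degree-$1$ element $\sum_{i,j}\lambda_{ij}v_{ij}$ lies in $I_f$ if and only if it already lies in $(I_f)_1$.

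Next I would identify $R_1=\bigoplus_{i,j}kv_{ij}$ with $M_n(k)$ via $v_{ij}\mapsto e_{ij}$ and equip $M_n(k)$ with the nondegenerate bilinear form $\langle C,D\rangle=\sum_{i,j}C_{ij}D_{ij}$. Writing $C^{(a,i,j)}\in M_n(k)$ for the matrix corresponding to the generator $(Vf(a)-f(a)V)_{ij}$, whose $(p,q)$-entry is $\delta_{pi}f(a)_{qj}-\delta_{qj}f(a)_{ip}$, a one-line expansion gives $\langle C^{(a,i,j)},D\rangle=(Df(a))_{ij}-(f(a)D)_{ij}$ for every $D\in M_n(k)$. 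Hence $D$ is orthogonal to $(I_f)_1=\mathrm{span}_k\{C^{(a,i,j)}\}$ precisely when $Df(a)=f(a)D$ for all $a\in A$, i.e.\ precisely when $D$, regarded as a matrix, is an $A$-endomorphism of $M_A\cong k^n_A$. Thus $(I_f)_1^{\perp}=\mathrm{End}_A(M)$, and taking orthogonal complements in the finite-dimensional space $M_n(k)$ yields $(I_f)_1=\mathrm{End}_A(M)^{\perp}$.

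Finally, the hypothesis says exactly that $\sum_{i,j}\lambda_{ij}m_{ij}=\langle(\lambda_{ij}),(m_{ij})\rangle=0$ for every $(m_{ij})\in\mathrm{End}_A(M)$, i.e.\ that the matrix $(\lambda_{ij})$ lies in $\mathrm{End}_A(M)^{\perp}=(I_f)_1\subseteq I_f$; this is exactly the assertion $\sum_{i,j}\lambda_{ij}v_{ij}\in I_f$.

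I do not anticipate a genuine difficulty here; the only points that require care are verifying that $I_f$ really is homogeneous (so that passing to the degree-$1$ truncation is legitimate) and keeping the left/right module conventions consistent so that the centraliser of $f(A)$ in $M_n(k)$ is identified with $\mathrm{End}_A(M)$. One might also remark that this computation is the linear-algebra shadow of the fact that $R/I_f$ corepresents the functor $T\mapsto\{X\in M_n(T)\mid Xf(a)=f(a)X\text{ for all }a\in A\}$, with $(\bar v_{ij})$ the universal such endomorphism; but the graded argument above is shorter and self-contained.
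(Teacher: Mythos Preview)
Your argument is correct and takes a genuinely different route from the paper's. You exploit the grading on $R=k\langle v_{ij}\rangle$: since the generators of $I_f$ are homogeneous of degree $1$, membership of the degree-$1$ element $\sum\lambda_{ij}v_{ij}$ in $I_f$ reduces to membership in the finite-dimensional space $(I_f)_1$, where a direct orthogonality computation in $M_n(k)$ (with the standard nondegenerate form) identifies $(I_f)_1=\mathrm{End}_A(M)^{\perp}$ and finishes the job. The paper instead argues via the \emph{universal} $A$-endomorphism: it forms the $A$-module $(R/I_f)^n\cong M^{(L)}$ (with $L$ a $k$-basis of $R/I_f$), checks---using that $M$ is finitely presented, so that $\mathrm{Hom}_A(M^{(L)},M^{(L)})\cong\prod_L\bigoplus_L\mathrm{End}_A(M)$---that the linear relation $\sum\lambda_{ij}m_{ij}=0$ propagates from $\mathrm{End}_A(M)$ to all $A$-endomorphisms of $M^{(L)}$, then applies this to the endomorphism $(\mu(v_{ij}))$ given by right multiplication by the $v_{ij}$ and evaluates at $1\in R/I_f$. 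Your approach is shorter and stays entirely in finite-dimensional linear algebra, never touching the infinite-dimensional object $R/I_f$; the paper's approach is more conceptual and, by making the universal-endomorphism role of $R/I_f$ explicit, dovetails directly with the proof of Theorem~\ref{TEpimorphismIdeal} and the general $B\neq k$ setting there. Your closing remark already anticipates this connection.
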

\begin{proof}
Let $k^{(L)}$ for a set $I$ be a vector space over $k$. We define the $A-$module structure on $(k^{(L)})^n$ using $f: A \to M_n(k)$. Actually, as an $A-$module, $(k^{(L)})^n \cong (k^n_A)^{(L)}$ with $k^n_A \cong M$. Since $M$ is a finitely presented $A-$module, we have that:
$$\mathrm{Hom}_{A}(\bigoplus_{\ell \in L} M_\ell, \bigoplus_{\ell \in L} M_\ell) \cong \prod_{\ell_1 \in L} \bigoplus_{\ell_2 \in L} \mathrm{Hom}_{A}(M_{\ell_1}, M_{\ell_2})$$
Suppose we have an $A-$endomorphism of $(k^{(L)})^n$ given by $(M_{ij})_{i,j = 1}^n$ for $M_{ij}: k^{(L)} \to k^{(L)}$. Every $M_{ij} = ((m_{ij}^{\ell_1 \ell_2})_{\ell_2 \in L})_{\ell_1 \in L}$ such that for any $\ell_1 \in L$ only finitely many $m_{ij}^{\ell_1 \ell_2}$ are non-zero and for every $\ell_1, \ell_2 \in L$, $(m_{ij}^{\ell_1 \ell_2})_{i,j = 1}^n$ form an endomorphism of $M$ as an $A-$module.
If $\sum_{i,j = 1}^n \lambda_{ij} m_{ij} = 0$ holds for all $(m_{ij})_{i,j = 1}^n$ $A-$endomorphisms of $M$, clearly, $\sum_{i,j = 1}^n \lambda_{ij} M_{ij} = 0$ holds as well.
The claim follows by applying this observation to $k\langle (v_{ij})_{i,j = 1}^n \rangle/I_f$ as an $A-$module and its $A-$endomorphism $(\mu(v_{ij}))_{i,j = 1}^n$ , where $(\mu(v_{ij}))_{i,j = 1}^n$ is the multiplication by $v_{ij}$ from the right.
\end{proof}
\begin{theorem}\label{TEpirmorphismInvariant}
Let us have a finite acyclic quiver $Q'$ and a subquiver $Q$ with $Q_0 = Q'_0$ and $Q_1 \cup \{e'\} = Q'_1$. Suppose there is a representation $M' = (M_v, \varphi_{\alpha})_{v \in Q'_0, \varphi \in Q'_1}$ of $Q'$ over $k$ of dimension $n$ such that is brick and $\varphi_{e'}$ is not of full rank. Consider the representation $M = (M_v, \varphi_{\alpha})_{v \in Q_0, \varphi \in Q_1}$ of $Q$ over $k$. Denote $M'_{s(e')}$ and $M'_{t(e')}$ such that $\mathrm{Ker}\,\varphi_{e'} \oplus M'_{s(e')} = M_{s(e')}$ and $\mathrm{Im}\,\varphi_{e'} \oplus M'_{t(e')} = M_{t(e')}$, respectively. We write $\varphi_{e'}: \mathrm{Ker}\,\varphi_{e'} \oplus M'_{s(e')} \to \mathrm{Im}\,\varphi_{e'} \oplus M'_{t(e')} = M_{t(e')}$ as follows:
$$\varphi_{e'} = \left(\begin{array}{cc} 0 & \varphi \\ 0 & 0 \end{array}\right)$$
Provided that $\mathrm{Ker}\,\varphi_{e'}$ and $\mathrm{Im}\,\varphi_{e'}$ are invariant under all endomorphisms of $M$, there exists a ring epimorphism $f: kQ' \to M_n(k\langle X \rangle)$ such that on $kQ$ this coincides with $kQ \to M_n(k)$ giving the action of $kQ$ on $M$ with:
$$s(e') \mapsto \left(\begin{array}{ccc} I_{s(e')} & 0 & 0 \\ 0 & 0 & 0 \\ 0 & 0 & 0 \end{array} \right) \, \mathrm{and} \, t(e') \mapsto \left( \begin{array}{ccc} 0 & 0 & 0 \\ 0 & I_{t(e')} & 0 \\ 0 & 0 & 0 \end{array} \right)$$
where $I_{s(e')}$ is $\mathrm{dim}_k \,M_{s(e')} \times \mathrm{dim}_k \,M_{s(e')}$ identity matrix and $I_{t(e')}$ similarly, and:
$$e' \mapsto \left( \begin{array}{ccc} 0 & 0 & 0 \\ f_{e'} & 0 & 0 \\ 0 & 0 & 0 \end{array} \right).$$
The matrix $f_{e'}$ is given as follows:
\begin{enumerate}[(i)]
    \item $$f_{e'} = \left(\begin{array}{cc} 0 & \varphi \\ X_{21} & 0 \end{array}\right)$$
    \item if moreover $M'_{s(e')}$ is invariant under all endomorphisms of $M$, then:
    $$f_{e'} = \left(\begin{array}{cc} X_{11} & \varphi \\ X_{21} & 0 \end{array}\right)$$
    \item if moreover $M'_{t(e')}$ is invariant under all endomorphisms of $M$, then:
    $$f_{e'} = \left(\begin{array}{cc} 0 & \varphi \\ X_{21} & X_{22} \end{array}\right)$$
    \item if moreover $M'_{s(e')}$ and $M'_{t(e')}$ are invariant under all endomorphisms of $M$, then:
    $$f_{e'} = \left(\begin{array}{cc} X_{11} & \varphi \\ X_{21} & X_{22} \end{array}\right)$$
\end{enumerate}
Where $X_{ij}$ are matrices of indeterminates of appropriate dimensions and $X = X_{11} \cup X_{21} \cup X_{22}$.
\end{theorem}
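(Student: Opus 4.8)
The plan is to prove that $f$ is a ring epimorphism by checking the ideal criterion of Theorem \ref{TEpimorphismIdeal} for $B=k\langle X\rangle$. First one verifies that the prescribed data really defines a $k$-algebra homomorphism $f\colon kQ'\to M_n(k\langle X\rangle)$: the images of the vertex idempotents are orthogonal idempotents summing to $I_n$, the block shapes match $e_{t(e')}e'e_{s(e')}=e'$, so the universal property of the path algebra (Theorem~1.8, Chapter~II in \cite{assem2006elements}) applies; by construction $f$ restricts on $kQ\subseteq kQ'$ to the structure map of $M$ written in block form, and setting all indeterminates in $X$ to $0$ turns $f_{e'}$ into $\varphi_{e'}$, so the $X=0$ specialisation of $f$ is exactly the brick $M'$.

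\textbf{Reduction to an ideal membership.} Write $R=k\langle X\rangle\ast_k k\langle(v_{ij})_{i,j=1}^n\rangle$ and let $I_f\subseteq R$ be the ideal of Theorem \ref{TEpimorphismIdeal}. By that theorem it suffices to show that $v_{ii}-v_{jj}$, $v_{i'j'}$ for $i'\neq j'$, and $bv_{ii}-v_{ii}b$ for $b\in k\langle X\rangle$ all lie in $I_f$. Following the set-up in the proof of Theorem \ref{TEpimorphismIdeal}, put $W=R/I_f$, let $\mu\colon k\langle X\rangle\to\mathrm{End}_k(W)$ be right multiplication, equip $W^n$ with the resulting right $M_n(k\langle X\rangle)$-module structure and hence, via $f$, with a right $kQ'$-module structure. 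By definition of $I_f$ the matrix $\Psi=(\mu(v_{ij}))_{i,j=1}^n$ is a $kQ'$-endomorphism of $W^n$, so it is enough to prove that $\Psi$ is a scalar matrix $\mathrm{diag}(\psi,\dots,\psi)$ with $\psi\mu(b)=\mu(b)\psi$ for all $b\in k\langle X\rangle$: evaluating the resulting identities $\mu(v_{ij})=\delta_{ij}\psi$ and $[\psi,\mu(b)]=0$ at $\overline{1}\in W$ yields precisely the three desired memberships.

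\textbf{Analysis of $\Psi$.} Restrict $\Psi$ along $kQ\subseteq kQ'$. Since $f|_{kQ}$ is the structure map of $M$ in block form, as a $kQ$-module $W^n\cong M\otimes_k W\cong M^{(L)}$ with $L$ a $k$-basis of $W$, and $M$ is finitely presented over $kQ$; exactly as in the proof of Proposition \ref{PIdealRelations}, this forces the matrices $(c^{ij}_{\ell\ell'})_{i,j=1}^n$ defined by $e_\ell\cdot\overline{v_{ij}}=\sum_{\ell'}c^{ij}_{\ell\ell'}e_{\ell'}$ to be $kQ$-endomorphisms of $M$ for all $\ell,\ell'\in L$. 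Now one brings in the remaining relations, namely the commutation of $\Psi$ with the action of $e'$, i.e.\ with multiplication by $f(e')$; this matrix splits as a constant part — the embedded $\varphi_{e'}$ — plus an indeterminate part $D$ supported, according to the case (i)--(iv), on exactly those blocks of $M_{t(e')}\times M_{s(e')}$ allowed by the invariance hypotheses in force. The heart of the argument is to show that the invariance of $\mathrm{Ker}\,\varphi_{e'}$ and $\mathrm{Im}\,\varphi_{e'}$ — and, in cases (ii)--(iv), also of $M'_{s(e')}$ and/or $M'_{t(e')}$ — under $\mathrm{End}_{kQ}(M)$ decouples the constant and indeterminate contributions of this relation inside $W$. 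Granting that, the constant part says each $(c^{ij}_{\ell\ell'})$ also commutes with $\varphi_{e'}$, hence lies in $\mathrm{End}_{kQ'}(M')=k$ because $M'$ is a brick, so $\overline{v_{ij}}=\delta_{ij}\overline{v_{11}}$; and the indeterminate part then reduces to $\overline{v_{11}}\,\overline{X_{pq}}=\overline{X_{pq}}\,\overline{v_{11}}$ for every indeterminate $X_{pq}$ appearing in $D$, which in each of the four cases exhausts $X$, giving $[\overline{v_{11}},\mu(b)]=0$ for all $b\in k\langle X\rangle$.

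\textbf{Main obstacle.} I expect the decoupling step above to be the crux: a priori the $e'$-relation is a single identity mixing terms that are linear in the $v_{ij}$ with terms that are bilinear in the $v_{ij}$ and the $X_{pq}$, and it is precisely the match between which blocks of $f_{e'}$ are allowed to be indeterminate and which summands of $M$ are assumed invariant that prevents these contributions from collapsing into one another in the quotient $W$. Carrying this out will require an explicit block computation and the bookkeeping of the four cases — the $\mathrm{Im}\,\varphi_{e'}\leftarrow M'_{s(e')}$ block is always the fixed copy of $\varphi$, the $M'_{t(e')}\leftarrow\mathrm{Ker}\,\varphi_{e'}$ block is always free, and the remaining two blocks become free exactly when $M'_{s(e')}$, respectively $M'_{t(e')}$, is invariant.
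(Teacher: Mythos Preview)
Your proposal is correct and follows essentially the same route as the paper: reduce to the ideal criterion of Theorem~\ref{TEpimorphismIdeal}, use Proposition~\ref{PIdealRelations} on the restriction to $kQ$ to exploit the invariance hypotheses, and then use the brick property of $M'$ together with the $e'$-relation to obtain both the diagonal conditions and the commutation with the free variables.

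One remark on your ``main obstacle'': the decoupling you anticipate is in fact immediate once the argument is organised as the paper does. Rather than analysing the $e'$-relation first and then trying to separate constant from indeterminate contributions inside $W$, first apply Proposition~\ref{PIdealRelations} to $kQ\to M_n(k)$ to conclude directly that the off-block entries $s_{12},t_{12}$ (and, in cases (ii)--(iv), the further blocks dictated by the extra invariance) already lie in $I_f$. With these set to zero, the commutator $[f_{e'},(\cdot)]$ computed as a $2\times 2$ block matrix has the $\varphi$-terms and the $X_{21}$-terms sitting in \emph{disjoint} block positions, e.g.\ in case~(i)
\[
\left(\begin{array}{cc}\varphi s_{21} & \varphi s_{22}-t_{11}\varphi\\ X_{21}s_{11}-t_{22}X_{21} & -t_{21}\varphi\end{array}\right),
\]
so each entry is separately in $I_f$ without any further argument. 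The three $\varphi$-entries, together with the $kQ$-generators, are exactly the generators of $I_{f'}$ for the brick $M'$, whence $v_{ii}-v_{jj},v_{i'j'}\in I_{f'}\subseteq I_f$; and then the remaining entry $X_{21}s_{11}-t_{22}X_{21}$ reduces modulo these to $X_{21}v_{11}-v_{11}X_{21}$, giving the commutation with $k\langle X\rangle$. This is exactly your steps 6--7, but the point is that no delicate analysis in $W$ is required: the block bookkeeping alone does the job.
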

\begin{proof}
We only prove the first assertion as the other ones follow in an analogous way.

Denote $f': kQ' \to M_n(k)$ defining the action of $kQ'$ on $M'$. By Proposition \ref{PIdealRelations} above, the relation from $f'(e')$ has, without loss of generality, this form:
$$\left(\begin{array}{cc} 0 & \varphi \\ 0 & 0 \end{array}\right) \left(\begin{array}{cc} s_{11} & s_{12} \\ s_{21} & s_{22} \end{array}\right) - \left(\begin{array}{cc} t_{11} & t_{12} \\ t_{21} & t_{22} \end{array}\right) \left(\begin{array}{cc} 0 & \varphi \\ 0 & 0 \end{array}\right) = \left(\begin{array}{cc} \varphi s_{21} & \varphi s_{22} - t_{11} \varphi \\ 0 & -t_{21} \varphi \end{array}\right)$$
Where $s_{ij}$ and $t_{ij}$ correspond to components of $k-$endomorphisms of $M_{s(e')} = \mathrm{Ker}\,\varphi_{e'} \oplus M'_{s(e')}$ and $M_{t(e')} = \mathrm{Im}\,\varphi_{e'} \oplus M'_{t(e')}$. Note that $M_v$ for all $v \in Q'_0$ and thus all maps between them are zero.

Naturally, $I_{f'} \subseteq k\langle (v_{ij})_{i,j=1}^n \rangle$ can be seen as being contained in $I_f \subseteq k\langle X \rangle \ast_k k\langle (v_{ij})_{i,j=1}^n \rangle$.
If $\mathrm{Ker}\,\varphi_{e'}$ and $\mathrm{Im}\,\varphi_{e'}$ are invariant under all endomorphisms of $M$, by Proposition \ref{PIdealRelations}, we obtain that $s_{12}, t_{12} \in I_f$ as they lie in the ideal $I_{kQ \to M_n(k)}$. SO the relation from $f(e')$ amounts to the following: 
$$\left(\begin{array}{cc} 0 & \varphi \\ X & 0 \end{array}\right) \left(\begin{array}{cc} s_{11} & 0 \\ s_{21} & s_{22} \end{array}\right) - \left(\begin{array}{cc} t_{11} & 0 \\ t_{21} & t_{22} \end{array}\right) \left(\begin{array}{cc} 0 & \varphi \\ X & 0 \end{array}\right) =$$
$$= \left(\begin{array}{cc} \varphi s_{21} & \varphi s_{22} - t_{11} \varphi \\ X_{21} s_{11} - t_{22} X_{21} & -t_{21} \varphi \end{array}\right)$$
Since $I_{f'}$ contains $v_{ii} - v_{jj}, v_{i'j'}$ for all $i,j,i' \neq j' \in \{1, \dots, n\}$ and $I_f$ contains the generators of $I_{f'}$, we have that $v_{ii} - v_{jj}, v_{i'j'} \in I_f$ for all $i,j,i' \neq j' \in \{1, \dots, n\}$. Finally, using the element $X_{21} s_{11} - t_{22} X_{21} \in I_f$, we derive that $x v_{ii} - v_{ii} x \in I_f$ for every $x \in X$ and $i = 1, \dots, n$. By Theorem \ref{TEpimorphismIdeal}, $f$ is an epimorphism.
\end{proof}

\begin{remark}
The theorem above is the aforementioned generalization of Theorem \ref{Theorem20}. Theorem \ref{Theorem20}, obviously without the reference to universal localisations, is a special case thereof for the brick $B$, whose all $k-$linear subspaces are invariant under all its endomorphisms as they are trivial, corresponding to $M$ and $\varphi_{e'}$ being zero.
\end{remark}

There is also a way how to construct ring epimorphisms from a path algebra to matrices over noncommutative polynomials by adding a vertex rather than an edge in a similar manner to Theorem \ref{Theorem20}, which can be easily proved using Theorem \ref{TEpimorphismIdeal}. Actually, this proposition can be also generalized in terms of constructing universal localisations as we show in the next subsection.
\begin{proposition}\label{PSimpleGluingEpimorphisms}
Let $Q$ be a finite acyclic quiver and $M = (M_v, \varphi_{\alpha})_{v \in Q_0, \varphi \in Q_1}$ a representation of $Q$ over $k$ of dimension $n$ that is brick and such that there exists $w \in Q_0$ with $\mathrm{dim}_k M_w = m > 1$. We define a quiver $Q$ with $Q'_0 = Q_0 \cup \{w'\}$, $Q'_1 = Q_1 \cup \{e'\}$, $s_{Q'}(e') = w', t_{Q'}(e') = w$ and $s_{Q'}(e) = s_Q (e), t_{Q'}(e) = t_Q (e)$ for all $e \in Q_1$. Denote $f: kQ \to M_n(k)$ defining the action of $kQ$ on $M$. There is a ring epimorphism $g: kQ' \to M_{n+1}(k\langle x_1, \dots, x_{m - 1}\rangle)$ defined as follows:
$$g(a) = \left(\begin{array}{cc} f(a) & 0 \\ 0 & 0 \end{array}\right) \,\mathrm{for}\, a \in kQ, \, g(w') = \left(\begin{array}{cc} 0 & 0 \\ 0 & 1 \end{array}\right),\, g(e') = \left(\begin{array}{cc} 0 & X \\ 0 & 0 \end{array}\right)$$
where $X = (1 \, x_1 \dots x_{m-1} \, 0 \dots 0)^T$.
\end{proposition}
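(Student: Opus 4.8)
The plan is to apply the criterion of Theorem~\ref{TEpimorphismIdeal}: writing $B = k\langle x_1, \dots, x_{m-1}\rangle$ and $R = B \ast_k k\langle (v_{ij})_{i,j=1}^{n+1}\rangle$, it suffices to prove that the ideal $I_g \subseteq R$ generated by the entries of $(v_{ij})_{i,j=1}^{n+1}\, g(a) - g(a)\,(v_{ij})_{i,j=1}^{n+1}$ for $a \in kQ'$ contains $v_{ii} - v_{jj}$, $v_{i'j'}$ for $i'\neq j'$, and $b v_{ii} - v_{ii} b$ for all indices in $\{1,\dots,n+1\}$ and all $b \in B$. Before that, a routine check via the universal property of path algebras confirms that $g$ is a well-defined $k$-algebra homomorphism: the $g(e_v)$ are orthogonal idempotents summing to $I_{n+1}$, and $g(e_w)\,g(e')\,g(e_{w'}) = g(e')$ precisely because the column $X$ is supported on the coordinates spanning $M_w$ (this is where a leading entry $1$ together with $m-1$ further entries, all fitting inside the $w$-block, is used).

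The first batch of relations comes for free from the subquiver $Q$. Since $M$ is a brick, Proposition~\ref{Proposition19} (with Theorem~\ref{TEpimorphismIdeal}, or directly Proposition~\ref{PIdealRelations} applied to $\mathrm{End}_{kQ}(M) = k\cdot I_n$) shows that the ideal $I_f \subseteq k\langle(v_{ij})_{i,j=1}^n\rangle$ attached to $f: kQ \to M_n(k)$ contains $v_{ii} - v_{jj}$ and $v_{i'j'}$ for $i,j,i',j' \le n$ with $i'\ne j'$. These elements lie already in $I_f$, which is generated by commutator-entries attached to elements $a\in kQ\subseteq kQ'$, hence they lie in $I_g$ as well. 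Next, expanding $(v_{ij})_{i,j=1}^{n+1}\,g(e_{w'}) - g(e_{w'})\,(v_{ij})_{i,j=1}^{n+1}$, where $g(e_{w'})$ is the rank-one idempotent with a single $1$ in position $(n+1,n+1)$, one reads off that its entries are exactly $v_{i,n+1}$ ($i\le n$) and $v_{n+1,j}$ ($j\le n$); hence these also lie in $I_g$. So far $I_g$ contains every off-diagonal $v_{i'j'}$ and every difference $v_{ii}-v_{jj}$ with $i,j\le n$, so working modulo $I_g$ collapses $(v_{ij})_{i,j=1}^{n+1}$ to the near-scalar matrix with $v_{11}+I_g$ repeated $n$ times on the diagonal and $v_{n+1,n+1}+I_g$ in the last slot.

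All the remaining relations are extracted from the single new arrow $e'$. Reducing $(v_{ij})_{i,j=1}^{n+1}\,g(e') - g(e')\,(v_{ij})_{i,j=1}^{n+1}$ modulo $I_g$ leaves a matrix whose only nonzero block is the top-right column $t X - X s$, with $t = v_{11}+I_g$ and $s = v_{n+1,n+1}+I_g$; reading off its $n$ entries and using $X = (1, x_1, \dots, x_{m-1}, 0, \dots, 0)^T$ gives $t - s \in I_g$ from the leading $1$, hence $v_{11} - v_{n+1,n+1} \in I_g$, and $t x_\ell - x_\ell s \in I_g$ for $\ell = 1, \dots, m-1$, which combined with $t-s\in I_g$ yields $v_{11} x_\ell - x_\ell v_{11} \in I_g$. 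A standard subalgebra argument — the set of $b\in B$ with $v_{11}b - bv_{11}\in I_g$ is a $k$-subalgebra of $B$ containing each generator $x_\ell$, hence equals $B$ — then gives $v_{11}b - bv_{11}\in I_g$ for every $b\in B$, and since all $v_{ii}$ and $v_{n+1,n+1}$ are congruent to $v_{11}$ modulo $I_g$, also $b v_{ii} - v_{ii}b \in I_g$ for all $i\in\{1,\dots,n+1\}$. Together with the relations secured earlier, this verifies the hypotheses of Theorem~\ref{TEpimorphismIdeal}, so $g$ is a ring epimorphism. The only non-mechanical point is the choice of $X$: it must simultaneously sit inside the $w$-block (for $g$ to be an algebra map) and, after the near-scalar reduction, its leading entry must pin $v_{n+1,n+1}$ to $v_{11}$ while its remaining $m-1$ entries must force $v_{11}$ to commute with each generator of $B$ — which is exactly the room that $\dim_k M_w = m$ provides.
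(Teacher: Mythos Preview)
Your proof is correct and follows essentially the same route as the paper's own argument: both invoke Theorem~\ref{TEpimorphismIdeal}, first harvest the relations $v_{ii}-v_{jj},\,v_{i'j'}\in I_g$ for indices $\le n$ from the brick $M$ via $f$, then kill the off-diagonal $v$'s touching index $n+1$ using $g(e_{w'})$, and finally read off $v_{11}-v_{n+1,n+1}$ and the commutators $v_{11}x_\ell-x_\ell v_{11}$ from the column $X$ in $g(e')$. Your write-up is in fact a bit more careful than the paper's (you spell out the well-definedness check and the subalgebra argument passing from generators $x_\ell$ to all $b\in B$), but there is no substantive difference in method.
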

\begin{proof}
Consider the ideal $I_g$ of $k\langle x_1, \dots, x_{m - 1}\rangle \ast_k k\langle (v_{ij})_{i,j = 1}^{n+1}\rangle$. Since $M$ is brick, we already have that $v_{ii} - v_{jj}, v_{i'j'} \in I_g$ for all $i,j,i' \neq j' \in \{1, \dots, n\}$. The relation from $g(w')$ gives us that $v_{ij} = 0$ for all $i \neq j$ and $i = n+1$ or $j = n+1$.

Therefore, the relation arising from $g(e')$ reads as follows:
$$\left(\begin{array}{cc} 0 & X \\ 0 & 0 \end{array}\right)\left(\begin{array}{cc} v_{11} \cdot I_n & 0 \\ 0 & v_{(n+1)(n+1)} \end{array}\right) - \left(\begin{array}{cc} v_{11} \cdot I_n & 0 \\ 0 & v_{(n+1)(n+1)} \end{array}\right) \left(\begin{array}{cc} 0 & X \\ 0 & 0 \end{array}\right)$$
This yields that $X v_{(n+1)(n+1)} = v_{11} \cdot I_n X$, which can be written as $$(v_{(n+1)(n+1)} \, v_{(n+1)(n+1)}x_1 \dots v_{(n+1)(n+1)}x_{m-1} \, 0 \dots 0)^T -$$ $$(v_{11} \, v_{11}x_1 \dots v_{11}x_{m-1} \, 0 \dots 0)^T \in I_g.$$
We obtain that $v_{(n+1)(n+1)} - v_{11}$, and we employ this to show that $v_{ii}$ commute with $x_j$ for all $i = 1, \dots, n+1$ and $j = 1, \dots, m-1$. The map $g$ is an epimorphism by Theorem \ref{TEpimorphismIdeal}.
\end{proof}

\subsection{Extending universal localisations}
In this subsection, we explore possible generalizations of Theorem \ref{Theorem20} by extending universal localisations from smaller to larger path algebras. For this end, we make use of the matrix reduction functor, and we show that, due to their similar universal property, matrix algebras over $k\langle x_1, \dots, x_n \rangle$ arise naturally when dealing with certain universal localisations of path algebras.

\begin{definition}[Matrix reduction functor, section 0.2 in \cite{cohn2006free}]\label{DMatrixReductionFunctor}
    There is a left adjoint to the $n \times n$ matrix functor $M_n(-): k-\mathsf{Alg} \to k-\mathsf{Alg}$; it is denoted $\sqrt[n]{-}: k-\mathsf{Alg} \to k-\mathsf{Alg}$ and referred to as the \textit{$n-$matrix reduction functor}.
\end{definition}
The adjunction gives us a functorial isomorphism $\mathrm{Hom}_{k-\mathsf{Alg}}(A,M_n(\sqrt[n]{B})) \cong \mathrm{Hom}_{k-\mathsf{Alg}}(\sqrt[n]{A},B)$ for all $A, B \in k-\mathsf{Alg}$. It follows that there is a map $\mu_{A, n}: A \to M_n(\sqrt[n]{A})$ such that for any $f: A \to M_n(B)$ there is a unique map $f': \sqrt[n]{A} \to B$ with $f = M_n(f')\mu_{A, n}$. This situation can be summarized by the following commutative diagram:
$$\begin{tikzcd}
\mathrm{Hom}_{k-\mathsf{Alg}}(A,M_n(\sqrt[n]{A})) \arrow[']{d}{\mathrm{Hom}_{k-\mathsf{Alg}}(A,M_n(f'))} \arrow{r}{\cong} & \mathrm{Hom}_{k-\mathsf{Alg}}(\sqrt[n]{A},\sqrt[n]{A}) \arrow{d}{\mathrm{Hom}_{k-\mathsf{Alg}}(\sqrt[n]{A},f')}\\
\mathrm{Hom}_{k-\mathsf{Alg}}(A,M_n(B)) \arrow{r}{\cong} & \mathrm{Hom}_{k-\mathsf{Alg}}(\sqrt[n]{A},B)
\end{tikzcd}$$
Actually, it is possible to describe $\sqrt[n]{A}$ and $\mu_{A,n}$ rather simply. The ring $M_n(\sqrt[n]{A})$ is isomorphic to $M_n(K) \ast_k A$; $\sqrt[n]{A}$ is isomorphic to the centralizer of $e_{ij}$ in $M_n(K) \ast_k A$, which comprises of elements of form $\sum_{v} e_{iv} a e_{vj}$ for $a \in A$ (compare Theorem 0.2.3 in \cite{cohn2006free}); $\mu_{A,n}(a) = \sum_{ij} e_{ij} \cdot \left(\sum_{v} e_{iv} a e_{vj}\right)$ for all $a \in A$ and, given $f: A \to M_n(B)$, the $f': \sqrt[n]{A} \to B$ such that $f = M_n(f) = $ is defined simply as $f\left(\sum_{v} e_{iv} a e_{vj}\right) = (f(a))_{ij}$.
\begin{lemma}\label{LLocalisationAsFactor}
Suppose that $\ell_\Sigma: A \to A_\Sigma$ is the universal localisation in a set of maps between finitely generated projectives over $A$. Let $f: A \to B$ be a map such that there is a map $g: B \to A_\Sigma$ such that $\ell_\Sigma = gf$, then there exists a surjective map $h: B_{\Sigma \otimes_A B} \to A_\Sigma$ such that $g = h \ell_{\Sigma \otimes_A B}$, where $\Sigma \otimes_A B = \{\sigma \otimes_A B \,|\, \sigma \in \Sigma\}$ and $\ell_{\Sigma \otimes_A B}: B \to B_{\Sigma \otimes_A B}$ is the universal localisation of $B$ in $\Sigma \otimes_A B$.
\end{lemma}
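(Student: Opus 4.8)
The plan is to show that $g$ inverts $\Sigma \otimes_A B$, invoke the universal property of $\ell_{\Sigma \otimes_A B}$ to manufacture $h$, and then exhibit a right inverse of $h$ using that $\ell_\Sigma$ is a ring epimorphism.

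First I would check that $g\colon B \to A_\Sigma$ is $(\Sigma \otimes_A B)$-inverting. Fix $\sigma\colon P \to Q$ in $\Sigma$ with $P, Q$ finitely generated projective right $A$-modules, and view $A_\Sigma$ as a $B$-algebra via $g$ and as an $A$-algebra via $\ell_\Sigma = gf$. The canonical base-change isomorphisms $(P \otimes_A B) \otimes_B A_\Sigma \cong P \otimes_A A_\Sigma$ and the analogous one for $Q$ identify $(\sigma \otimes_A B) \otimes_B A_\Sigma$ with $\sigma \otimes_A A_\Sigma$, which is an isomorphism because $\ell_\Sigma$ is $\Sigma$-inverting (Theorem \ref{Theorem10}). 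Hence $g$ is $(\Sigma \otimes_A B)$-inverting, and the universal property of the universal localisation $\ell_{\Sigma \otimes_A B}\colon B \to B_{\Sigma \otimes_A B}$ provides a unique ring homomorphism $h\colon B_{\Sigma \otimes_A B} \to A_\Sigma$ with $g = h\,\ell_{\Sigma \otimes_A B}$.

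It then remains to prove that $h$ is surjective, and this is the step I expect to be the crux. The idea is to build a map in the opposite direction. The composite $\ell_{\Sigma \otimes_A B} \circ f\colon A \to B_{\Sigma \otimes_A B}$ is $\Sigma$-inverting, since for $\sigma \in \Sigma$ one has $\sigma \otimes_A B_{\Sigma \otimes_A B} \cong (\sigma \otimes_A B)\otimes_B B_{\Sigma \otimes_A B}$ and $\ell_{\Sigma \otimes_A B}$ inverts $\sigma \otimes_A B \in \Sigma \otimes_A B$. By the universal property of $\ell_\Sigma$ there is a ring homomorphism $k\colon A_\Sigma \to B_{\Sigma \otimes_A B}$ with $k\,\ell_\Sigma = \ell_{\Sigma \otimes_A B}\, f$. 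Now $h\,k\,\ell_\Sigma = h\,\ell_{\Sigma \otimes_A B}\, f = g f = \ell_\Sigma = \mathrm{id}_{A_\Sigma}\,\ell_\Sigma$, and since $\ell_\Sigma$ is a ring epimorphism this forces $h\,k = \mathrm{id}_{A_\Sigma}$; in particular $h$ is (split) surjective, which is exactly the claim.

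The main obstacle is precisely this surjectivity: it is not visible from the construction of $h$ itself, and one genuinely needs the auxiliary factorisation $k$ together with the epimorphism property of $\ell_\Sigma$. Steps 1 and 2 are routine manipulations of the standard $\otimes$ base-change isomorphisms and the universal properties recorded in Theorem \ref{Theorem10}; a minor care point throughout is keeping the left/right module structures straight, since $\Sigma$ consists of maps of right modules while the algebra structures enter on the left.
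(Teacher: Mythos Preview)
Your argument is correct and matches the paper's proof essentially line for line: the paper constructs the same two maps (called $j$ and $i$ there, corresponding to your $h$ and $k$) via the same universal properties, computes $ji\ell_\Sigma = \ell_\Sigma$, and concludes $ji = \mathrm{id}_{A_\Sigma}$ from the uniqueness clause of the universal property of $\ell_\Sigma$, which is exactly your use of the fact that $\ell_\Sigma$ is a ring epimorphism.
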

\begin{proof}
Due to the universal property of the universal localisation $\ell_\Sigma$, we have a unique map $i: A_\Sigma \to B_{\Sigma \otimes_A B}$ with the property that $\ell_{\Sigma \otimes_A B} f = i \ell_\Sigma$ as every $\sigma \otimes_A B_{\Sigma \otimes_A B}$, $\sigma \in \Sigma$, is invertible in $B_{\Sigma \otimes_A B}$. Similarly, there is a exists a unique map $j: B_{\Sigma \otimes_A B} \to A_\Sigma$ extending the map $g$ since every $(\sigma \otimes_A B) \otimes A_\Sigma = \sigma \otimes_A A_\Sigma$, $\sigma \in \Sigma$, is invertible. We can calculate that $j i \ell_\Sigma = j \ell_{\Sigma \otimes_A B} f = gf = \ell_\Sigma$. By universal property of $\ell_\Sigma$, only the identity map on $A_\Sigma$ satisfies that $\ell_\Sigma = m \ell_\Sigma$ for $m: A_\Sigma \to A_\Sigma$; therefore, $ji = \mathsf{id}_{A_\Sigma}$. This implies that $j$ is surjective.
\end{proof}

\begin{corollary}
Suppose that $\ell_\Sigma: A \to A_\Sigma$ is a universal localisation in $\Sigma$ such that $A_\Sigma \cong M_n(B)$ for some $B$ and $n$, then $A_\Sigma$ is isomorphic to a factor of $M_n(\sqrt[n]{A})_{\Sigma'}$ where $\Sigma' = \{\sigma \otimes_A M_n(\sqrt[n]{A}) \, | \, \sigma \in \Sigma\}$.
\end{corollary}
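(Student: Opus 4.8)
The plan is to exploit the universal property of the matrix reduction functor to realise $\ell_\Sigma$ as a map that factors through the canonical homomorphism $\mu_{A,n}\colon A \to M_n(\sqrt[n]{A})$, and then to feed this factorisation directly into Lemma~\ref{LLocalisationAsFactor}.

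First I would fix an isomorphism $\theta\colon A_\Sigma \xrightarrow{\sim} M_n(B)$ and consider the $k$-algebra homomorphism $\theta\circ\ell_\Sigma\colon A \to M_n(B)$. By the adjunction defining $\sqrt[n]{-}$ (the commutative diagram following Definition~\ref{DMatrixReductionFunctor}), there is a unique $f'\colon \sqrt[n]{A} \to B$ with $\theta\circ\ell_\Sigma = M_n(f')\circ\mu_{A,n}$. Setting $g := \theta^{-1}\circ M_n(f')\colon M_n(\sqrt[n]{A}) \to A_\Sigma$, we get $\ell_\Sigma = g\circ\mu_{A,n}$; that is, $\ell_\Sigma$ factors through the ring homomorphism $\mu_{A,n}$.

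Next I would note that $\Sigma' = \Sigma\otimes_A M_n(\sqrt[n]{A})$ is genuinely a set of maps between finitely generated projective $M_n(\sqrt[n]{A})$-modules, since base change along $\mu_{A,n}$ carries finitely generated projectives and maps between them to such; hence the universal localisation $\ell_{\Sigma'}\colon M_n(\sqrt[n]{A}) \to M_n(\sqrt[n]{A})_{\Sigma'}$ exists. Now Lemma~\ref{LLocalisationAsFactor}, applied with its ``$B$'' replaced by $M_n(\sqrt[n]{A})$, its ``$f$'' by $\mu_{A,n}$, and its ``$g$'' by the map just produced, yields a surjective ring homomorphism $h\colon M_n(\sqrt[n]{A})_{\Sigma'} \to A_\Sigma$ with $g = h\circ\ell_{\Sigma'}$. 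Surjectivity of $h$ gives $A_\Sigma \cong M_n(\sqrt[n]{A})_{\Sigma'}/\ker h$, which is exactly the claim that $A_\Sigma$ is a factor of $M_n(\sqrt[n]{A})_{\Sigma'}$.

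The argument is essentially bookkeeping once the right objects are in place, so I do not anticipate a serious obstacle: the substantive input is Lemma~\ref{LLocalisationAsFactor}, and the only point needing a little care is verifying its hypothesis — the existence of $g\colon M_n(\sqrt[n]{A}) \to A_\Sigma$ with $\ell_\Sigma = g\mu_{A,n}$ — which is supplied precisely by the matrix reduction adjunction and uses nothing about $\Sigma$ beyond that it is a localising set. If one wished to say more, one could attempt to describe $\ker h$ explicitly in terms of the image under $\ell_{\Sigma'}$ of the relations in $M_n(\sqrt[n]{A}) \cong M_n(K)\ast_k A$, but this is not required for the statement.
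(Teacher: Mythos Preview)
Your argument is correct and is exactly the intended one: the paper states this as an immediate corollary of Lemma~\ref{LLocalisationAsFactor} without giving a proof, and the only missing ingredient---that $\ell_\Sigma$ factors through $\mu_{A,n}$---is precisely what you supply via the matrix reduction adjunction. There is nothing to add.
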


\begin{proposition}\label{PMatrixReductionEpimorphism}
Let $f: A \to M_n(B)$ be map of $k-$algebras, then the following are equivalent:
\begin{enumerate}[(i)]
    \item The map $f$ is an epimorphism.
    \item The dominion of $f(A)$ in $M_n(B)$ contains all $e_{ij}$ for $1 \leq i, j \leq n$ and the corresponding map from $\sqrt[n]{A}$ to $B$ is an epimorphism.
    \item The map $\sqrt[n]{f}: \sqrt[n]{A} \to \sqrt[n]{M_n(B)}$ is an epimorphism.
\end{enumerate}
\end{proposition}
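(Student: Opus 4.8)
The plan is to establish the two biconditionals (i)$\Leftrightarrow$(iii) and (i)$\Leftrightarrow$(ii) separately; the first is the crux, the second then a short dominion computation. For (i)$\Leftrightarrow$(iii) I would first dispose of the matrix reduction functor by passing to a free product. Recall from the discussion after Definition \ref{DMatrixReductionFunctor} that $M_n(\sqrt[n]{R})$ is naturally isomorphic to $M_n(k)\ast_k R$, in such a way that $\mu_{R,n}\colon R\to M_n(\sqrt[n]{R})$ becomes the canonical inclusion of the free factor $R$ and the matrix units $e_{ij}$ come from the free factor $M_n(k)$. Applying $M_n(-)$ to $\sqrt[n]{f}$, using naturality of $\mu$ (so $M_n(\sqrt[n]{f})\circ\mu_{A,n}=\mu_{M_n(B),n}\circ f$) together with the fact that $M_n(\sqrt[n]{f})$ fixes the matrix units, one checks that under these isomorphisms $M_n(\sqrt[n]{f})$ is precisely $\mathrm{id}_{M_n(k)}\ast_k f\colon M_n(k)\ast_k A\to M_n(k)\ast_k M_n(B)$. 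Since $M_n(-)$ is faithful and both preserves ring epimorphisms (Morita invariance, Corollary \ref{Corollary7}) and reflects them (from $M_n(\varrho_1)M_n(g)=M_n(\varrho_2)M_n(g)$ one gets $M_n(\varrho_1)=M_n(\varrho_2)$, hence $\varrho_1=\varrho_2$), statement (iii) is equivalent to the assertion that $\mathrm{id}_{M_n(k)}\ast_k f$ is a ring epimorphism.

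Next I would unwind this using the universal property of the coproduct $M_n(k)\ast_k(-)$ in $k\text{-}\mathsf{Alg}$: a homomorphism out of $M_n(k)\ast_k C$ is the same as a pair of homomorphisms out of $M_n(k)$ and out of $C$ agreeing on $k$, and with this identification $\varrho_1\circ(\mathrm{id}_{M_n(k)}\ast_k f)=\varrho_2\circ(\mathrm{id}_{M_n(k)}\ast_k f)$ holds exactly when $\varrho_1,\varrho_2$ agree on the factor $M_n(k)$ and $\varrho_1|_{M_n(B)}\circ f=\varrho_2|_{M_n(B)}\circ f$. The clause ``$\varrho_1,\varrho_2$ agree on $M_n(k)$'' occurs symmetrically in this condition and in ``$\varrho_1=\varrho_2$'', and any two homomorphisms $\beta_1,\beta_2\colon M_n(B)\to T$ with $\beta_1 f=\beta_2 f$ automatically agree on the image of $k$, so they always arise as the $M_n(B)$-components of an admissible pair $\varrho_1,\varrho_2$. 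Hence $\mathrm{id}_{M_n(k)}\ast_k f$ is an epimorphism of $k$-algebras if and only if $f$ is; and for homomorphisms of $k$-algebras being a $k$-algebra epimorphism coincides with being a ring epimorphism (a standard check, since the image of $k$ remains central in the subring of the target generated by the two competing homomorphisms). This proves (i)$\Leftrightarrow$(iii).

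For (i)$\Leftrightarrow$(ii): if $f$ is a ring epimorphism, then by Theorem \ref{Theorem3} the dominion of $f(A)$ in $M_n(B)$ is all of $M_n(B)$, hence contains every $e_{ij}$; moreover $\sqrt[n]{f}$ is an epimorphism by (iii), and the adjunct $f'\colon\sqrt[n]{A}\to B$ equals $\varepsilon_B\circ\sqrt[n]{f}$, where $\varepsilon_B\colon\sqrt[n]{M_n(B)}\to B$, $\sum_v e_{iv}Xe_{vj}\mapsto X_{ij}$, is the (surjective, hence epic) counit, so $f'$ is an epimorphism as well. Conversely, suppose the dominion $D$ of $f(A)$ in $M_n(B)$ contains all $e_{ij}$ and $f'$ is a ring epimorphism. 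As $D$ is a $k$-subalgebra containing $f(A)$ and all matrix units, and $e_{pi}f(a)e_{jq}=(f(a))_{ij}e_{pq}$, it contains $M_n(R_0)$, where $R_0\subseteq B$ is the $k$-subalgebra generated by all entries $(f(a))_{ij}$; but $R_0$ is exactly $\mathrm{Im}(f')$ (the generators $\sum_v e_{iv}ae_{vj}$ of $\sqrt[n]{A}$ map to $(f(a))_{ij}$), so $f'$ being an epimorphism makes $R_0\hookrightarrow B$ a ring epimorphism, and applying $M_n(-)$ shows that the dominion of $M_n(R_0)$ in $M_n(B)$ is all of $M_n(B)$. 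Consequently two ring homomorphisms out of $M_n(B)$ agreeing on $f(A)$ agree on $D\supseteq M_n(R_0)$, hence on all of $M_n(B)$, so $f$ is a ring epimorphism.

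The main obstacle is the opening move: recognising $M_n(\sqrt[n]{f})$ as $\mathrm{id}_{M_n(k)}\ast_k f$ and thereby reducing (iii) to a statement purely about $f$ via the universal property of the free product. Once that identification is made, the rest is bookkeeping — manipulating dominions, using Morita invariance of the class of ring epimorphisms, and invoking the routine (but worth stating) equivalence between ring epimorphisms and $k$-algebra epimorphisms for homomorphisms of $k$-algebras.
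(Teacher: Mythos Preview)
Your proof is correct. For (i)$\Leftrightarrow$(iii) you take a genuinely different route from the paper: you concretely identify $M_n(\sqrt[n]{f})$ with $\mathrm{id}_{M_n(k)}\ast_k f$ via the free-product description of $M_n(\sqrt[n]{-})$, and then argue directly from the universal property of the coproduct that $\mathrm{id}_{M_n(k)}\ast_k f$ is epi iff $f$ is. The paper instead stays at the level of the adjunction: it notes that any $k$-algebra receiving a homomorphism from $M_n(B)$ is itself of the form $M_n(C)$, so $f$ is epi iff $\mathrm{Hom}(f,M_n(C))$ is injective for every $C$, and then transports this across $\mathrm{Hom}(-,M_n(C))\cong\mathrm{Hom}(\sqrt[n]{-},C)$. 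The paper's argument is shorter and uses only the bare adjunction; yours makes the underlying mechanism explicit and bypasses the ``every target is a matrix ring'' observation. For (i)$\Leftrightarrow$(ii) the two proofs are close: you route (i)$\Rightarrow$(ii) through (iii) and the counit, while the paper reads it off the adjunction directly; for (ii)$\Rightarrow$(i) you phrase via dominions and Morita invariance what the paper does by explicitly inducing a matrix structure on the target. One small point worth spelling out in your coproduct step: when lifting $\beta_1,\beta_2\colon M_n(B)\to T$ with $\beta_1 f=\beta_2 f$ to an admissible pair out of $M_n(k)\ast_k M_n(B)$, you need a \emph{common} $M_n(k)$-component $\alpha$; the natural choice $\alpha=\beta_1|_{M_n(k)}$ (using $M_n(k)\subseteq M_n(B)$) works because $\beta_1$ and $\beta_2$ agree on $k$.
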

\begin{proof}
$(i) \Leftrightarrow (ii):$ Provided that $f: A \to M_n(B)$ is an epimorphism, then the dominion of $f(A)$ in $M_n(B)$ contains all of $M_n(B)$. The fact that $f: A \to M_n(B)$ is an epimorphism implies that $\mathrm{Hom}_{k-\mathsf{Alg}}(f,M_n(C))$ is injective for any $C$. However, the adjunction implies that so is $\mathrm{Hom}_{k-\mathsf{Alg}}(f',C)$ where $f'$ corresponds to $f$ under the natural isomorphism $\mathrm{Hom}_{k-\mathsf{Alg}}(A,M_n(B)) \cong \mathrm{Hom}_{k-\mathsf{Alg}}(\sqrt[n]{A},B))$. Therefore, $f': \sqrt[n]{A} \to B$ is an epimorphism.\\\\
Suppose that the dominion of $f(A)$ in $M_n(B)$ contains all $e_{ij}$ for $1 \leq i, j \leq n$ and the corresponding map $f': \sqrt[n]{A} \to B$ is an epimorphism. Let us have $C$ and two maps $g_1, g_2: M_n(B) \to C$ such that $g_1 f = g_2 f$. We observe that images of $e_ij$ under $g_1$ and $g_2$ are equal as they lie in the dominion of $f(A)$ in $M_n(B)$ and they induce a matrix ring structure on $C$ represented by an isomorphism $h: C \to M_n(C')$ with $C'$ being the centralizer of all $g_1(e_{ij}) = g_2(e_{ij})$. Moreover, $h g_1$ and $h g_2$ restrict to maps $g'_1, g'_2$ between $B$ and $C'$ because $B$ as centralizer of $e_{ij}$ maps to the centralizer of $g_1(e_{ij})$ or $g_2(e_{ij})$ under $g_1$ or $g_2$, respectively. In other words, $M_n(g'_1) = h g_1$ and similarly for $g_2$. This allows us to employ the adjunction: $M_n(g'_1) f = M_n(g'_2) f$ means that $g'_1 f' = g'_2 f'$, but, as $f'$ is an epimorphism, $g'_1$ need to equal $g'_2$. It follows easily that $g_1 = g_2$.\\\\
$(i) \Leftrightarrow (iii):$ Let us have a homomorphism $g: M_n(B) \to C'$. There is a $C$ a $k-$algebra such that $C' \cong M_n(C)$. Hence, $f: A \to M_n(B)$ is an epimorphism is equivalent to: $$\mathrm{Hom}_{k-\mathsf{Alg}}(f,M_n(C)): \mathrm{Hom}_{k-\mathsf{Alg}}(A, M_n(C)) \to \mathrm{Hom}_{k-\mathsf{Alg}}(M_n(B),M_n(C))$$ being surjective for every $k-$algebra $C$, which, by the adjuction, is equivalent to $\mathrm{Hom}_{k-\mathsf{Alg}}(\sqrt[n]{f},C): \mathrm{Hom}_{k-\mathsf{Alg}}(\sqrt[n]{A}, C) \to \mathrm{Hom}_{k-\mathsf{Alg}}(\sqrt[n]{M_n(B)},C)$ being surjective for every $k-$algebra $C$. The last statement is equivalent to $\sqrt[n]{f}: \sqrt[n]{A} \to \sqrt[n]{M_n(B)}$ being an epimorphism.
\end{proof}

\begin{theorem}\label{TExtendingLocalisation}
Let $i: A \to A'$ be a homomorphism, and let $\ell_\Sigma: A \to A_\Sigma$ is a universal localisation such that $A_\Sigma \cong M_n(B)$. We claim that $A'_{\Sigma'}$ with $\Sigma' = \{\sigma \otimes_A A' \,|\, \sigma \in \Sigma\}$ is a factor of the universal localisation of $M_n(\sqrt[n]{A'})$ in induced maps arising only from relations in representation of $A_\Sigma$ as a factor of the universal localisation $M_n(\sqrt[n]{A})$ in in induced maps.
\end{theorem}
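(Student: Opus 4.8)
The plan is to transport the whole construction behind the corollary following Lemma~\ref{LLocalisationAsFactor} forward along $i$. The single genuinely new point is that the identification $A_\Sigma\cong M_n(B)$ must first be pushed to $A'_{\Sigma'}$, so that $A'_{\Sigma'}$ acquires an $n\times n$ matrix structure and the matrix‑reduction adjunction becomes available over $A'$; everything else is then a formal consequence of Lemma~\ref{LLocalisationAsFactor} and the adjunction $\sqrt[n]{-}\dashv M_n(-)$.

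First I would build a comparison homomorphism $\overline{i}\colon A_\Sigma\to A'_{\Sigma'}$. Since $\Sigma'=\{\sigma\otimes_A A'\mid\sigma\in\Sigma\}$, for $\sigma\colon P\to Q$ in $\Sigma$ one has $\sigma\otimes_A A'_{\Sigma'}=(\sigma\otimes_A A')\otimes_{A'}A'_{\Sigma'}$, which is invertible, so $\ell_{\Sigma'}\circ i$ is $\Sigma$‑inverting; Theorem~\ref{Theorem10} then yields a unique (necessarily unital) ring homomorphism $\overline{i}$ with $\overline{i}\,\ell_\Sigma=\ell_{\Sigma'}\,i$. Restricting $\overline{i}$ along $M_n(k)\hookrightarrow M_n(B)\cong A_\Sigma$ gives a unital homomorphism $M_n(k)\to A'_{\Sigma'}$, and, exactly as in the proof of Proposition~\ref{PMatrixReductionEpimorphism}, the images of the $e_{ij}$ determine an isomorphism $A'_{\Sigma'}\cong M_n(C')$ with $C'$ the centraliser of those images, under which $\overline{i}(e_{ij})$ is the standard matrix unit. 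The adjunction now rewrites $\ell_{\Sigma'}\colon A'\to M_n(C')$ as $M_n(\lambda')\circ\mu_{A',n}$ for a unique $\lambda'\colon\sqrt[n]{A'}\to C'$; set $g'=M_n(\lambda')\colon M_n(\sqrt[n]{A'})\to A'_{\Sigma'}$, so that $\ell_{\Sigma'}=g'\circ\mu_{A',n}$.

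Applying Lemma~\ref{LLocalisationAsFactor} with $A',\Sigma',\mu_{A',n},g'$ in the roles of $A,\Sigma,f,g$ produces a surjection $h'\colon M_n(\sqrt[n]{A'})_{\widehat{\Sigma'}}\twoheadrightarrow A'_{\Sigma'}$ with $g'=h'\circ\ell_{\widehat{\Sigma'}}$, where $\widehat{\Sigma'}=\Sigma'\otimes_{A'}M_n(\sqrt[n]{A'})=\{\sigma\otimes_A M_n(\sqrt[n]{A'})\mid\sigma\in\Sigma\}$ is precisely the set of induced maps; this already exhibits $A'_{\Sigma'}$ as a factor of that universal localisation. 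To identify the kernel, recall from the corollary following Lemma~\ref{LLocalisationAsFactor} that $A_\Sigma\cong M_n(\sqrt[n]{A})_{\widehat\Sigma}/J$ with $\widehat\Sigma=\Sigma\otimes_A M_n(\sqrt[n]{A})$ and $J=\ker\pi$ for the canonical surjection $\pi$. Functoriality of $M_n(\sqrt[n]{-})$ on $i$ gives $\psi\colon M_n(\sqrt[n]{A})\to M_n(\sqrt[n]{A'})$ fixing the $e_{ij}$ and satisfying $\psi\circ\mu_{A,n}=\mu_{A',n}\circ i$; it carries $\widehat\Sigma$ into $\widehat{\Sigma'}$ and hence extends to $\psi_\Sigma\colon M_n(\sqrt[n]{A})_{\widehat\Sigma}\to M_n(\sqrt[n]{A'})_{\widehat{\Sigma'}}$. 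I would then verify that the square with sides $\pi,\psi_\Sigma,\overline{i},h'$ commutes: as $\ell_{\widehat\Sigma}$ is a ring epimorphism and $M_n(\sqrt[n]{A})\cong M_n(k)\ast_k A$ is generated by the matrix units together with $\mu_{A,n}(A)$, it suffices to compare the two composites on $e_{ij}$ (both give $\overline{i}(e_{ij})$) and on $\mu_{A,n}(a)$ (both give $\ell_{\Sigma'}(i(a))$, using $\psi\mu_{A,n}=\mu_{A',n}i$, the relations $\pi\,\ell_{\widehat\Sigma}\,\mu_{A,n}=\ell_\Sigma$ and $h'\,\ell_{\widehat{\Sigma'}}\,\mu_{A',n}=\ell_{\Sigma'}$ coming from the constructions of $\pi$ and $h'$, and the defining property of $\overline{i}$). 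Commutativity yields $h'(\psi_\Sigma(J))=\overline{i}(\pi(J))=0$, so $h'$ factors through $M_n(\sqrt[n]{A'})_{\widehat{\Sigma'}}\big/\langle\psi_\Sigma(J)\rangle$ and presents $A'_{\Sigma'}$ as a factor of this ring, the extra relations being exactly the images of the relations $J$ that present $A_\Sigma$ as a factor of $M_n(\sqrt[n]{A})_{\widehat\Sigma}$.

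I expect the main obstacle to be the transfer $A_\Sigma\cong M_n(B)\rightsquigarrow A'_{\Sigma'}\cong M_n(C')$ and, more subtly, ensuring that the matrix structure on $A'_{\Sigma'}$ extracted from $\overline{i}$ is the very same one used to define $g'$ (through $\lambda'$) and therefore $h'$ — this compatibility is what makes the final commuting‑square computation, and with it the identification of $\ker h'$, go through. A lesser nuisance is bookkeeping the two routes $\Sigma\rightsquigarrow\Sigma'\rightsquigarrow\widehat{\Sigma'}$ and $\Sigma\rightsquigarrow\widehat\Sigma\rightsquigarrow\widehat{\Sigma'}$ and checking they yield the same induced maps up to the evident base‑change identifications; once that is in hand, one may even replace $\widehat{\Sigma'}$ by just those induced maps occurring in the presentation of $A_\Sigma$ without changing the localisation.
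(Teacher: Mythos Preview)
Your argument cleanly produces the surjection $h'\colon M_n(\sqrt[n]{A'})_{\widehat{\Sigma'}}\twoheadrightarrow A'_{\Sigma'}$ via Lemma~\ref{LLocalisationAsFactor}, and the commuting square correctly shows $I'\coloneqq\langle\psi_\Sigma(J)\rangle\subseteq\ker h'$, so that $h'$ descends to a surjection $\bar h'\colon M_n(\sqrt[n]{A'})_{\widehat{\Sigma'}}/I'\twoheadrightarrow A'_{\Sigma'}$. But the theorem asserts more than this: the phrase ``arising \emph{only} from relations in the representation of $A_\Sigma$'' means that $\ker h'=I'$, i.e.\ that $\bar h'$ is an \emph{isomorphism}; this is exactly how the result is used in Proposition~\ref{PExtendingExceptional}. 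Your proof stops at the inclusion $I'\subseteq\ker h'$ and never addresses the reverse inclusion; the sentence ``the extra relations being exactly the images of the relations $J$'' is asserted but not argued.

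The paper closes this gap by a step you do not have: it shows directly that the composite
\[
A'\xrightarrow{\mu_{A',n}} M_n(\sqrt[n]{A'})\xrightarrow{\ell_{\Sigma'_n}} M_n(\sqrt[n]{A'})_{\Sigma'_n}\xrightarrow{\pi_{I'}} M_n(\sqrt[n]{A'})_{\Sigma'_n}/I'
\]
is a ring epimorphism. The argument uses dominions and Proposition~\ref{PMatrixReductionEpimorphism}: the matrix units $e_{ij}$ lie in the dominion of the image of $A'$ because they already lie in the dominion of the image of $A$ in $A_\Sigma\cong M_n(\sqrt[n]{A})_{\Sigma_n}/I$ and are carried over by $i_3$; on the matrix-reduced level the map $\sqrt[n]{A'}\to\sqrt[n]{A'}_{\Sigma'_n}/I'$ is an epimorphism as a composite of a universal localisation with a surjection. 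Once this epimorphism is in hand, the $\Sigma'$-inverting property of the displayed composite yields a map $p\colon A'_{\Sigma'}\to M_n(\sqrt[n]{A'})_{\Sigma'_n}/I'$ with $\bar h'\circ p=\mathrm{id}$ (since $\ell_{\Sigma'}$ is epi) and $p\circ\bar h'=\mathrm{id}$ (since the displayed composite is epi), forcing $\bar h'$ to be an isomorphism. Your proposal contains no analogue of this step, and without it the conclusion does not follow.
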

\begin{proof}
At first, we denote $I$ the ideal of $M_n(\sqrt[n]{A})$ such that $M_n(\sqrt[n]{A})/I \cong A_\Sigma$. We claim that there exists the following commutative diagram:
$$\begin{tikzcd}
A \arrow[']{d}{\mu_{A, n}} \arrow{r}{f} & A' \arrow{d}{\mu_{A',n}}\\
M_n(\sqrt[n]{A}) \arrow[']{d}{\ell_{\Sigma_n}} \arrow{r}{i_1 = M_n(\sqrt[n]{f})} & M_n(\sqrt[n]{A'}) \arrow{d}{\ell_{\Sigma'_n}}\\
M_n(\sqrt[n]{A})_{\Sigma_n} \arrow{d}[']{\pi_i} \arrow{r}{i_2 = (i_1)_{\Sigma_n}} & M_n(\sqrt[n]{A'})_{\Sigma'_n} \arrow{d}{\pi_{I'}}\\
M_n(\sqrt[n]{A})_{\Sigma_n}/I \arrow{r}{i_3} & M_n(\sqrt[n]{A'})_{\Sigma'_n}/I'
\end{tikzcd}$$
Where $\Sigma_n = \{\sigma \otimes_A M_n(\sqrt[n]{A}) \,|\, \sigma \in \Sigma\}$, $\Sigma'_n = \{\sigma \otimes_A M_n(\sqrt[n]{A'}) \,|\, \sigma \in \Sigma\}$ and $I'$ is the ideal generated by images of elements of $I$ under $i_2$. Each horizontal map below $i$ is obtained using the universal property of the vertical map on the left applied to the composition of the horizontal map above and the vertical map on the right. Moreover, all horizontal maps between the matrix algebras respect the matrix structure. In other words, they are $M_n$ of some map between the centralizers of $e_{ij}$.

We will prove two statements:
\begin{enumerate}[(i)]
    \item The map $\pi_{I'} \circ \ell_{\Sigma'_n} \circ \mu_{A', n}: A' \to M_n(\sqrt[n]{A'})_{\Sigma'_n}/I'$ from the commutative diagram map is an epimorphism.
    \item The universal localisation $\ell_{\Sigma'}: A' \to A'_{\Sigma'}$ factors through said map $A' \to M_n(\sqrt[n]{A'})_{\Sigma'_n}/I'$.
\end{enumerate}
Together, these two statements imply that said map $A' \to M_n(\sqrt[n]{A'})_{\Sigma'_n}/I'$ is the universal localisation of $A'$ in $\Sigma'$.

The fact that the map $A \to M_n(\sqrt[n]{A})_{\Sigma}/I$ from the commutative diagram map is an epimorphism yields that $e_{ij}$ for $1 \leq i, j \leq n$ lie in the dominion of $A$ in $M_n(\sqrt[n]{A})_{\Sigma}/I$. Combining it with the fact that homomorphic image of a dominion of a subring lies in the dominion of the homomorphic image of the subring gives us that $e_{ij}$ for $1 \leq i, j \leq n$ lie in the dominion of $i_3 \circ \pi_i \circ \ell_{\Sigma_n} \circ \mu_{A, n}$ in $M_n(\sqrt[n]{A'})_{\Sigma'_n}/I'$, specifically also in the dominion of image of $A'$. We recall that taking universal localisation or a factor by an ideal effectively commutes with forming a matrix algebra; this gives that the said map $A' \to M_n(\sqrt[n]{A'})_{\Sigma'_n}/I'$ corresponds, by abuse of notation, to a map $\sqrt[n]{A'} \to \sqrt[n]{A'}_{\Sigma'_n}/I'$, which is an epimorphism. We have now established that the said map $A' \to M_n(\sqrt[n]{A'})_{\Sigma'_n}/I'$ is an epimorphism.

Finally, we ought to show that the universal localisation $\ell_{\Sigma'}: A' \to A'_{\Sigma'}$ factors through said map $A' \to M_n(\sqrt[n]{A'})_{\Sigma'_n}/I'$. Consider the following commutative diagram:
$$\begin{tikzcd}
A \arrow{r}{} \arrow{d}{} & M_n(\sqrt[n]{A})_{\Sigma_n} \arrow{r}{} \arrow{d}{} & M_n(\sqrt[n]{A})_{\Sigma_n}/I \arrow[dotted]{d}{}\\
A' \arrow{r}{} & M_n(\sqrt[n]{A'})_{\Sigma'_n} \arrow{r}{} & A'_{\Sigma'_n}
\end{tikzcd}$$
The dotted map results from using the universal property of $\ell_\Sigma$ to the map $A \to M_n(\sqrt[n]{A})_{\Sigma_n} \to M_n(\sqrt[n]{A'})_{\Sigma'_n} \to A'_{\Sigma'}$, which is a $\Sigma-$inverting map. Commutativity of the square on the right is due to the universal property of $A \to M_n(\sqrt[n]{A})$ and to $M_n(\sqrt[n]{A}) \to M_n(\sqrt[n]{A})_{\Sigma_n}$ being an epimorphism; since $A'_{\Sigma'}$ can be given structure of $n \times n$ matrices over some algebra, the image of $A$ fully determines the image of $M_n(\sqrt[n]{A})$, and also $M_n(\sqrt[n]{A})_{\Sigma_n}$, indeed.
From the commutative diagram, we read that images of elements of $I$ in $M_n(\sqrt[n]{A'})_{\Sigma'_n}$ need to map to zero, so the morphism $M_n(\sqrt[n]{A'})_{\Sigma'_n} \to A'_{\Sigma'}$ factors through $M_n(\sqrt[n]{A'})_{\Sigma'_n}/I'$ as $I'$ is generated by such elements.
\end{proof}
Let us have a path algebra $kQ$ with the vertices of $Q_0$ labelled $1,\dots,n$ and a dimension vector $\alpha \in \mathbb{N}_0^{Q_0}$. We will construct a map $q_{Q,\alpha}$ from $kQ$ to $M_\alpha(k\langle X_{Q,\alpha} \rangle)$ which is the algebra of $\sum_{i=1}^n \alpha_i \times \sum_{i=1}^n \alpha_i$ matrices over with:
$$X_{Q,\alpha} = \bigcup_{e \in Q_1} \{x^{(e)}_{ij} \,|\, 1 \leq i \leq \alpha_{s(e)}, 1 \leq j \leq \alpha_{s(e)}\}$$
In order to simplify working with $\sum_{i=1}^n \alpha_i \times \sum_{i=1}^n \alpha_i$ matrices over $k\langle X_{Q,\alpha} \rangle$, we think of elements of $M_\alpha(k\langle X_{Q,\alpha} \rangle)$ as of $n \times n$ block matrices such that the block at position $ij$ for $1 \leq i, j \leq n$ is an $\alpha_i \times \alpha_j$ matrix.

For each $m \in Q_0$, we set $q_{Q,\alpha}(e_m)_{ij} = I_{\alpha_m}$ for $i = j = m$ and $q_{Q,\alpha}(e_m)_{ij} = 0$, otherwise, when viewed as an $n \times n$ block matrix. For each $e \in Q_1$, we set $q_{Q,\alpha}(e)_{ij} = X^{(e)}_{Q,\alpha}$ for $i = s(e), j = s(e)$ and $q_{Q,\alpha}(e)_{ij} = 0$, otherwise; $X^{(e)}_{Q,\alpha}$ is a $\alpha_{t(e)} \times \alpha_{s(e)}$ matrix with $x^{(e)}_{ij}$ at position $ij$.

Thus, by universal property of the path algebra $kQ$, this assignment can be extended to a map from $q_{Q,\alpha}: kQ \to M_\alpha(k\langle X_{Q,\alpha} \rangle)$. This map factors through $\mu_{A,n}$ via $q'(Q,\alpha)$, and $q'_{Q, \alpha}$ is surjective as the entries of matrices of image of $q_{Q,\alpha}$ generate $k\langle X_{Q,\alpha} \rangle)$ as a $k-$algebra by discussion below Definition \ref{DMatrixReductionFunctor}.

\begin{proposition}\label{PCanonicalForm}
Suppose $f: kQ \to R'$ is a map and $S$ is a module over $R$ such that $e_i (kQ) \otimes_{kQ} R' \cong S^{\alpha_i}$ for all $1 \leq i \leq n$, then there exists $R$ such that $R' \cong M_n(R)$. The map $f$ viewed now as going from $kQ$ to $M_n(R')$ uniquely factors via $M_n(f')$ through $q_{Q, \alpha}$.
\end{proposition}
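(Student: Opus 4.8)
The plan is to build the matrix decomposition $R' \cong M_\alpha(R)$ out of the idempotent decomposition of $R'$ induced by the vertices of $Q$, arranging the isomorphism so that it carries the vertex idempotents to the \emph{standard} block idempotents, and then to read off the factorisation of $f$ from the corepresenting property of $q_{Q,\alpha}$. Write $N := \sum_{v\in Q_0}\alpha_v$, so that $M_\alpha(-)$ is the ring of $N\times N$ matrices organised into blocks indexed by $Q_0$ (this $N$ is the ``$n$'' of the statement). First I would note that $\{f(e_v)\mid v\in Q_0\}$ is a complete set of orthogonal idempotents of $R'$, since $\{e_v\}$ is one for $kQ$ and $f$ is a unital homomorphism; hence $R'_{R'}=\bigoplus_{v\in Q_0}f(e_v)R'$. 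The standard isomorphism of right $R'$-modules $e_v(kQ)\otimes_{kQ}R'\cong f(e_v)R'$ (sending $e_v\otimes 1$ to $f(e_v)$) together with the hypothesis gives $f(e_v)R'\cong S^{\alpha_v}$, so there is a right $R'$-module isomorphism $\theta\colon R'_{R'}\xrightarrow{\sim}\bigoplus_{v\in Q_0}S^{\alpha_v}=S^{N}$, which I would choose blockwise, $\theta=\bigoplus_v\theta_v$ with $\theta_v\colon f(e_v)R'\xrightarrow{\sim}S^{\alpha_v}$, so that the $v$-th idempotent summand is sent onto the $v$-th block. (If $S=0$ then $R'=0$ and the statement is vacuous, so assume $S\neq 0$.)

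Next I would set $R:=\mathrm{End}_{R'}(S)$ and transport the canonical ring isomorphism $R'\xrightarrow{\sim}\mathrm{End}_{R'}(R'_{R'})$ given by left multiplications along $\theta$, using the canonical ring isomorphism $\mathrm{End}_{R'}(S^{N})\cong M_\alpha(R)$ (endomorphisms of a finite direct sum written and composed as matrices of endomorphisms), to obtain a ring isomorphism $\Phi\colon R'\xrightarrow{\sim}M_\alpha(R)$; this already yields $R'\cong M_\alpha(R)$. The point of having chosen $\theta$ blockwise is that left multiplication by $f(e_v)$ is the projection of $R'_{R'}$ onto $f(e_v)R'$ along the remaining summands, so $\Phi(f(e_v))$ is precisely the block-diagonal idempotent with $I_{\alpha_v}$ in the $(v,v)$ block, i.e. $\Phi(f(e_v))=q_{Q,\alpha}(e_v)$ for every $v\in Q_0$. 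Manufacturing the Morita isomorphism directly from the idempotent decomposition in this way, rather than producing it abstractly and trying to match afterwards, deliberately sidesteps any need for a cancellation property of $S$.

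For the factorisation I would then regard $f$ as the homomorphism $\Phi\circ f\colon kQ\to M_\alpha(R)$, which sends $e_v\mapsto q_{Q,\alpha}(e_v)$ for all $v$, and apply the corepresenting property of $q_{Q,\alpha}$: for any $k$-algebra $C$ and any homomorphism $g\colon kQ\to M_\alpha(C)$ with $g(e_v)=q_{Q,\alpha}(e_v)$ for all $v$, there is a unique $k$-algebra homomorphism $f'\colon k\langle X_{Q,\alpha}\rangle\to C$ with $g=M_\alpha(f')\circ q_{Q,\alpha}$. This is immediate from the universal property of the path algebra (Theorem 1.8 in chapter II of \cite{assem2006elements}): once the idempotents are pinned down, such a $g$ is the same datum as a choice, for each arrow $e$, of a matrix $g(e)=g(e_{t(e)})\,g(e)\,g(e_{s(e)})$ supported in the $(t(e),s(e))$ block, whose entries are the images of the generators $x^{(e)}_{ij}$; extending that assignment freely to $k\langle X_{Q,\alpha}\rangle$ recovers $g$, and uniqueness holds because the $x^{(e)}_{ij}$ generate $k\langle X_{Q,\alpha}\rangle$ (alternatively one can route this through the matrix reduction adjunction, using that $q'_{Q,\alpha}$ is surjective). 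Taking $C=R$ and $g=\Phi\circ f$ produces the required unique $f'$ with $\Phi\circ f=M_\alpha(f')\circ q_{Q,\alpha}$.

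I expect the \textbf{main obstacle} to be exactly the compatibility exploited in the third step: one must genuinely arrange the isomorphism $R'\cong M_\alpha(R)$ so that $f(e_v)$ lands on the standard block idempotent $q_{Q,\alpha}(e_v)$ and not merely on a conjugate of it --- otherwise $f$ would only factor through a conjugated copy of $q_{Q,\alpha}$ --- and one must keep the side conventions consistent (right $R'$-modules, endomorphisms acting on the left, and the ensuing matrix structure on $\mathrm{End}_{R'}(S^{N})$). Once this is set up, the remainder is routine bookkeeping with the universal properties of $kQ$ and of $\otimes_{kQ}$.
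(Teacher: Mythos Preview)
Your proposal is correct and follows essentially the same approach as the paper: decompose $R'_{R'}\cong S^{N}$ via the vertex idempotents, set $R=\mathrm{End}_{R'}(S)$ to obtain $R'\cong M_\alpha(R)$, arrange the isomorphism so that each $f(e_v)$ lands on the standard block idempotent, and then read off $f'$ from the entries of the arrow blocks using the universal property of $kQ$ and the freeness of $k\langle X_{Q,\alpha}\rangle$. The paper compresses your careful blockwise choice of $\theta$ into a single ``without loss of generality'' and constructs $f'$ directly by $x^{(e)}_{ij}\mapsto (f(e)_{t(e),s(e)})_{ij}$, but the substance is identical.
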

\begin{proof}
Since $kQ \cong e_1 (kQ) \oplus \dots \oplus e_n (kQ)$ as a right module over itself and $kQ \otimes_{kQ} R' \cong R'$, it follows that $R' \cong S^{\alpha_1} \oplus \dots \oplus S^{\alpha_n}$. Hence, there is an isomorphism $R' \to M_\alpha(\mathrm{End}_{R'}(S))$. Set $R = \mathrm{End}_{R'}(S)$, and let us work with $f$ as with a map from $kQ \to M_n(R)$.
Without loss of generality, we can assume that $f(e_m)_{ij} = I_{\alpha_m}$ for $i = j = m$ and $f(e_m)_{ij} = 0$, otherwise. For an arrow $e \in Q_1$ from $s(e)$ to $t(e)$, this means that $f(e)_{ij} \neq 0$ if and only if $i = t(e)$ and $j = s(e)$ and $f(e)_{(t(e))(s(e))}$ is an $\alpha_{t(e)} \times \alpha_{s(e)}$ matrix. We define $f' k\langle X_{Q, \alpha} \rangle \to R$ such that $x_{ij}^{(e)} \mapsto (f(e)_{(t(e))(s(e))})_{ij}$; we obtain $f = M_n(f') \circ q_{Q, \alpha}$. Uniqueness thereof follows from the same universal property of $k\langle X_{Q, \alpha} \rangle$.
\end{proof}

Using the theorem above, we can formulate another generalization of Theorem \ref{Theorem20}, which in extending universal localisation from a smaller path algebra to a larger one.

\begin{proposition}\label{PExtendingExceptional}
Let $Q$ be a finite acyclic quiver and $M \in \mathsf{mod}-kQ$ be an exceptional module of dimension vector $\alpha$, then the map $f: kQ \to M_\alpha(k)$ is a universal localisation in $\Sigma$ set of finitely generated maps between projectives. Let $Q'$ be a finite acyclic quiver such that $Q_0 = Q'_0$ and arrows in $Q$ arise from paths in $Q'$; $kQ$ is a naturally subalgebra of $kQ'$. Then, the universal localisation of $kQ'$ in the set maps induced from $\Sigma$ has the following form:
$$kQ'_{\Sigma'} \cong M_\alpha (k\langle X_{Q', \alpha} \rangle)/I$$
where $I$ is generated by $q_{Q', \alpha}(a) - \iota f(a)$ for all $a \in kQ$ viewed as a subalgebra of $kQ'$ with $\iota$ being $M_\alpha$ of the map $k \to k\langle X_{Q', \alpha} \rangle$.
\end{proposition}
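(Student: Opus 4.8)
The plan is to exhibit $kQ'_{\Sigma'}$ directly as $M_\alpha(k\langle X_{Q',\alpha}\rangle)/I$ by showing that the canonical homomorphism $\bar q := \pi_I\circ q_{Q',\alpha}\colon kQ'\to M_\alpha(k\langle X_{Q',\alpha}\rangle)/I$ (with $\pi_I$ the quotient map) is a universal $\Sigma'$-inverting homomorphism out of $kQ'$; by the universal property in Theorem \ref{Theorem10}(ii) this identifies its target with $kQ'_{\Sigma'}$. This is the path-algebra incarnation of Theorem \ref{TExtendingLocalisation}, with the abstract matrix-reduction data replaced by the explicit algebra $k\langle X_{Q',\alpha}\rangle$. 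Before starting I would record that $f\colon kQ\to M_\alpha(k)$ is indeed a universal localisation in a set $\Sigma$ of maps between finitely generated projectives (Proposition \ref{Proposition19}, together with the remark that restrictions of $M_\alpha(k)$-modules have dimension vector a multiple of $\alpha$), and I would normalise the coordinates as in Theorem \ref{Theorem20} so that $f(e_m)$ equals the block-diagonal idempotent $q_{Q',\alpha}(e_m)$ inside $M_\alpha(k)\subseteq M_\alpha(k\langle X_{Q',\alpha}\rangle)$.

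First I would check that $\bar q$ inverts $\Sigma'$. The defining generators $q_{Q',\alpha}(a)-\iota f(a)$ of $I$ give $\bar q(a)=(\pi_I\circ\iota)(f(a))$ for all $a\in kQ$, so $\bar q|_{kQ}$ factors through $f=\ell_\Sigma$. Consequently, for $\sigma\in\Sigma$ the map $\sigma\otimes_{kQ}(M_\alpha(k\langle X_{Q',\alpha}\rangle)/I)$ is the base change along $M_\alpha(k)\to M_\alpha(k\langle X_{Q',\alpha}\rangle)/I$ of the invertible map $\sigma\otimes_{kQ}M_\alpha(k)$, hence is invertible, and rewriting $\sigma\otimes_{kQ}(-)=(\sigma\otimes_{kQ}kQ')\otimes_{kQ'}(-)$ shows that $\bar q$ inverts $\Sigma'$. (The same computation with $kQ'_{\Sigma'}$ in place of the quotient shows that $kQ\hookrightarrow kQ'\xrightarrow{\ell_{\Sigma'}}kQ'_{\Sigma'}$ factors through $f$.)

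Next I would prove the universal property. Let $h\colon kQ'\to T$ be any $\Sigma'$-inverting homomorphism. Then $h|_{kQ}$ is $\Sigma$-inverting, so it factors through $f$ via a unique $\bar h\colon M_\alpha(k)\to T$, and for each vertex $i$ one computes $e_i(kQ')\otimes_{kQ'}T=e_i(kQ)\otimes_{kQ}T=(f(e_i)M_\alpha(k))\otimes_{M_\alpha(k)}T\cong S^{\alpha_i}$, where $S:=S_0\otimes_{M_\alpha(k)}T$ for $S_0$ the simple $M_\alpha(k)$-module and one uses $\operatorname{rk}f(e_i)=\alpha_i$. Applying Proposition \ref{PCanonicalForm} to $h$ (with $Q'$ in place of $Q$) equipped with this decomposition yields $T\cong M_\alpha(\operatorname{End}_T(S))$ together with a unique factorisation $h=M_\alpha(h')\circ q_{Q',\alpha}$. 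The key observation is that, because the summands of $T=\bigoplus_i S^{\alpha_i}$ are exactly the ranges of the orthogonal idempotents $\bar h(e_{jj})$, the isomorphism $T\cong M_\alpha(\operatorname{End}_T(S))$ carries the matrix units to the $\bar h(e_{ij})$; hence the \emph{scalar} embedding $M_\alpha(k)\hookrightarrow M_\alpha(\operatorname{End}_T(S))\cong T$ coincides with $\bar h$. Therefore, for $a\in kQ$,
\[ M_\alpha(h')(\iota f(a))=\bar h(f(a))=h(a)=M_\alpha(h')(q_{Q',\alpha}(a)), \]
so $M_\alpha(h')$ annihilates $I$ and induces $\tilde h\colon M_\alpha(k\langle X_{Q',\alpha}\rangle)/I\to T$ with $\tilde h\circ\bar q=h$.

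Finally I would dispatch uniqueness. If $\tilde h_1\bar q=\tilde h_2\bar q$, then $\tilde h_1\pi_I$ and $\tilde h_2\pi_I$ agree on $\operatorname{im}q_{Q',\alpha}$, while the composites $\tilde h_i\circ\pi_I\circ\iota\colon M_\alpha(k)\to T$ agree on $f(kQ)$ (each equals $h|_{kQ}$ precomposed with $f$, computed via $q_{Q',\alpha}(a)\equiv\iota f(a)$), hence agree on all of $M_\alpha(k)$ since $f$ is a ring epimorphism and so the dominion of $f(kQ)$ in $M_\alpha(k)$ is everything (Theorem \ref{Theorem3}); as $M_\alpha(k\langle X_{Q',\alpha}\rangle)$ is generated as a $k$-algebra by the matrix units $e_{ij}$ and $\operatorname{im}q_{Q',\alpha}$ (the entries $x^{(e)}_{ij}$ being recovered as $e_{ab}\,q_{Q',\alpha}(e)\,e_{cd}$ for suitable indices and then relocated by further matrix units), this forces $\tilde h_1\pi_I=\tilde h_2\pi_I$, hence $\tilde h_1=\tilde h_2$ because $\pi_I$ is surjective. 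I expect the penultimate paragraph's identification of the scalar embedding with $\bar h$ to be the main obstacle: it is precisely there that one must combine the fact that $M$ is exceptional (so that $f$ is a genuine universal localisation, not merely a ring epimorphism) with careful bookkeeping of the matrix structure supplied by Proposition \ref{PCanonicalForm}. An essentially equivalent alternative would instead derive the presentation from Theorem \ref{TExtendingLocalisation} applied to $kQ\hookrightarrow kQ'$ and then rewrite the factor of the universal localisation of $M_N(\sqrt[N]{kQ'})$ appearing there via the surjection $\sqrt[N]{kQ'}\twoheadrightarrow k\langle X_{Q',\alpha}\rangle$, $N=\sum_i\alpha_i$, noting that modulo $I$ the induced maps $\Sigma$ are already invertible so that no residual localisation survives.
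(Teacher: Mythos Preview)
Your argument is correct and is in fact more explicit than the paper's. The paper's proof is a one-line invocation of Theorem \ref{TExtendingLocalisation}: one applies that theorem to the inclusion $kQ\hookrightarrow kQ'$, which exhibits $kQ'_{\Sigma'}$ as a quotient of $M_N(\sqrt[N]{kQ'})_{\Sigma'_N}$ by the ideal generated by the relations that cut out $M_\alpha(k)$ from $M_N(\sqrt[N]{kQ})_{\Sigma_N}$, and then passes from the matrix-reduction algebra $\sqrt[N]{kQ'}$ to the concrete free algebra $k\langle X_{Q',\alpha}\rangle$ via the surjection $q'_{Q',\alpha}$ (using Proposition \ref{PCanonicalForm} and Lemma \ref{Lemma15} to match the data). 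You instead verify the universal property of $\ell_{\Sigma'}$ for $\bar q$ directly: you use Proposition \ref{PCanonicalForm} not to translate between $\sqrt[N]{kQ'}$ and $k\langle X_{Q',\alpha}\rangle$, but to produce the factorisation $h=M_\alpha(h')\circ q_{Q',\alpha}$ for an arbitrary $\Sigma'$-inverting $h$, and you use the ring-epimorphism property of $f$ (dominion argument) to pin down uniqueness. Your route avoids the abstract machinery of the $n$-matrix reduction functor entirely and makes transparent exactly where the hypothesis that $M$ is \emph{exceptional} (and not merely a brick) enters---namely in the existence of $\bar h$, which requires $f$ to be a genuine universal localisation so that any $\Sigma$-inverting map factors through it. The paper's route, by contrast, packages that step inside Theorem \ref{TExtendingLocalisation} and buys a cleaner deduction at the cost of quoting a heavier general result. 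Your closing remark already identifies the paper's route as the ``essentially equivalent alternative,'' which is accurate.
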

\begin{proof}
This is an easy consequence of Lemma \ref{Lemma15}, Theorem \ref{TExtendingLocalisation}, Proposition \ref{PCanonicalForm}, and the fact that $q_{Q',\alpha}$ factors through $\mu_{kQ', \alpha}$ via a surjective map.
\end{proof}

As mentioned above, Proposition \ref{PSimpleGluingEpimorphisms} can be generalized in the context of universal localisations; this idea is formalized in the following proposition.

\begin{proposition}
Suppose that we have finite acyclic quivers $Q_1$ and $Q_2$ and universal localisations $f_{\Sigma_1}: kQ_1 \to M_{\alpha_1}(k\langle x_1, \dots, x_{n_1} \rangle)$ and $f_{\Sigma_2}: kQ_2 \to M_{\alpha_2}(k\langle x_1, \dots, x_{n_2} \rangle)$ for $\Sigma_1$ and $\Sigma_2$ sets of maps between finitely generated projectives over $kQ_1$ and $kQ_2$, respectively.

Let $Q$ be a quiver such that $Q_0 = (Q_1)_0 \cup (Q_2)_0$, $Q_1$ such that for all $e \in Q'_1$ either is $s(e) \in (Q_1)_0, t(e) \in (Q_2)_0$ or $s(e) \in (Q_2)_0, t(e) \in (Q_1)_0$. Denote $\Sigma$ the set of maps between finitely generated projectives that correspond to maps in $\Sigma_1$ and $\Sigma_2$.

Then $kQ_\Sigma$ is Morita equivalent to representations of a quiver $Q'$ with two vertices $v_1$ and $v_1$ with $n_1$ loops for $v_1$, $n_2$ loops for $v_2$, $\alpha_1(s(e))\cdot \alpha_2(t(e))$ arrows from $v_1$ to $v_2$ for each $e \in Q_1$ with $s(e) \in (Q_1)_0$ and $t(e) \in (Q_2)_0$, and $\alpha_2(s(e))\cdot \alpha_1(t(e))$ arrows from $v_2$ to $v_1$ for each $e \in Q_1$ with $s(e) \in (Q_2)_0$ and $t(e) \in (Q_1)_0$.
\end{proposition}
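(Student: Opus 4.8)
The plan is to determine $kQ_\Sigma$ exactly. Write $\Lambda_i := k\langle x_1,\dots,x_{n_i}\rangle$ and $R := kQ_1\times kQ_2$, and regard $Q$ as the disjoint union $Q_1\sqcup Q_2$ together with the ``connecting'' arrows $C$, each of which runs between $(Q_1)_0$ and $(Q_2)_0$. By the ``tensor algebra in stages'' description of path algebras, $kQ\cong T_R(\mathcal B)$ where $\mathcal B = R\,(kC)\,R$ is the $R$-bimodule generated by $C$; summand by summand, $\mathcal B = \bigoplus_{e\in C}\bigl(kQ_{i(e)}\,e_{t(e)}\bigr)\otimes_k\bigl(e_{s(e)}\,kQ_{j(e)}\bigr)$, with $i(e),j(e)\in\{1,2\}$ recording the endpoints of $e$. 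The set $\Sigma$ is the image along $R\to kQ$ of $\Sigma_1\sqcup\Sigma_2$, a set of maps between finitely generated projective right $R$-modules; since base change preserves projectivity, $\Sigma$ consists of maps between finitely generated projective $kQ$-modules, so that Theorem~\ref{Theorem10} applies. The two-step idea is then: (1) compute $kQ_\Sigma$ as a tensor algebra over $R_\Sigma = M_{\alpha_1}(\Lambda_1)\times M_{\alpha_2}(\Lambda_2)$; (2) strip off the matrix factors by a Morita equivalence and recognise the result as $kQ'$.

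Step (1) rests on the general fact that universal localisation commutes with forming a tensor algebra: for any ring $R$, any $R$-bimodule $\mathcal B$, and any set $\Sigma$ of maps between finitely generated projective right $R$-modules, $\bigl(T_R(\mathcal B)\bigr)_{\Sigma\otimes_R T_R(\mathcal B)}\cong T_{R_\Sigma}\bigl(R_\Sigma\otimes_R\mathcal B\otimes_R R_\Sigma\bigr)$. I would prove this by comparing universal properties: a ring homomorphism out of $T_R(\mathcal B)$ amounts to an algebra map out of $R$ together with a compatible $R$-bimodule map out of $\mathcal B$, and it inverts $\Sigma\otimes_R T_R(\mathcal B)$ precisely when its restriction to $R$ inverts $\Sigma$; that restriction then factors uniquely through $R\to R_\Sigma$, while the bimodule map extends uniquely to an $R_\Sigma$-bimodule map out of $R_\Sigma\otimes_R\mathcal B\otimes_R R_\Sigma$ by base-change adjunction for bimodules, and assembling these via the universal property of the tensor algebra gives the isomorphism. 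Applying this to $R = kQ_1\times kQ_2$ and using that the universal localisation of a product is the product of the localisations (central idempotents, componentwise projectives), we obtain $R_\Sigma = M_{\alpha_1}(\Lambda_1)\times M_{\alpha_2}(\Lambda_2) =: \Gamma$ and therefore $kQ_\Sigma\cong T_\Gamma(\mathcal B^\Gamma)$ with $\mathcal B^\Gamma := \Gamma\otimes_R\mathcal B\otimes_R\Gamma$.

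For step (2), let $\varepsilon\in\Gamma$ be the idempotent equal to $e_{11}$ in each matrix factor. It is a full idempotent with $\varepsilon\Gamma\varepsilon = \Lambda_1\times\Lambda_2 =: \Lambda$, and, $\Gamma$ sitting in degree $0$ of $T_\Gamma(\mathcal B^\Gamma)$, it stays full there; since the corner algebra of a tensor algebra at a degree-$0$ full idempotent is the tensor algebra of the corner bimodule, $kQ_\Sigma\sim\varepsilon\,kQ_\Sigma\,\varepsilon = T_\Lambda(\varepsilon\,\mathcal B^\Gamma\,\varepsilon)$ ($\sim$ denoting Morita equivalence). It then remains to compute $\varepsilon\,\mathcal B^\Gamma\,\varepsilon = \varepsilon\Gamma\otimes_R\mathcal B\otimes_R\Gamma\varepsilon$ one summand at a time: for a connecting arrow $e$ with $s(e)\in(Q_2)_0$, $t(e)\in(Q_1)_0$, the functor $\varepsilon\Gamma\otimes_{kQ_1}(-)\otimes_{kQ_2}\Gamma\varepsilon$ carries the summand $kQ_1 e_{t(e)}\otimes_k e_{s(e)}kQ_2$ to $\bigl(e_{11}M_{\alpha_1}(\Lambda_1)\,f_{\Sigma_1}(e_{t(e)})\bigr)\otimes_k\bigl(f_{\Sigma_2}(e_{s(e)})\,M_{\alpha_2}(\Lambda_2)\,e_{11}\bigr)$; normalising $f_{\Sigma_i}(e_v)$ to a diagonal idempotent of rank $\alpha_i(v)$ by Lemma~\ref{Lemma18}, the first factor is free of rank $\alpha_1(t(e))$ over $\Lambda_1$ on the left and the second free of rank $\alpha_2(s(e))$ over $\Lambda_2$ on the right, so the summand becomes $(\Lambda_1\otimes_k\Lambda_2)^{\oplus\,\alpha_1(t(e))\,\alpha_2(s(e))}$, supported between the two factors of $\Lambda$; the case $s(e)\in(Q_1)_0$, $t(e)\in(Q_2)_0$ is symmetric. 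Summing over $C$, $\varepsilon\,\mathcal B^\Gamma\,\varepsilon$ is a direct sum of rank-one free $\Lambda$-bimodules, $\alpha_1(t(e))\,\alpha_2(s(e))$ of them for each $e$ from the $Q_2$-side to the $Q_1$-side and $\alpha_2(t(e))\,\alpha_1(s(e))$ for each $e$ the other way --- which, up to the bookkeeping conventions for path algebras, is exactly the bimodule of arrows of $Q'$ over its vertex algebra $\Lambda$ (a bouquet of $n_1$ loops at $v_1$ and of $n_2$ loops at $v_2$). Hence $T_\Lambda(\varepsilon\,\mathcal B^\Gamma\,\varepsilon)\cong kQ'$ and $kQ_\Sigma\sim kQ'$.

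The formal skeleton --- the localisation-commutes-with-tensor-algebra fact and the corner-of-a-tensor-algebra identity --- is routine once the relevant universal properties are spelled out. The part I expect to be the main obstacle is the bimodule bookkeeping in step (2): tracking which ``row'' and which ``column'' modules land on which side of $\otimes_k$, and verifying that the multiplicities genuinely come out as $\alpha_1(t(e))\alpha_2(s(e))$ and $\alpha_2(t(e))\alpha_1(s(e))$ and in the correct directions, which is precisely where the dimension-vector weighting in the description of $Q'$ is forced. Logically prior to this, one must also check that the hypothesis ``$\Sigma$ corresponds to $\Sigma_1$ and $\Sigma_2$'' is literally the base change of $\Sigma_1\sqcup\Sigma_2$ along $R\to kQ$ through maps of finitely generated projective modules, so that Theorem~\ref{Theorem10} and the step-(1) fact are legitimately applicable.
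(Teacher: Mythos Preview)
Your approach is correct and genuinely different from the paper's. The paper argues at the level of module categories: it uses Lemma~\ref{Lemma16} to identify $kQ_\Sigma/\langle I_j\rangle$ with $(kQ_j)_{\Sigma_j}$, then takes an arbitrary $kQ_\Sigma$-module $M$, observes that its restriction to each $kQ_j$ must be a direct sum of copies of the unique simple over $M_{\alpha_j}(\Lambda_j)$, and from this reads off a representation of $Q'$; it then defines a functor $F:\mathsf{Mod}\text{-}kQ_\Sigma\to\mathsf{Mod}\text{-}kQ'$ by hand and asserts that it is fully faithful and essentially surjective. Your proof, by contrast, computes $kQ_\Sigma$ directly as an algebra: the key new ingredient is the general identity $\bigl(T_R(\mathcal B)\bigr)_{\Sigma\otimes_R T_R(\mathcal B)}\cong T_{R_\Sigma}(R_\Sigma\otimes_R\mathcal B\otimes_R R_\Sigma)$, which does not appear in the paper, together with the Morita reduction $\varepsilon T_\Gamma(\mathcal B^\Gamma)\varepsilon\cong T_{\varepsilon\Gamma\varepsilon}(\varepsilon\mathcal B^\Gamma\varepsilon)$ for a full degree-zero idempotent.

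What each approach buys: the paper's argument makes the equivalence of representations completely explicit and requires essentially no abstract input beyond Lemma~\ref{Lemma16} and the characterisation of $A_\Sigma$-modules in Theorem~\ref{Theorem10}, but it leaves the verification that $F$ is an equivalence to the reader and does not identify $kQ_\Sigma$ as a ring. Your argument yields a stronger conclusion --- an explicit Morita equivalence at the ring level via the idempotent $\varepsilon=(e_{11},e_{11})$ --- and isolates two reusable lemmas (localisation commutes with tensor algebras; corners of tensor algebras are tensor algebras of corners) that would apply well beyond the path-algebra setting. Both routes ultimately hinge on the same bimodule count, which in your version appears as the computation $e_{11}M_{\alpha_1}(\Lambda_1)f_{\Sigma_1}(e_{t(e)})\otimes_k f_{\Sigma_2}(e_{s(e)})M_{\alpha_2}(\Lambda_2)e_{11}\cong(\Lambda_1\otimes_k\Lambda_2)^{\alpha_1(t(e))\alpha_2(s(e))}$ via Lemma~\ref{Lemma18}, and in the paper's version as the observation that $\varphi_e$ is an $\alpha_2(t(e))\times\alpha_1(s(e))$ matrix of maps $M_1\to M_2$. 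Your reading of $\Sigma$ as the base change of $\Sigma_1\sqcup\Sigma_2$ along $kQ_1\times kQ_2\to kQ$ is consistent with how the paper uses it, and your parenthetical claim that $R_\Sigma$ splits as a product is easily justified via Theorems~\ref{Theorem5} and~\ref{Theorem10}: an $R$-module $M=M_1\oplus M_2$ satisfies $\mathrm{Hom}_R(\sigma_i,M)=\mathrm{Hom}_{R_i}(\sigma_i,M_i)$, so the bireflective subcategories coincide.
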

\begin{proof}
Denote $I_1$ and $I_2$ the ideal generated by $v \in (Q_2)_0$ and $v \in (Q_1)_0$, respectively. We have that $kQ/I_j \cong kQ_j$ and that:
$$kQ_\Sigma/\langle I_j \rangle_{kQ_\Sigma} \cong (kQ_j)_{\Sigma_j} \cong M_{\alpha_j}(k\langle x_1, \dots, x_{n_j} \rangle)$$
for $j = 1, 2$.
Given a representation $M = (M_v, \varphi_e)_{v \in Q_0, e \in Q_1}$ of $Q$ over $k$ that is a restriction of module over $kQ_\Sigma$, we find that $(M_v, \varphi_e)_{v \in (Q_2)_0, e \in (Q_2)_1}$ is a restriction of a module over $(kQ_1)_{\Sigma_1}$ and, therefore, corresponds to a representation $(M_1, \lambda^M_1, \dots, \lambda^M_{n_1})$ of $k\langle x_1, \dots, x_{n_1}\rangle$. Analogously, $(M_v, \varphi_e)_{v \in (Q_2)_0, e \in (Q_2)_1}$ corresponds to  $(M_2, \mu^M_1, \dots, \mu^M_{n_2})$. Also, $M_v = (M_1)^{\alpha_1(v)}$ and $M_v = (M_2)^{\alpha_2(v)}$ for $v \in (Q_1)_0$ and $v \in (Q_2)_0$, respectively.

For $e \in Q'_1$ with source in $(Q_1)_0$ and target in $(Q_2)_0$, we have that $\varphi_e: M_1^{\alpha_1(s(e))} \to M_2^{\alpha_2(t(e))}$, which means it is determined by a $\alpha_2(t(e)) \times \alpha_1(s(e))$ matrix of maps from $M_1 \to M_2$, similarly for $e' \in Q'_1$ with source in $(Q_2)_0$. Therefore, $M$ corresponds to a representation $F(M)$ of $Q'$ given by $F(M)_{v_1} = M_1$, $F(M)_{v_2} = M_2$, the loops are given by $\lambda_1, \dots, \lambda_{n_1}$ and $\mu_1, \dots, \mu_{n_1}$ and other arrows are given by maps from $M_1$ to $M_2$ or vice versa that form components of respective $e \in Q'_1$ as discussed above.

By considering the components pertaining to $Q_1$ and $Q_2$, we obtain that any possible homomorphism $f$ between $M$ and $N$ has the following form $f = ((f_1 I_{\alpha_1(v)})_{v \in (Q_1)_0},$ $(f_2 I_{\alpha_2(v)})_{v \in (Q_2)_0})$ such that $f_1$ commutes with all $\lambda$ and $f_2$ commutes with all $\mu$. We also have that $f_2 I_{\alpha_2(t(e))} \cdot \varphi^M_e = \varphi^N_e \cdot f_1 I_{\alpha_1(s(e))}$ for every $e' \in Q'_1$ with source in $(Q_1)_0$ and target in $(Q_2)_0$. Similarly, it holds for $e' \in Q'_1$ from a vertex in $(Q_2)_0$ and to a vertex in $(Q_1)_0$. This gives us that $f$ commutes with $M_1$ to $M_2$ or vice versa representing arrows in $Q'$. We set $F(f)_{v_1} = f_1$ and $F(f)_{v_2} = f_2$.

We have constructed a functor $F$ from $\mathsf{Mod}-kQ_\Sigma$ to $\mathsf{Mod}-kQ'$. It is easy to show that the functor $F$ is well-defined, fully faithful and essentially surjective.
\end{proof}

\section*{Acknowledgments}
This text was prepared as in the course of the author's work on a doctoral thesis at Faculty of Mathematics and Physics, Charles University advised by Jan \v{S}\v{t}ov\'{i}\v{c}ek, to whom the author wishes to thank for his help and support.

The author was partially supported from grant \textit{GA\v{C}R 17-23112S} of the Czech Science Foundation.

\renewcommand{\bibname}{References}
\bibliographystyle{alpha}
\bibliography{main}

~\\[0.05cm]
\noindent
\textsc{Charles University, Faculty of Mathematics and Physics, Department of Algebra, Sokolovska 83, 186 75 Praha, Czech Republic}\\\\
\textit{Email address:} \texttt{jakub.kopriva@outlook.com}

\end{document}